\definecolor{CardinalRed}{HTML}{C41E3A}
\definecolor{Dartmouth}{HTML}{00693E}
\colorlet{MyRed}{CardinalRed}
\colorlet{MyGreen}{Dartmouth}
\colorlet{MyBlue}{DodgerBlue}
\colorlet{MyViolet}{DarkMagenta}
\colorlet{MyLightRed}{MyRed!25}
\colorlet{MyLightGreen}{MyGreen!25}
\colorlet{MyLightBlue}{MyBlue!25}
\colorlet{PrimalColor}{MyBlue}
\colorlet{PrimalFill}{PrimalColor!25}
\colorlet{DualColor}{MyRed}
\colorlet{AlertColor}{MyRed}	
\colorlet{BadColor}{MyRed}	
\colorlet{GoodColor}{MyGreen}	
\colorlet{LinkColor}{MediumBlue}	
\colorlet{RevColor}{black}	
\colorlet{MacroColor}{black}
\titlespacing{\paragraph}{0em}{\medskipamount}{1em}
\titlespacing{\subparagraph}{0em}{0em}{0.5em}
\setlist[1]{topsep=\smallskipamount,itemsep=\smallskipamount,left=\parindent}
\setlist[2]{left=0pt}
\crefname{algo}{Algorithm}{Algorithms}
\crefname{assumption}{Assumption}{Assumptions}
\crefname{case}{Case}{Cases}
	\def\ltx@label#1{\cref@label{#1}}	
	\def\label@in@display@noarg#1{\cref@old@label@in@display{#1}}	
\theoremstyle{plain}
\newtheorem{theorem}{Theorem}	
\newtheorem{corollary}{Corollary}	
\newtheorem{lemma}{Lemma}	
\newtheorem{proposition}{Proposition}	
\newtheorem*{theorem*}{Theorem}	
\newtheorem*{corollary*}{Corollary}	
\theoremstyle{definition}
\newtheorem{definition}{Definition}	
\newtheorem{assumption}{Assumption}	
\newtheorem*{definition*}{Definition}	
\newtheorem*{assumption*}{Assumptions}	
\newtheorem*{example*}{Example}	
\theoremstyle{remark}
\newtheorem{remark}{Remark}	
\newtheorem*{remark*}{Remark}	
\newtheorem*{notation*}{Notation}	
\newcounter{proofpart}
\newcommand{\draft}[1]{{\color{MacroColor}#1}}	
\newcommand{\revise}[1]{#1}	
\newcommand{\newmacro}[2]{\newcommand{#1}{\draft{#2}}}	
\newcommand{\newop}[2]{\DeclareMathOperator{#1}{\draft{#2}}}	
\DeclarePairedDelimiter{\abs}{\lvert}{\rvert}	
\DeclarePairedDelimiterX{\setdef}[2]{\{}{\}}{#1:#2}	
\DeclarePairedDelimiterXPP{\exclude}[1]{\mathopen{}\setminus}{\{}{\}}{}{#1}
\newcommand{\R}{\mathbb{R}}	
\DeclareMathOperator*{\argmax}{arg\,max}	
\DeclarePairedDelimiterXPP{\bigof}[1]{\mathcal{O}}{(}{)}{}{#1}	
\DeclareMathOperator{\crit}{crit}	
\DeclareMathOperator{\dist}{dist}	
\DeclareMathOperator{\one}{\mathds{1}}	
\DeclareMathOperator{\relint}{ri}	
\newcommand{\eg}{e.g.,\xspace}	
\newcommand{\ie}{i.e.,\xspace}	
\newcommand{\alt}[1]{#1'}	
\newcommand{\altalt}[1]{#1''}	
\newmacro{\dd}{\:d}	
\newcommand{\eps}{\varepsilon}	
\newmacro{\const}{c}	
\newmacro{\Const}{\rho}	
\newmacro{\coefalt}{\mu}	
\NewDocumentCommand{\coef}{O{\lambda}}{\draft{#1}}
\newmacro{\param}{\theta}	
\newmacro{\params}{\Theta}	
\newmacro{\pexp}{p}	
\newmacro{\qexp}{q}	
\newmacro{\rexp}{r}	
\newmacro{\radius}{r}
\newmacro{\beforestart}{0}	
\newmacro{\start}{1}	
\newmacro{\afterstart}{2}	
\newmacro{\running}{\start,\afterstart,\dotsc}	
\newmacro{\halfrunning}{1,3/2,2\dotsc}	
\newmacro{\run}{t}	
\newmacro{\runalt}{s}	
\newmacro{\runaltalt}{\tau}	
\newmacro{\nRuns}{T}	
\newmacro{\runs}{\mathcal{\nRuns}}	
\newmacro{\state}{x}	
\newmacro{\statealt}{y}	
\newmacro{\statealtalt}{z}	
\newmacro{\tstart}{0}	
\newmacro{\timealt}{s}	
\newmacro{\horizon}{T}	
\newmacro{\pos}{\point}	
\newmacro{\posalt}{y}	
\newmacro{\flowmap}{\Theta}	
\DeclarePairedDelimiterXPP{\flowof}[2]{\flowmap_{#1}}{(}{)}{}{#2}	
\newop{\Nash}{NE}	
\newop{\CE}{CE}	
\newop{\CCE}{CCE}	
\newop{\NI}{NI}	
\newop{\brep}{br}	
\newop{\reg}{Reg}	
\newop{\preg}{\overline{Reg}}	
\newop{\val}{val}	
\newcommand{\eq}{\sol}	
\newmacro{\play}{i}	
\newmacro{\playalt}{j}	
\newmacro{\playaltlalt}{k}	
\newmacro{\nPlayers}{N}	
\newmacro{\players}{\mathcal{\nPlayers}}	
\newmacro{\pure}{\alpha}	
\newmacro{\purealt}{\beta}	
\newmacro{\purealtalt}{\gamma}	
\newmacro{\nPures}{A}	
\newmacro{\pures}{\mathcal{\nPures}}	
\newmacro{\loss}{\ell}	
\newmacro{\pay}{u}	
\newmacro{\payv}{v}	
\newmacro{\pot}{f}	
\newmacro{\game}{\mathcal{G}}	
\newmacro{\gamefull}{\game(\players,\points,\pay)}	
\newmacro{\fingame}{\Gamma}	
\newmacro{\fingamefull}{\Gamma(\players,\pures,\pay)}	
\newmacro{\gmat}{g}	
\newmacro{\gdist}{\dist_{\gmat}}
\newmacro{\mfld}{M}	
\newmacro{\form}{\omega}	
\newmacro{\tvec}{z}	
\newmacro{\uvec}{u}	
\newmacro{\ball}{\mathbb{B}}	
\newmacro{\sphere}{\mathbb{S}}	
\newmacro{\graph}{\mathcal{G}}
\newmacro{\vertices}{\mathcal{V}}
\newmacro{\edges}{\mathcal{E}}
\newmacro{\mat}{A}	
\newmacro{\matalt}{c}	
\newmacro{\hmat}{H}	
\newop{\row}{row}	
\newop{\col}{col}	
\newmacro{\ones}{\mathbf{1}}	
\newmacro{\eye}{I}	
\newmacro{\zer}{\mathbf{0}}	
\DeclarePairedDelimiter{\norm}{\lVert}{\rVert}	
\DeclarePairedDelimiterXPP{\dnorm}[1]{}{\lVert}{\rVert}{_{\ast}}{#1}	
\DeclarePairedDelimiterXPP{\onenorm}[1]{}{\lVert}{\rVert}{_{1}}{#1}	
\DeclarePairedDelimiterXPP{\twonorm}[1]{}{\lVert}{\rVert}{_{2}}{#1}	
\DeclarePairedDelimiterXPP{\supnorm}[1]{}{\lVert}{\rVert}{_{\infty}}{#1}	
\DeclarePairedDelimiterX{\braket}[2]{\langle}{\rangle}{#1,#2}	
\DeclarePairedDelimiterX{\inner}[2]{\langle}{\rangle}{#1,#2}	
\newmacro{\vecspace}{\mathcal{V}}	
\newmacro{\subspace}{\mathcal{W}}	
\newmacro{\coord}{i}	
\newmacro{\coordalt}{j}	
\newmacro{\coordaltalt}{k}	
\newmacro{\nCoords}{n}	
\newmacro{\dims}{\nCoords}	
\newmacro{\vdim}{\nCoords}	
\newmacro{\pvec}{z}	
\newmacro{\pvecalt}{r}	
\newmacro{\bvec}{e}	
\newmacro{\bvecs}{\mathcal{E}}	
\newmacro{\cvec}{b}     
\newmacro{\cvecalt}{d}     
\newmacro{\pspace}{\mathcal{V}}	
\newmacro{\dspace}{\pspace^{\ast}}	
\newmacro{\dvec}{\dpoint}	
\newmacro{\dbvec}{\eps}	
\newmacro{\dpoint}{y}	
\newmacro{\dpointalt}{\alt\dpoint}	
\newmacro{\dpointaltalt}{\altalt\dpoint}	
\newmacro{\dpoints}{\mathcal{Y}}	
\newmacro{\dstate}{Y}	
\newmacro{\dbase}{v}	
\newcommand{\from}{\colon}	
\newop{\Opt}{Opt}	
\newop{\Sol}{Sol}	
\newop{\gap}{Gap}	
\newop{\orcl}{Or}	
\newmacro{\tfun}{f}	
\newmacro{\obj}{F}	
\newmacro{\objalt}{g}	
\newmacro{\sobj}{F}	
\newmacro{\oper}{A}	
\newmacro{\vecfield}{g}	
\newcommand{\sol}[1][\point]{#1^{\ast}}	
\newmacro{\solvec}{\vecfield(\sol)}	
\newmacro{\solpay}{\eq[\payv]}	
\newmacro{\signal}{g}	
\newmacro{\step}{\gamma}	
\newmacro{\learn}{\eta}	
\newmacro{\vbound}{G}	
\newmacro{\lips}{L}	
\newmacro{\strong}{\mu}	
\newmacro{\smooth}{\beta}	
\newop{\cone}{cone}
\newop{\tspace}{T}	
\newop{\tcone}{TC}	
\newop{\dcone}{DC}	
\newop{\ncone}{NC}	
\newop{\pcone}{PC}	
\newop{\hull}{\Delta}	
\newmacro{\cvx}{\mathcal{C}}	
\newmacro{\subd}{\partial}	
\newmacro{\minmax}{\mathcal{L}}	
\newmacro{\minvar}{\point_{1}}	
\newmacro{\minvaralt}{\alt\minvar}	
\newmacro{\minvars}{\points_{1}}	
\newmacro{\minsol}{\sol[\minvar]}	
\newmacro{\maxvar}{\point_{2}}	
\newmacro{\maxvaaltr}{\alt\maxvar}	
\newmacro{\maxvars}{\points_{2}}	
\newmacro{\maxsol}{\sol[\maxvar]}	
\newop{\Eucl}{\Pi}	
\newop{\logit}{\Lambda}	
\newop{\dkl}{KL}	
\newmacro{\hreg}{h}	
\newmacro{\hconj}{\hreg^{\ast}}	
\newmacro{\breg}{D}	
\newmacro{\mprox}{P}	
\newmacro{\mirror}{Q}	
\newmacro{\fench}{F}	
\newmacro{\hstr}{\revise{\kappa}}	
\newmacro{\depth}{H}	
\newmacro{\proxdom}{\points_{\hreg}}	
\newmacro{\zone}{\mathcal{D}}	
\newmacro{\hker}{\theta} 
\DeclarePairedDelimiterXPP{\proxof}[2]{\mprox_{#1}}{(}{)}{}{#2}	
\newmacro{\point}{x}	
\newmacro{\pointalt}{\alt\point}	
\newmacro{\pointaltalt}{\altalt\point}	
\newmacro{\points}{\hilbert}	
\newmacro{\intpoints}{\relint\points}	
\newmacro{\base}{p}	
\newmacro{\basealt}{q}	
\newmacro{\basealtalt}{u}	
\newmacro{\open}{\mathcal{U}}	
\newmacro{\closed}{\mathcal{C}}	
\newmacro{\cpt}{\mathcal{K}}	
\newmacro{\nhd}{\mathcal{U}}	
\newmacro{\nhdalt}{\nhd}	
\newop{\ex}{\mathbb{E}}	
\newop{\prob}{\mathbb{P}}	
\newop{\Var}{Var}	
\newop{\simplex}{\hull}	
\DeclarePairedDelimiterXPP{\exof}[1]{\ex}{[}{]}{}{
 #1}
\DeclarePairedDelimiterXPP{\exwrt}[2]{\ex_{#1}}{[}{]}{}{
 #2}
\DeclarePairedDelimiterXPP{\probof}[1]{\prob}{(}{)}{}{
 #1}
\DeclarePairedDelimiterXPP{\oneof}[1]{\one}{\{}{\}}{}{
 #1}
\newmacro{\sample}{\omega}	
\newmacro{\samples}{\Omega}	
\newmacro{\filter}{\mathcal{F}}	
\newmacro{\probspace}{(\samples,\filter,\prob)}	
\newmacro{\event}{E}       
\newmacro{\eventalt}{H}       
\newmacro{\mean}{\mu}	
\newmacro{\sdev}{\sigma}	
\newmacro{\variance}{\sdev^{2}}	
\newmacro{\proper}{\tau}	
\newmacro{\error}{Z}	
\newmacro{\noise}{U}	
\newmacro{\bias}{b}	
\newmacro{\brown}{W}	
\newmacro{\serror}{\theta}	
\newmacro{\snoise}{\xi}	
\newmacro{\sbias}{\psi}	
\newmacro{\sbound}{M}	
\newmacro{\bbound}{B}	
\newmacro{\noisepar}{\sdev}	
\newmacro{\noisevar}{\variance}	
\newmacro{\hilbert}{\mathcal{H}}
\newmacro{\Lb}{\mathcal{L}}
\newcommand{\Bl}[1]{\ball\mathopen{}\left(#1\right)}
\newcommand{\petito}[1]{o\mathopen{}\left(#1\right)}
\newcommand{\Cb}{\mathcal{C}}
\newcommand{\Fb}{\mathcal{F}}
\newcommand{\Hb}{\mathcal{H}}
\newcommand{\Ib}{\mathcal{I}}
\newcommand{\Jb}{\mathcal{J}}
\newcommand{\Rb}{\mathcal{R}}
\newcommand{\VM}[1]{}
\newcommand{\PM}[1]{}
\newmacro{\klexp}{q}
\newmacro{\auxconst}{Q}
\begin{document}


\title
[Non-convex heavy-ball dynamics with Hessian-driven damping]
{Heavy-Ball Dynamics with Hessian-Driven Damping for\\
Non-Convex Optimization under the Łojasiewicz Condition}	

\author
[V.~Apidopoulos]
{Vassilis Apidopoulos$^{c,\ast}$}
\address{$^{c}$\,%
Corresponding author.}
\address{$^{\ast}$\,%
Archimedes, Athena Research Center, 1 Artemidos street, Athens, 15125, Greece.}
\email{vassilis.apid@gmail.com}
\author
[V.~Mavrogeorgou]
{Vasiliki Mavrogeorgou$^{\diamond,\ast}$}
\address{$^{\diamond}$\,%
Department of Mathematics, National \& Kapodistrian University of Athens, Athens, Greece.}
\email{vasiliki.mavrogeorgou@gmail.com}
\author
[T.~G.~Tsironis]
{Theodoros G.~Tsironis$^{\sharp,\ast}$}
\address{$^{\sharp}$\,%
Department of Physics, National \& Kapodistrian University of Athens, Athens, Greece.}
\email{tgtsironis@phys.uoa.gr}


\subjclass[2020]{Primary 90C26, 34D05, 46N10; secondary 65K05, 65B99.}
\keywords{%
Non-convex optimization;
inertial methods;
Hessian-driven damping;
Łojasiewicz condition;
convergence rate.}

\newacro{LHS}{left-hand side}
\newacro{RHS}{right-hand side}
\newacro{iid}[i.i.d.]{independent and identically distributed}
\newacro{lsc}[l.s.c.]{lower semi-continuous}

\newacro{VI}{variational inequality}
\newacroplural{VI}[VIs]{variational inequalities}

\newacro{DIN}{dynamical inertial Newton-like}
\newacro{HBF}{heavy-ball with friction}

\maketitle
\begin{abstract}
%
%
In this paper, we examine the convergence properties of heavy-ball
\PM{``heavy-ball'' or ``heavy ball''?}
dynamics with Hessian-driven damping in smooth non-convex optimization problems satisfying a \L ojasiewicz condition.
In this general setting, we provide a series of tight, worst-case optimal convergence rate guarantees as a function of the dynamics' friction coefficients and the \L ojasiewicz exponent of the problem's objective function.
Importantly, the linear rates that we obtain improve on previous available rates and they suggest a different tuning of the dynamics' damping terms, even in the strongly convex regime.
We complement our analysis with a range of stability estimates in the presence of perturbation errors and inexact gradient input, as well as an avoidance result showing that the dynamics under study avoid strict saddle points from almost every initial condition.
\end{abstract}

\allowdisplaybreaks	
\acresetall	
\acused{iid}
\acused{LHS}
\acused{RHS}
\maketitle

\section{Introduction}
\label{sec:introduction}

Consider the  minimization problem
\begin{equation}
\label{eq:opt}
\tag{Opt}
\begin{aligned}
\operatorname*{minimize}_{x\in\Hb}
	&\quad
	F(x)
\end{aligned}
\end{equation}
where $\obj\from\Hb\to\R$ is a $C^{2}$-smooth (not necessarily convex) function on some Hilbert space $\Hb$.
Our aim in this paper is to study the asymptotic minimization and convergence properties of the dynamical inertial Newton-like \eqref{eq:DIN} system
\begin{equation}
\label{eq:DIN}
\tag{DIN}
    \ddot{x}(t) +\alpha\dot{x}(t) +\beta\nabla^{2}F(x(t))\dot{x}(t) +\nabla F(x(t)) =0 
\end{equation}
where
$\alpha,\beta\geq0$ are the system's \emph{friction coefficients} \textendash\ \emph{inertial} and \emph{Hessian} respectively. System \eqref{eq:DIN} was introduced and studied in \cite{alvarez2002second} and its origins can be traced back to the seminal work of Polyak \cite{polyak1964some} (see also \cite{antipin1994minimization,alvarez2000minimizing,attouch2000heavy}) for the case $\beta = 0$ \textendash\ in which case \eqref{eq:DIN} is typically referred to as the \acf{HBF} dynamics:
\begin{equation}\label{HBode}\tag{HBF}
	\ddot{x}(t)+\alpha\dot{x}(t)+\nabla F(x(t)) = 0
\end{equation}

The key feature of system \eqref{eq:DIN} is the incorporation of the Hessian-driven friction term $\nabla^{2}F(x(t))\dot{x}(t)$, to the classical heavy ball with friction dynamics \eqref{HBode} (\cite{polyak1964some,antipin1994minimization,alvarez2000minimizing,attouch2000heavy}).
The consideration of such Hessian damping term makes the generated trajectory more adaptive to the local curvature of the objective function $F$ and mitigates the oscillations that may appear in the behavior of the heavy ball with friction trajectory in \eqref{HBode}, which may have undesirable effects from an optimization objective.

One of the major interests of studying inertial systems like \eqref{eq:DIN} is their accelerated convergence properties (e.g. in terms of speed of convergence of $F(x(t))$ to a critical value $\bar{F}$), with respect to first-order in time systems like the standard \textit{gradient flow}: 
\begin{equation}\label{GF}\tag{GF}
	\dot{x}(t) + \nabla F(x(t))=0.
\end{equation}
by properly tuning the friction parameters $\alpha$ and $\beta$.




In general, the quantitative properties of systems like \eqref{eq:DIN} and more precisely the question concerning their rates of convergence  (and thus their acceleration properties) strongly depends on i) the precise optimization framework (i.e. the assumptions made on the geometry/regularity $F$) and ii) the adequate tuning of the friction parameters $\alpha$ and $\beta$, in order to achieve the best possible convergence results.  In fact, the question of acceleration for the system \eqref{eq:DIN} beyond the (strongly) convex realm, under milder assumptions such as the \emph{\L ojasiewicz condition} or \emph{Kurdyka-\L ojasiewicz condition} \cite{polyak1963gradient,Loja63,Kurdyka1998} still remains unclear and is the main subject of this work.



\paragraph{Contributions}

In this work, we aim to address the aforementioned issue for the system \eqref{eq:DIN}. In particular, we study the convergence properties of \eqref{eq:DIN} in a non-convex setting (when $F$ is not necessarily convex), under the \L ojasiewicz condition \cite{Loja63,polyak1964some} (see Assumption \ref{definition PL} in Section \ref{sec:prelims}).

More precisely, when $F$ satisfies the $\mu$-\L ojasiewicz condition of order $2$, we provide explicit exponential rates of convergence for $F(x(t))$ to a critical value $\bar{F}$ and for the decay of $\nabla F(x(t))$ to zero, as a function of the friction parameters $\alpha$ and $\beta$. As a by-product, we provide an explicit tuning for the friction parameters, leading to an optimal worst-case rate of order $\bigof{e^{-2\sqrt{\mu}t}}$, which is improved with respect to previous related works \cite{alvarez2002second,attouch2022first,castera2021inertial,maulen2024inertial,wang2025accelerated} for the system \eqref{eq:DIN}. Our theoretical improvements are also witnessed in some simple numerical illustrations.

We additionally provide stability estimates for the trajectory of \eqref{eq:DIN} in the presence of an external perturbation error, that depend both on the local landscape of the objective function $F$, around its critical points (i.e. the \L ojasiewicz exponent of $F$) and the tuning of the parameters $\alpha$, $\beta$. Such stability results present some interest on their own when it comes to the consideration of stochastic versions of \eqref{eq:DIN} where the computation of $\nabla F$ (or $\nabla^{2}F$) is not (or hardly) accessible.

Furthermore, we complement the analysis by establishing an almost-surely avoidance of strict saddle points result, for functions satisfying the \L ojasiewicz condition, further generalizing upon existing works \cite{castera2023inertial} (see also \cite{maulen2024inertial}) for functions with isolated critical points. 
Finally, we extend the above-mentioned results to the first-order in time coupled system \eqref{HBHDfirstorder}, which is related to \eqref{eq:DIN} (see below in paragraph \ref{subsection: DIN}), paving the way to the consideration of more generalized settings, where the objective function $F$ is not necessarily smooth or even differentiable.

\subsection{Related work}

\subsubsection{Heavy-ball with friction}
System \eqref{HBode} was firstly introduced and studied in the seminal work of Polyak \cite{polyak1964some}, showing its advantages with respect to the gradient flow dynamics \eqref{GF}. 
In particular, for twice differentiable and $\mu$-strongly convex functions the \eqref{HBode} system with $\alpha=2\sqrt{\mu}$ enjoys the worst-case optimal linear rate of convergence of order $\bigof{e^{-2\sqrt{\mu}t}}$, in terms of objective function values $F(x(t)) - \min F$, while the corresponding rate for \eqref{GF} is $\bigof{e^{-2\mu t}}$. For the majority of practical functions, it is most likely that the strong convexity parameter is very small (\ie $\mu \ll 1$), making \eqref{HBode} much faster than \eqref{GF}. In this viewpoint the order $\bigof{e^{-c\sqrt{\mu}t}}$, for some constant $c\leq 2$ is usually identified as an accelerated linear rate. Beyond the (strongly) convex setting, several studies have been devoted to the convergence properties of \eqref{HBode}, including i) almost sure convergence to local minima for (non convex) Morse functions \cite{attouch2000heavy,goudou2009gradient}  and ii) convergence rates for the heavy ball trajectories under various relaxations of strong convexity, such as quasi-strong convexity \cite{siegel2019accelerated,aujol2022convergencequasi,aujol2024heavy,aujol2023convergencePL} and  \L ojasiewicz conditioning (see Assumption \hyperref[PL]{$\Lb(2)$}) \cite{haraux1998convergence,begout2015damped,polyak2017lyapunov,apidopoulos2022convergence}. However, it is worth mentioning that, to the best of the authors knowledge, the worst-case optimal rate $\bigof{e^{-2\sqrt{\mu}t}}$, obtained by \eqref{HBode} for strongly convex functions, has not been established for any of the aforementioned relaxations (see also the related discussion in Remark \ref{remark comparison} in Section \ref{sec:results}).

\subsubsection{Dynamic Inertial Newton-like system}\label{subsection: DIN}
While the heavy ball system \eqref{HBode} enjoys some nice convergence properties, its trajectory often presents oscillatory behavior, which may cause numerical instabilities and affect the overall efficiency of the method.
In order to overcome this issue, in \cite{alvarez2002second} the authors proposed the system \eqref{eq:DIN}, which incorporates to the plain heavy ball system \eqref{HBode}, an additional geometric damping term that captures the local curvature of the function (Hessian driven damping).

A seemingly inconvenience of \eqref{eq:DIN} is its second order character in space (due to the presence of the Hessian term $\nabla^{2}F(x(t))$), which may seem restricting for functions that are not necessarily twice differentiable.
However, one of the key features concerning system \eqref{eq:DIN}, is that it is strongly related with a first-order coupled system (in time and space) with no occurrence of the Hessian, that reads as follows:
\begin{equation}
	\begin{cases}{}\label{HBHDfirstorder1}\tag{g-DIN}
		\dot{x}(t) +\beta\nabla F(x(t)) + \left(\alpha-\frac{1}{\beta}\right)x(t) + \frac{1}{\beta}y(t) = 0 \\
		\dot{y}(t) + \left(\alpha-\frac{1}{\beta}\right)x(t) + \frac{1}{\beta}y(t) = 0 \\
		x(0)=x_{0} ~ , \quad y(0)= (1-\alpha\beta)x_{0}  - \beta^{2}\nabla F(x_{0})
	\end{cases}
\end{equation}
In particular, as it was shown in \cite[Theorem $6.1$]{alvarez2002second}, systems \eqref{eq:DIN} and \eqref{HBHDfirstorder1} are equivalent (under an additional regularity assumption on $y(\cdot)$).
System \eqref{HBHDfirstorder1} does not employ any second order information and thus is well defined for a broader class of functions, without requiring second-order differentiability of $F$.
Going even further system \eqref{HBHDfirstorder1} can be generalized for non-differentiable convex functions involving the subdifferential instead of the gradient, or even to non-convex settings by considering the Clarke's generalized subdifferential (see \cite[Definition $10.3$]{clarke2013functional}), see \eg \cite{attouch2012second,castera2021inertial}.

In \cite{alvarez2002second} the authors established some basic results concerning systems \eqref{eq:DIN} and \eqref{HBHDfirstorder1} (and its generalized version involving a non-differential, convex objective function), including existence of solutions, (weak) convergence of trajectories to a critical point when $F$ is (either convex, or) analytic, as also the exact equivalence relation between \eqref{eq:DIN} and \eqref{HBHDfirstorder1}.

Furthermore, in the recent works \cite{castera2021inertial} (see also \cite{maulen2024inertial}) the authors study the generalized version of system \eqref{HBHDfirstorder1} for possibly non-convex, non-differentiable, Lipschitz and tame functions.
Under the $\mu$-\L ojasiewicz conditioning of order $2$, they provide an abstract rate of convergence for $F(x(t))$, to a critical value $\bar{F}$ of order $\bigof{e^{-c\mu t}}$, for some $c\leq 2$ and thus does not constitute an accelerated rate (see \cite[Theorem $13$]{castera2023inertial}). Under the additional assumption of strong convexity for the objective function $F$, the authors in \cite{attouch2022first} derive a rate of convergence of order $\bigof{e^{-\sqrt{\frac{\mu}{2}}t}}$ for a particular choice of $\alpha$ and $\beta$, which however is not tight (see also Remark \ref{rem:optimality rates} in Section \ref{sec:results}). In addition, in the same (strongly convex) setting, the aforementioned rates are proven to be robust with respect to exogenous additive errors to the gradient of $F$ in \cite{attouch2021effect}. In a  recent preprint \cite{wang2025accelerated}, the authors manage to improve the rate to a  $\bigof{e^{-2\sqrt{\mu}t}}$ in the strongly convex setting, which however is proven for a rescaled version of \eqref{eq:DIN}. 
Our findings further generalize and improve upon these works by i) considering a much broader setting without requiring (strong) convexity, but only the $\mu$-\L ojasiewicz condition of order $2$ and ii) providing explicit rate of convergence and stability estimates depending on the parameters $\alpha$ and $\beta$, which is also optimal for an adequate choice of these parameters.  

 Finally in \cite{castera2023inertial} (see also \cite{maulen2024inertial}) the authors provide a result showing that the trajectory generated by \eqref{HBHDfirstorder1} almost surely avoids the set of strict saddle point of Morse functions (see \cite[Theorem \& Corollary $3.1$]{castera2024continuous} and the associated discussion). We extend these results beyond the Morse setting to also account for the avoidance of connected components of saddle points, which are often a feature of relevant objective functions, stemming, for example, from the symmetries of the optimization task \cite{simsek2021geometry}.

\subsection{Organization}
The rest of the paper is organized as follows. In Section \ref{sec:prelims}, we set up the main assumptions and we recall some preliminary results regarding system \eqref{eq:DIN}. In Section \ref{sec:results}, we provide the main results of the paper. Section \ref{sec:analysis} contains the related convergence analysis and all the associated proofs.
In Section \ref{sec:numerics}, we present some numerical experiments, supporting our theoretical findings. Finally the proofs of all the auxiliary results can be found in Appendices \ref{app:aux}, \ref{app:rates} and \ref{app:avoidance}.

\section{Preliminaries}
\label{sec:prelims}

In this section we give some basic definitions and tools that will be used throughout this work. We recall that we are interested in the following optimization problem
\begin{equation}\label{minimization problem}
    \underset{x\in\Hb}{\min}F(x)
\end{equation}
where $F:\Hb\to\R$ denotes the objective function that we want to minimize and $\Hb$ is a Hilbert space endowed with a scalar product $\inner{\cdot}{\cdot}$ and its associated norm $\norm{\cdot}=\sqrt{\inner{\cdot}{\cdot}}$.

\subsection{Main assumptions}
\begin{assumption}
\label{asm:obj}
\label{assumption basic F}
We make the following assumptions on the function $F:\Hb\to\R$ :
\begin{enumerate}
\item The function $F:\Hb\to\R$ is twice continuously differentiable.
\item $F$ is bounded from below and $\crit{F}=\left\{x\in \Hb~:~\nabla F(x)=0\right\}\neq \emptyset$. 
\end{enumerate}
\end{assumption}

Note that  Assumption \ref{assumption basic F} guarantees the existence of a solution to the Cauchy initial value problem \eqref{eq:DIN} with some initial conditions for $x(0)$ and $\dot{x}(0)$, which may not be unique (by Peano's existence Theorem, see e.g. \cite{teschl2012ordinary}). By further assuming that $\nabla^{2}F$ is locally Lipschitz, one can assure the uniqueness of the solution. However, since we are only interested in the (asymptotic) convergence properties of a trajectory, we will not employ the Lipschitz character of the Hessian, unless otherwise stated.


Throughout the paper, we also assume that the function $F$ satisfies the \textit{\L ojasiewicz} condition, as defined below:
\begin{assumption}
\label{asm:KL}
\label{definition PL}
The function $F$ satisfies the \L ojasiewicz condition of order $q\in(1,2]$, i.e. for every $\bar{x}\in \crit{F}$, there exists $\Omega\subset \Hb$ and some positive constant $\mu>0$ such that:
\begin{equation}\label{PL}
(\forall x\in \Omega) \quad : \qquad \abs{F(x)-F(\bar{x})} \leq \frac{1}{2\mu}\norm{\nabla F(x)}^{q} \tag{$\Lb(q)$}
\end{equation}
We refer to \ref{PL} as \textit{global} if $\Omega =\Hb$ and as \textit{local} if there exist $\delta>0$ and $r>0$ such that $\Omega = \Bl{\bar{x},\delta}\cap[F < r]$.
\end{assumption}


The \L ojasiewicz condition \ref{PL} was first introduced in \cite{Loja63} (see also \cite{polyak1964some,Loja93}), as a property of analytic functions and was later extended in \cite{Kurdyka1998} to definable functions in o-minimal sets - also known as \emph{Kurdyka-\L ojasiewicz} condition - as also to non-smooth functions \cite{bolte2007lojasiewicz,bolte2007clarke,bolte2010characterizations}.
Initially, it was used as a key condition to guarantee convergence of the trajectory for classical dynamical systems (such as the gradient flow \cite{Loja63}) and it was later explored further in the optimization literature both in convex \cite{garrigos2017convergence,bolte2017error,bolte2007lojasiewicz,bolte2010characterizations,aujol2023convergencePL,aujol2023fast} and non-convex settings \cite{polyak1963gradient,polyak1964some,polyak2017lyapunov}, \cite{attouch2010proximal,attouch2013convergence,begout2015damped,Karimi2016,necoara2019linear,drusvyatskiy2018error,zhang2020new}. 

Condition \hyperref[PL]{$\Lb(2)$} is also frequently met in the related literature under the name \textit{Polyak-\L ojasiewicz} condition \cite{polyak1963gradient,polyak1964some,Karimi2016} (when $\Omega =\Hb$), as also \textit{gradient domination} (see e.g. \cite[paragraph $4.1.3$]{nesterov2013introductory}), or \textit{Kurdyka - \L ojasiewicz condition} \cite{Kurdyka1998,Bolte2006,bolte2007lojasiewicz,bolte2010characterizations}.  It is of particular interest, since it is closely related to the establishment of linear (geometric) convergence for continuous-in-time and discrete dynamics (algorithms) both in convex and non-convex settings \cite{polyak1963gradient,polyak1964some,Karimi2016,bolte2017error,garrigos2017convergence}.  In fact condition \hyperref[PL]{$\Lb(2)$} can be seen as a relaxation of the standard notion of the strong convexity property of a function. Note that unlike strong convexity, condition \hyperref[PL]{$\Lb(2)$} is a local condition and does not imply neither convexity nor uniqueness of minimizers. 
Over the last years a lot of effort has been made to deduce convergence results for functions that are not strongly convex, but rather satisfy a weaker geometric condition. Indeed, while strong convexity is a potent property that yields powerful convergence results for first-order methods (such as strong convergence and linear rates of convergence), in most of the situations, it is not met in practice. Among others, some of the relaxations of the strong convexity property, include the notion of \textit{quasi-strong convexity} (see e.g. \cite[Definition $1$]{necoara2019linear}), the \textit{restricted secant inequality} \cite[Definition $2$]{zhang2013gradient}, the \textit{error bound condition} \cite[Assumption A]{luo1993error}, the \textit{quadratic growth condition} \cite{penot1996conditioning,anitescu2000degenerate}. For a more detailed discussion of the above, we refer the reader to \cite{bolte2007lojasiewicz,bolte2010characterizations,bolte2017error}, \cite{necoara2019linear,Karimi2016,garrigos2017convergence} and \cite{zhang2020new,apidopoulos2022convergence} and references therein.



\subsection{Basic properties of the dynamical system}

To streamline the analysis to come, we recall here some basic properties of the system \eqref{eq:DIN} as originally established in \cite{alvarez2002second} (see Theorem \& Corollary $2.1$ and Theorem $4.1$). 

\PM{Minor rewriting needed here.}
\begin{proposition}[\cite{alvarez2002second}]\label{proposition basic}
	Let $\obj\from\Hb\to\R$ be a twice differentiable function satisfying \cref{assumption basic F} and let $x(t)$, $t\geq 0$, be a solution trajectory of \eqref{eq:DIN}. Then the following properties hold:
	\begin{enumerate}[label=\roman*)]
		\item If $\nabla^{2}F(x)$ is locally Lipschitz continuous, there exists a unique global solution $(x(t))_{t\geq 0}$ satisfying \eqref{eq:DIN}, with $\dot{x}(\cdot)$ and $\nabla F(x(\cdot))$ belonging to $L^{2}(0,+\infty)$ and $\lim\limits_{t\to\infty}F(x(t))\in \R$. 
		\item If $(x(t))_{t\geq 0}$ is bounded, then
		$\lim\limits_{t\to\infty}\nabla F(x(t)) = \lim\limits_{t\to\infty}\dot{x}(t)=\lim\limits_{t\to\infty}\ddot{x}(t) = 0$
		\item If in addition the function $F$ satisfies the \L ojasiewicz condition \ref{PL}, then the solution-trajectory converges towards a critical point of $F$.
	\end{enumerate} 
	
\end{proposition}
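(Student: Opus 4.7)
The plan is to follow the Lyapunov-based argument of \cite{alvarez2002second}. My strategy is to identify an appropriate energy function whose dissipation yields the integrability statements of (i), combine these bounds with Barbalat's lemma for (ii), and finally invoke a \L ojasiewicz trajectory argument of Haraux--Jendoubi type for (iii).

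For part (i), I would start from the mechanical energy $E(t) = F(x(t)) - \inf F + \tfrac{1}{2}\norm{\dot x(t)}^{2}$. Its derivative along \eqref{eq:DIN} reads
\[
\dot E(t) = -\alpha\norm{\dot x(t)}^{2} - \beta\inner{\nabla^{2}F(x(t))\dot x(t)}{\dot x(t)},
\]
whose second term fails to be sign-definite in the non-convex regime. To bypass this, I would exploit the identity $\beta\nabla^{2}F(x)\dot x = \tfrac{d}{dt}(\beta\nabla F(x))$, which recasts \eqref{eq:DIN} as $\tfrac{d}{dt}(\dot x + \beta\nabla F(x)) + \alpha\dot x + \nabla F(x) = 0$. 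Working with the shifted velocity $v = \dot x + \beta\nabla F(x)$ and an augmented energy of the form $\widetilde E = F - \inf F + \tfrac{1}{2}\norm{v}^{2}$ plus a carefully balanced cross-term, one recovers a genuine dissipation inequality yielding integrability of $\norm{\dot x}^{2}$ and $\norm{\nabla F(x)}^{2}$ on $[0,+\infty)$. Local existence and uniqueness are classical Cauchy--Lipschitz once $\nabla^{2}F$ is locally Lipschitz; global existence on $[0,+\infty)$ then follows from the energy bound precluding finite-time blow-up, while monotone decay of $\widetilde E$ combined with $\inf F > -\infty$ forces $\lim_{t\to\infty}F(x(t))\in\R$.

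For part (ii), I would assume $x(t)$ is bounded. Continuity of $F,\nabla F,\nabla^{2}F$ then provides uniform bounds along the trajectory, the energy bound of (i) gives boundedness of $\dot x$, and substituting into \eqref{eq:DIN} yields boundedness of $\ddot x$. Hence $\norm{\dot x(t)}^{2}$ is uniformly continuous on $[0,+\infty)$; combined with $\int_{0}^{\infty}\norm{\dot x(t)}^{2}\,dt<\infty$ from (i), Barbalat's lemma delivers $\dot x(t)\to 0$. The same reasoning applied to $\norm{\nabla F(x(t))}^{2}$ (exploiting $\tfrac{d}{dt}\norm{\nabla F(x)}^{2} = 2\inner{\nabla^{2}F(x)\dot x}{\nabla F(x)}$) gives $\nabla F(x(t))\to 0$, and substituting back into \eqref{eq:DIN} forces $\ddot x(t)\to 0$.

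For part (iii), I would invoke the Haraux--Jendoubi template. Part (ii) ensures that every $\omega$-limit point of the trajectory lies in $\crit F$, and monotonicity of $F(x(t))$ forces $F$ to be constant on this limit set. For any $\bar x$ in the $\omega$-limit set, setting $H(t) = (F(x(t))-F(\bar x))^{1-\theta}$ with $\theta\in(0,\tfrac{1}{2}]$ matched to the \L ojasiewicz exponent in \ref{PL}, and combining \ref{PL} with the Lyapunov estimate from (i), yields a bound of the form $-\dot H(t)\gtrsim \norm{\dot x(t)}$ up to controllable correction terms. Integrating then proves $\int_{0}^{\infty}\norm{\dot x(t)}\,dt<\infty$, hence convergence of $x(t)$ to a point $x_{\infty}\in\crit F$. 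The main obstacle throughout is the handling of the Hessian-driven damping, which destroys the strict dissipativity of the naive mechanical energy in the non-convex regime and forces the shifted-velocity construction described in (i).
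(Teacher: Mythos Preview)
The paper does not prove this proposition; it is recalled from \cite{alvarez2002second} (Theorem \& Corollary~2.1 and Theorem~4.1 there), so there is no in-paper argument to compare against. Your sketch is broadly faithful to that original source, and parts (i)--(ii) are essentially correct. One simplification: with the shifted velocity $v=\dot x+\beta\nabla F(x)$, the energy $\widetilde E=(1+\alpha\beta)(F-\inf F)+\tfrac12\norm{v}^{2}$ already dissipates as $\dot{\widetilde E}=-\alpha\norm{\dot x}^{2}-\beta\norm{\nabla F(x)}^{2}$, so no additional cross-term is needed; this is exactly the Lyapunov function \eqref{Lyapunov def} with $a=1+\alpha\beta$.

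There is, however, a genuine gap in your treatment of (iii). You invoke ``monotonicity of $F(x(t))$'' and set $H(t)=(F(x(t))-F(\bar x))^{1-\theta}$, but for second-order inertial dynamics---including \eqref{eq:DIN}---the objective $F(x(t))$ is \emph{not} monotone in general: the trajectory may overshoot and $F(x(t))-F(\bar x)$ can change sign, so $H$ need not be well defined and the inequality $-\dot H\gtrsim\norm{\dot x}$ breaks down. The correct \L ojasiewicz argument, as carried out in \cite{alvarez2002second}, replaces $F$ by the genuinely nonincreasing energy $\widetilde E$: one proves a \L ojasiewicz-type inequality for $\widetilde E$ (combining \ref{PL} for $F$ with the structure of $\widetilde E$) and then desingularizes $\widetilde E$ rather than $F$. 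Incidentally, the constancy of $F$ on the $\omega$-limit set already follows from the existence of $\lim_{t\to\infty}F(x(t))$ established in (i), not from any monotonicity.
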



\section{Main results}
\label{sec:results}

In this section we present and discuss the main results of this work, concerning the convergence behavior of the solution-trajectory of the system \eqref{eq:DIN} and its first-order variant \eqref{HBHDfirstorder1}. 

\VM{Slightly restating: In this Section we present and discuss the main results of this work, regarding the convergence behavior of the solution-trajectory of the system \eqref{eq:DIN} and its first-order variant \eqref{HBHDfirstorder1}. More precisely, Theorem \ref{basicteorates} provides asymptotic linear rates of convergence for the objective function $F(x(t))$ to a limiting critical value $F(\bar{x})$ and guarantees the decay of $\nabla F(x(t))$ to zero. Lastly, provides an explicit formula of the geometric convergence rate  $\Rb(\alpha,\beta)$, for the gap $F(x(t))-F(\bar{x})$, as also for $\norm{\nabla F(x(t))}$, depending on the friction parameters $\alpha$ and $\beta$. Note that we are always under the assumption that $F$ satisfies condition \ref{PL} with $q=2$.}


Theorem \ref{basicteorates} provides asymptotic linear rates of convergence for the objective function $F(x(t))$ to a limiting critical value $F(\bar{x})$ and the decay of $\nabla F(x(t))$ to zero, when the function $F$ satisfies condition \hyperref[PL]{$\mathcal{L}(2)$}.

\PM{TODO: slight rewrite.}
\begin{theorem}\label{basicteorates}
Suppose $\obj$ satisfies \cref{asm:obj,asm:KL} with $q=2$.
Let also $\left(x(t)\right)_{t\geq 0}$ be the solution-trajectory of \eqref{eq:DIN}, with $\lim\limits_{t\to\infty}x(t)=\bar{x}\in \crit{F}$.
Let also $t_{0}>0$ such that $\left(x(t)\right)_{t\geq t_{0}} \in \Omega$, where $\Omega$ is a neighborhood of $\bar{x}$, that $F$ satisfies \hyperref[PL]{$\mathcal{L}(2)$} with $\mu>0$. Then for all $t\geq t_{0}$ the following hold:
\begin{equation}
	F(x(t))-F(\bar{x}) \leq \left(F(x(t_{0}))-F(\bar{x}) + \frac{1}{2C(\alpha,\beta)}\norm{\beta\nabla F(x(t_{0}))+\dot{x}(t_{0})}^2\right)e^{-\Rb(\alpha,\beta)(t-t_{0})}
\end{equation}
where 
\begin{equation}
\label{rate factor on alphabeta}
\Rb(\alpha,\beta)
    = \begin{dcases}
        2\alpha
            & \text{ if } ~   \left\{\alpha < \sqrt{\mu}\right\}\&  \left\{\beta\in\left(\frac{\alpha}{\mu},\frac{1}{\alpha}\right)\right\}
            \\
        \frac{1+\alpha\beta - \auxconst(\alpha,\beta)}{2\beta}
            \qquad   & \text{ if } ~   \left\{  \alpha > 0\right\} \& \left\{\beta\in \left(0, \frac{\alpha}{\mu}\right]\cup \left[\frac{1}{\alpha},+\infty\right) \right\} 
    \end{dcases}
\end{equation}
and
\begin{equation}
\label{constant on alphabeta}
C(\alpha,\beta) =  \begin{dcases}
			1-\alpha\beta   & \text{ if } ~   \left\{\alpha < \sqrt{\mu}\right\}\&  \left\{\beta\in\left(\frac{\alpha}{\mu},\frac{1}{\alpha}\right)\right\} 
            \\
			\frac{\left(1+\alpha\beta + \auxconst(\alpha,\beta)\right)}{2}   \qquad   & \text{ if } ~   \left\{  \alpha > 0\right\} \& \left\{\beta\in \left(0, \frac{\alpha}{\mu}\right]\cup \left[\frac{1}{\alpha},+\infty\right) \right\} 
		\end{dcases}
	\end{equation}
and $\auxconst(\alpha,\beta)$ is given by \begin{equation}\label{eq: q expression}
    \auxconst(\alpha,\beta):= \sqrt{\left((\alpha-\mu\beta)\beta+1\right)^{2}-4(1-\alpha\beta)\mu\beta^{2}} -\mu\beta^{2}
\end{equation}   
If in addition, $F(x(t)) \geq F(\bar{x})$, for all $t\geq t_{0}$, then for any $\varepsilon>0$
\PM{TODO: remove the integral, remove $\tilde\Rb$ and restate accordingly, being careful with open vs. closed intervals of validity.
Transfer stuff to remarks.}
	\begin{equation}\label{estimate grad basic teo}
		\int_{t_{0}}^{+\infty}e^{(\Rb(\alpha,\beta)-\varepsilon)t}\norm{\nabla F(x(t))}^2dt <+\infty
	\end{equation} 
	In particular, $\norm{\nabla F(x(t))}=\petito{e^{-\frac{\Rb(\alpha,\beta)-\varepsilon}{2}t}}$, as $t\to+\infty$. 
\end{theorem}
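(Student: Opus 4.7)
My plan is a Lyapunov-function analysis with the energy dictated by the statement itself,
\begin{equation*}
\mathcal{E}(t) \defeq F(x(t)) - F(\bar{x}) + \frac{1}{2C(\alpha,\beta)}\norm{\beta\nabla F(x(t)) + \dot{x}(t)}^{2}.
\end{equation*}
The core step is to prove the single differential inequality $\mathcal{E}'(t) + \Rb(\alpha,\beta)\,\mathcal{E}(t) \leq 0$ for all $t \geq t_{0}$, from which Gronwall's lemma yields $\mathcal{E}(t) \leq \mathcal{E}(t_{0})e^{-\Rb(\alpha,\beta)(t-t_{0})}$, and the first estimate follows since $F(x(t))-F(\bar{x}) \leq \mathcal{E}(t)$ by construction.

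To derive this inequality, I would differentiate $\mathcal{E}$ along the dynamics, substituting $\beta\nabla^{2}F(x(t))\dot{x}(t) + \ddot{x}(t) = -\alpha\dot{x}(t) - \nabla F(x(t))$ from \eqref{eq:DIN}, expand the quadratic $\norm{\beta\nabla F+\dot{x}}^{2}$, and apply the \L ojasiewicz bound $F(x)-F(\bar{x}) \leq \frac{1}{2\mu}\norm{\nabla F}^{2}$ to the $\Rb(F-F(\bar{x}))$ term. This realises $\mathcal{E}'(t)+\Rb\,\mathcal{E}(t)$ as a quadratic form in $(\nabla F(x), \dot{x})$, with cross-term coefficient $1-\frac{1+\alpha\beta-\beta\Rb}{C}$ and diagonal coefficients $a \defeq \frac{\Rb}{2\mu}+\frac{\Rb\beta^{2}-2\beta}{2C}$ for $\norm{\nabla F}^{2}$ and $c \defeq \frac{\Rb-2\alpha}{2C}$ for $\norm{\dot{x}}^{2}$. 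The pivotal observation is the algebraic identity
\begin{equation*}
C(\alpha,\beta) + \beta\,\Rb(\alpha,\beta) = 1+\alpha\beta,
\end{equation*}
which is trivial in Case 1 and, in Case 2, is immediate from $2\beta\Rb = 1+\alpha\beta-\auxconst$ and $2C = 1+\alpha\beta+\auxconst$; it forces the cross term to vanish identically.

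It remains to verify $a,c \leq 0$ in each regime. In Case 1 ($\Rb=2\alpha$, $C=1-\alpha\beta$) one finds $c=0$ and $a=\alpha/\mu-\beta \leq 0$ thanks to $\beta > \alpha/\mu$. In Case 2, the explicit expression for $\Rb$ is precisely the smaller root of the scalar quadratic $\beta r^{2} - (1+\alpha\beta+\mu\beta^{2})r + 2\mu\beta = 0$ (as seen by expanding $(1+\alpha\beta+\mu\beta^{2})^{2}-8\mu\beta^{2}$ and recognising it as $((\alpha-\mu\beta)\beta+1)^{2}-4(1-\alpha\beta)\mu\beta^{2}$); combined with $C=1+\alpha\beta-\beta\Rb$ this condition is exactly $a=0$, while the restriction $\beta\in (0,\alpha/\mu]\cup[1/\alpha,+\infty)$ is what guarantees $C>0$ and $\Rb\leq 2\alpha$ (hence $c\leq 0$), by a case split on the sign of $1-\alpha\beta$.

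For the integrability bound \eqref{estimate grad basic teo}, I would repeat the same derivation with $\Rb$ replaced by $\Rb-\varepsilon$ (keeping $C$ unchanged). The cross-term is no longer annihilated but becomes $-\frac{\varepsilon\beta}{C}\inner{\nabla F}{\dot{x}}$, absorbable via Young's inequality; the coefficient of $\norm{\nabla F}^{2}$ then acquires a strictly negative increment of order $-\eta(\varepsilon)<0$, while the coefficient of $\norm{\dot{x}}^{2}$ remains non-positive. Under the standing hypothesis $F(x(t))\geq F(\bar{x})$, which gives $\mathcal{E}(t)\geq 0$, integrating the resulting bound $\frac{d}{dt}(e^{(\Rb-\varepsilon)t}\mathcal{E}(t)) \leq -\eta(\varepsilon)\,e^{(\Rb-\varepsilon)t}\norm{\nabla F(x(t))}^{2}$ over $[t_{0},+\infty)$ yields \eqref{estimate grad basic teo}, and $\norm{\nabla F(x(t))}=\petito{e^{-(\Rb-\varepsilon)t/2}}$ follows by coupling this integrability with the decay $\norm{\nabla F(x(t))}\to 0$ supplied by \cref{proposition basic}(ii). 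I expect the main obstacle to be the Case 2 algebraic bookkeeping: simultaneously verifying non-negativity of the radicand defining $\auxconst$, positivity of $C$, and the bound $\Rb\leq 2\alpha$ throughout both prescribed sub-ranges of $\beta$.
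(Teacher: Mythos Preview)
Your proposal is correct and follows essentially the same route as the paper: the Lyapunov energy $\mathcal{E}(t)$ is, up to the normalisation by $C(\alpha,\beta)$, exactly the paper's $V(t)=a(F(x(t))-F(\bar{x}))+\tfrac12\norm{\beta\nabla F(x(t))+\dot{x}(t)}^{2}$ with $a=C(\alpha,\beta)$, and your identity $C+\beta\Rb=1+\alpha\beta$ is the paper's relation $R=\tfrac1\beta(1+\alpha\beta-a)$. The only difference is presentational: the paper leaves $a$ free, derives the admissibility conditions \eqref{conditions on a/d}, and then optimises over $a$ via \cref{lemma lyapunov2,lemma lyapunov3}, whereas you plug in the optimal $a$ from the start and verify directly---the ``Case~2 algebraic bookkeeping'' you flag as the main obstacle is precisely what those two lemmas handle.
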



\begin{remark}
	Theorem \ref{basicteorates}, provides an explicit formula of the geometric convergence rate  $\Rb(\alpha,\beta)$, for the gap $F(x(t))-F(\bar{x})$, as also for $\nabla F(x(t))$, depending on the friction parameters $\alpha$ and $\beta$. 
    In Figure \ref{Figure2}, we provide an illustration of  the convergence exponent $\Rb(\alpha,\beta)$ expressed in \eqref{rate factor on alphabeta} as a function of $\alpha$ and $\beta$, which gives a better grasp of its behavior.
\end{remark}


\begin{figure}[t]
\begin{center}
\scalebox{0.41}[0.34]{\includegraphics{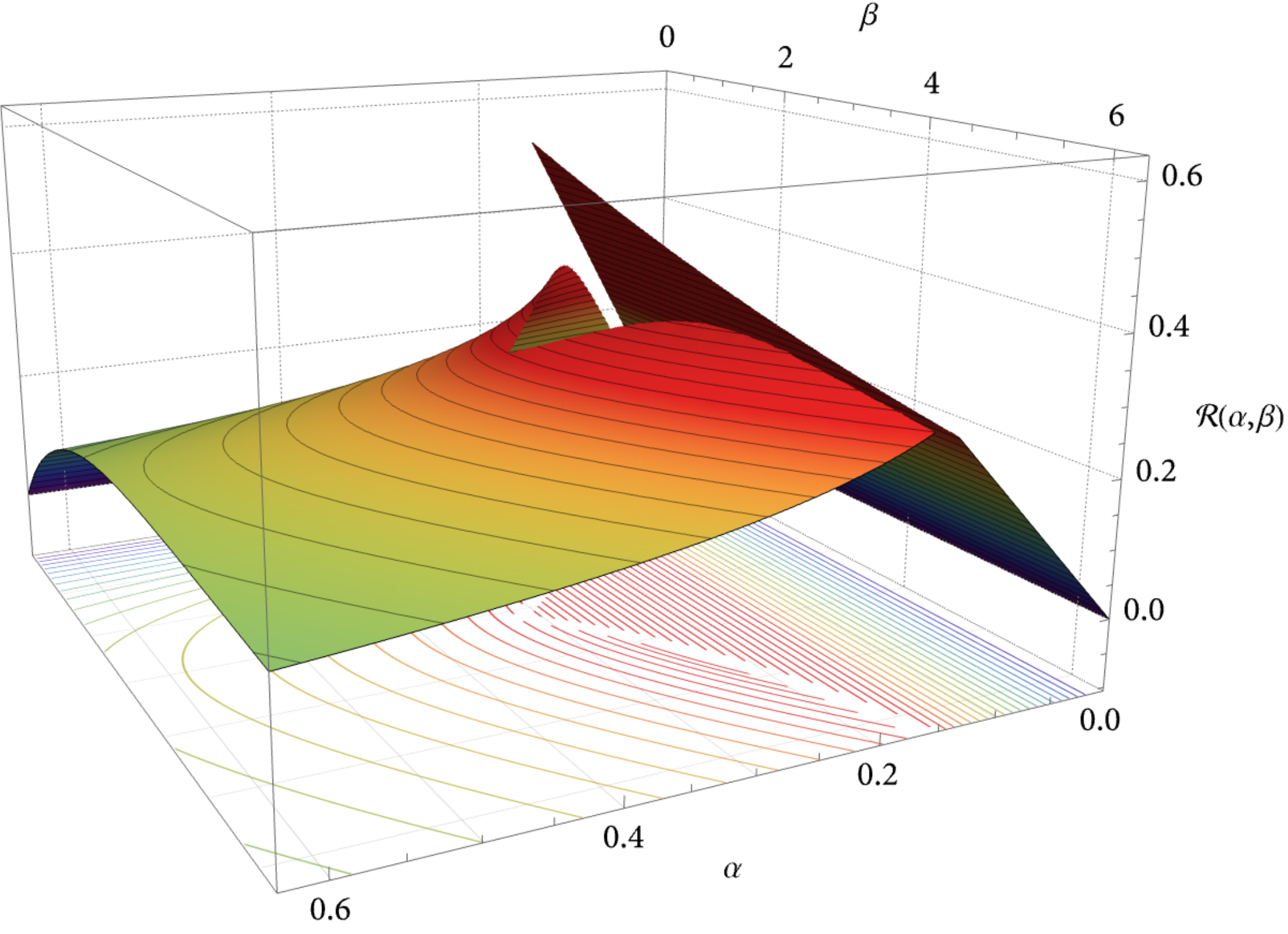}}
\caption{Illustration of the convergence exponent $\Rb(\alpha,\beta)$ expressed in \eqref{rate factor on alphabeta} as a function of $\alpha$ and $\beta$. 
}\label{Figure2}
\end{center}
\end{figure}

\begin{remark}[Localization]
	Note that the convergence rates as stated in Theorem \ref{basicteorates} hold true locally, in a neighborhood $\Omega$ of $\bar{x}\in \crit{F}$, where $F$ satisfies the \L ojasiewicz condition $\Lb(2)$ with constant $\mu>0$. However, since the function $F$ satisfies $\Lb(2)$ the convergence of $x(t)$ to $\bar{x}$ is assured (see point iii) in Proposition \ref{proposition basic}), there exists a time $t_{0}$, such that $x(t)$ will eventually enter (and remain for all $t\geq t_{0}$) in $\Omega$, where the rates of Theorem \ref{basicteorates} are valid. 
\end{remark}
\VM{I would say this seems more like a comment to clear things up. Possibly a footnote, if allowed? Sth along the lines of "F satisfies $\Lb(2)$, so convergence of $x(t)$ to $\bar{x}$ is assured by Proposition \ref{proposition basic}. Trajectory $x(t)$ enters and remains in a neighborhood $\Omega$ of $\bar{x}\in \crit{F}$, where theorem's rates are valid."}

The next Corollary provides the optimal tuning for the friction parameters $\alpha$ and $\beta$, that yield an optimal (maximal) worst case convergence factor $\Rb(\alpha,\beta)$, according to Theorem \ref{basicteorates}.
\PM{TODO: Check if can be streamlined.}
\begin{corollary}[Optimal rate]\label{cor optimal rates}
	Let $F$ be a function satisfying Assumption \ref{assumption basic F} and \ref{definition PL} with $q=2$. Let also $\left(x(t)\right)_{t\geq 0}$ be the solution-trajectory of \eqref{eq:DIN}, with $\lim\limits_{t\to\infty}x(t)=\bar{x}\in \crit{F}$.
Assume also that $\left(x(t)\right)_{t\geq t_{0}} \in \Omega$, where $\Omega$ is a neighborhood of $\bar{x}$, that $F$ satisfies \hyperref[PL]{$\mathcal{L}(2)$} with $\mu>0$. Then for all $0<\varepsilon<2\sqrt{\mu}$, if  $\alpha=\sqrt{\mu}-\frac{\varepsilon}{2}$ and $\beta\in \left(\frac{2\sqrt{\mu}-\varepsilon}{2\mu}, \frac{2}{2\sqrt{\mu}-\varepsilon}\right)$, it holds:
	\begin{equation}\label{rate optimal 2sqrtm}
		F(x(t))-F(\bar{x}) \leq \left(F(x(t_{0}))-F(\bar{x})+ \frac{1}{2\sqrt{\mu}\varepsilon}\norm{\nabla F(x(t_{0}))+\sqrt{\mu}\dot{x}(t_{0})}^2\right)e^{-(2\sqrt{\mu}-\varepsilon)(t-t_{0})}
	\end{equation}
	and 
	\begin{equation}
		\norm{\nabla F(x(t))} = \petito{e^{-(\sqrt{\mu}-\varepsilon)t}} ~ , ~ \text{ as } t\to+\infty
	\end{equation}
\end{corollary}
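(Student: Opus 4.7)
The plan is to observe that Corollary \ref{cor optimal rates} is a direct specialization of Theorem \ref{basicteorates} to the first branch of the piecewise rate formula \eqref{rate factor on alphabeta}, obtained by pushing the inertial coefficient $\alpha$ as close as possible to the critical threshold $\sqrt{\mu}$. The key structural point I would emphasize is that, in that branch, $\Rb(\alpha,\beta) = 2\alpha$ does not depend on $\beta$, so the worst-case optimal rate is achieved by maximizing $\alpha$ alone subject to the open constraint $\alpha < \sqrt{\mu}$, whence the parameterization $\alpha = \sqrt{\mu} - \varepsilon/2$ proposed in the statement.

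The first step is to check that the prescribed choices lie in the admissible region of the first branch. Since $0 < \varepsilon < 2\sqrt{\mu}$, we have $0 < \alpha < \sqrt{\mu}$; moreover, the interval $\left(\frac{\alpha}{\mu}, \frac{1}{\alpha}\right)$ is precisely $\left(\frac{2\sqrt{\mu}-\varepsilon}{2\mu}, \frac{2}{2\sqrt{\mu}-\varepsilon}\right)$, matching the range specified for $\beta$. Hence Theorem \ref{basicteorates} applies with $\Rb(\alpha,\beta) = 2\alpha = 2\sqrt{\mu} - \varepsilon$ and $C(\alpha,\beta) = 1 - \alpha\beta$, already giving the exponent in the claimed bound.

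To bring the bound into the explicit form \eqref{rate optimal 2sqrtm}, I would specialize further to the convenient choice $\beta = 1/\sqrt{\mu}$, which is admissible as soon as $\alpha < \sqrt{\mu}$. With this choice, $C(\alpha,\beta) = 1 - \alpha/\sqrt{\mu} = \varepsilon/(2\sqrt{\mu})$, and $\norm{\beta\nabla F(x(t_0)) + \dot{x}(t_0)}^2 = \mu^{-1}\norm{\nabla F(x(t_0)) + \sqrt{\mu}\dot{x}(t_0)}^2$, so substituting into the estimate of Theorem \ref{basicteorates} yields \eqref{rate optimal 2sqrtm} after a routine rescaling of the leading constant. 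For the gradient estimate, I would invoke \eqref{estimate grad basic teo} with the auxiliary parameter equal to the corollary's $\varepsilon$, so that $(\Rb(\alpha,\beta) - \varepsilon)/2 = (2\sqrt{\mu} - 2\varepsilon)/2 = \sqrt{\mu} - \varepsilon$, delivering $\norm{\nabla F(x(t))} = \petito{e^{-(\sqrt{\mu}-\varepsilon)t}}$ as $t \to +\infty$.

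Since the corollary is essentially a tuning statement layered on top of Theorem \ref{basicteorates}, no substantive obstacle arises; the only subtlety worth flagging is that the boundary value $\alpha = \sqrt{\mu}$ is excluded by the strict inequality in the first branch of \eqref{rate factor on alphabeta}, which is precisely why the optimal rate $2\sqrt{\mu}$ cannot be attained directly and must instead be approached as $\varepsilon \downarrow 0$.
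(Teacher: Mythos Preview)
Your proposal is correct and matches the paper's intended route: the corollary is not given a separate proof in the paper and is meant to follow by direct substitution into Theorem~\ref{basicteorates}, exactly as you do (check that $\alpha=\sqrt{\mu}-\varepsilon/2$ and $\beta\in(\alpha/\mu,1/\alpha)$ land in the first branch, read off $\Rb=2\alpha=2\sqrt{\mu}-\varepsilon$, and specialize the constant). One minor caveat: the displayed constant in \eqref{rate optimal 2sqrtm} with the term $\norm{\nabla F(x(t_0))+\sqrt{\mu}\,\dot{x}(t_0)}^2$ corresponds to the distinguished choice $\beta=1/\sqrt{\mu}$ (as you correctly identify), not to every $\beta$ in the stated interval, since $C(\alpha,\beta)=1-\alpha\beta$ degenerates as $\beta\to 1/\alpha$; your computation actually yields the prefactor $1/(\sqrt{\mu}\,\varepsilon)$ rather than $1/(2\sqrt{\mu}\,\varepsilon)$, so the discrepancy is a factor-of-two typo in the stated corollary, not a gap in your argument.
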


Below we provide some remarks commenting on the results of Theorem \ref{basicteorates} and Corollary \ref{cor optimal rates}. 
\begin{remark}[Optimality of rates]\label{rem:optimality rates}
	If $\hilbert=\R$ and $F(x)=\frac{\mu}{2}x^{2}$ (one dimensional quadratic function), which satisfies the \L ojasiewicz condition \hyperref[PL]{$\Lb(2)$} (it is strongly convex), the system \eqref{eq:DIN} simplifies to the linear ODE $\ddot{x}(t) +(\alpha+\mu\beta)\dot{x}(t) + \mu x(t)=0$, for which the solution trajectory can be expressed explicitly. In this case, a straightforward computation yields
	\begin{equation}
		F(x(t)) - F_{\ast} \leq C_{\varepsilon} e ^{-(r(\alpha,\beta)-\varepsilon)} \quad \text{ with : } ~ r(\alpha,\beta)=\alpha+\mu\beta -\sqrt{\max\{0,(\alpha+\mu\beta)^{2}-4\mu\}}
	\end{equation}
	and $\sup\{r(\alpha,\beta)~:~\alpha>0 ,~ \beta >0\} = 2\sqrt{\mu}$. This example shows that the convergence rate $\bigof{e^{-(2\sqrt{\mu}-\varepsilon)t}}$ found in \eqref{rate optimal 2sqrtm} of Corollary \ref{cor optimal rates} is worst case optimal for the class of functions satisfying the \L ojasiewicz condition \hyperref[PL]{$\Lb(2)$}.
\end{remark}

\begin{remark}\label{remark comparison}[Comparison with previous works]
To the best of our knowledge the rates found in Theorem \ref{basicteorates} and Corollary \ref{cor optimal rates} for the system \eqref{eq:DIN} are improved with respect to the ones found in the related literature. In particular the rate $\bigof{e^{-2\sqrt{\mu}t}}$ is faster than the one found in \cite[Theorem $7$]{attouch2022first} for strongly convex functions, which is $\bigof{t^{-\frac{\sqrt{\mu}}{2}t}}$. 
Since the strong convexity is a stronger notion than condition \hyperref[PL]{$\Lb(2)$}, the difference between the two rates is due to the convergence analysis and in particular the choice of the Lyapunov energy (indeed our analysis can be trivially applied for strongly convex functions). It is also worth mentioning that the rate $\bigof{e^{-2\sqrt{\mu}t}}$ was also established very recently in \cite{wang2025accelerated} in the strongly convex setting, which however holds for a rescaled version of system \eqref{eq:DIN} (see \cite[Corollary $2.2$]{wang2025accelerated}). Notice that the optimal rate $\bigof{e^{-2\sqrt{\mu}t}}$, as expressed in \eqref{rate optimal 2sqrtm} of Corollary \ref{cor optimal rates} is obtained by choosing $\alpha = \sqrt{\mu}$ and $\beta = \frac{1}{\sqrt{\mu}}$. This choice contrasts the one made in \cite[Theorem $7$]{attouch2022first} with $\alpha=2\sqrt{\mu}$, $\beta\leq \frac{1}{2\sqrt{\mu}}$ and indicates why the former (i.e. $\alpha = \sqrt{\mu}$, $\beta = \frac{1}{\sqrt{\mu}}$) is a better choice, leading to a faster worst-case rate. Indeed, this choice suggests to apply a more aggressive policy concerning the isotropic damping term (i.e. taking $\alpha$ smaller) and enforce a stronger geometric damping (i.e. taking $\beta$ larger). The difference on the performance based on these two policies is also witnessed numerically in the example of minimization of quadratic functions, as illustrated in Section \ref{sec:numerics} (see Figure \ref{Figure3}).

Finally, concerning the non-convex setting, under \hyperref[PL]{$\Lb(2)$}, by inspecting the proof of \cite[Theorem $13$]{castera2021inertial} (see also \cite[Theorem $3.4$]{maulen2024inertial}) , one can derive a linear rate of convergence result for $F(x(t))-F(\bar{x})$, which is not explicit and thus a straightforward comparison is difficult to be made. However, by following the lines of the aforementioned proof, our computations reveal a linear rate of the form $\bigof{e^{c_1\mu t}}$, for some abstract constant $c_{1}\leq2$, which is worse than the one in \cref{cor optimal rates}, for small values of $\mu$.
\end{remark}

\begin{remark}\label{remark comparison hb}[Comparison with the heavy ball with friction dynamics]
	It is worth mentioning that the optimal worst-case rate $\bigof{e^{-2\sqrt{\mu}t}}$ found in Corollary \ref{cor optimal rates}, holds true in a largely more general setting than the strongly convex one.  In contrast, for the plain heavy ball system \eqref{HBode}, the optimal rate $\bigof{e^{-2\sqrt{\mu}t}}$ is only proved for strongly convex and twice differentiable functions \cite[Theorem $9$]{polyak1964some}. Once the strong convexity is replaced by a weaker notion, such as quasi-strong convexity, or quadratic growth condition, or $\Lb(2)$ the associated proven linear rate is strictly slower than $\bigof{t^{-2\sqrt{\mu}t}}$, both in the convex case, see e.g. \cite[Theorem $1$]{aujol2022convergencequasi},  \cite[Corollary $1$]{aujol2023convergencePL} and the non-convex one, see \cite[Theorem $1$]{apidopoulos2022convergence}.
	This observation makes more clear quantitatively the advantages of the system \eqref{eq:DIN} with respect to the plain heavy ball with friction method \eqref{HBode}, in more complex settings where strong convexity does not hold true.
\end{remark}

\subsection{Convergence analysis under perturbations}\label{paragraph stability}

In this section we extend the convergence analysis of system \eqref{eq:DIN} in the presence of external perturbations. In particular we consider the following differential equation:
\begin{equation}\label{eq:DINperturbed}\tag{DIN-p}
	\begin{cases}
		&\ddot{x}(t) +\alpha\dot{x}(t) +\beta\nabla^{2}F(x(t))\dot{x}(t) +\nabla F(x(t)) +g(t) =0 \\ 
		& x(0) = x_{0} ~, \quad \dot{x}(0) =v_{0}
	\end{cases}
\end{equation} 
where $g:\hilbert\to\R$ is an artificial term incorporating any possible inexact information in \eqref{eq:DIN} (e.g. approximation of $\nabla F$ or computational errors). Assuming that $g\in L^{1}_{loc}(\hilbert,\R)$, together with Assumption \ref{assumption basic F}, guarantee the existence of a solution to \eqref{eq:DINperturbed}. 

In particular, in Theorems \ref{basicteo2inexact} and \ref{basicteo2inexact q} we provide estimates on $F(x(t))-F(\bar{x})$ depending on: $i)$ the local geometry of $F$ around its critical points, i.e. the \L ojasiewicz exponent $q$ and $ii)$ on the controlability of the perturbation error $g(t)$.


\begin{theorem}\label{basicteo2inexact}
	Let $F$ be a function satisfying Assumption \ref{assumption basic F} and Assumption \ref{definition PL} with $q=2$. Let also $\left(x(t)\right)_{t\geq 0}$ be the solution-trajectory of \eqref{eq:DINperturbed} satisfying the following:
	
	i) There exists some $t_{0}>0$, such that  $\left(x(t)\right)_{t\geq t_{0}} \in \Omega$,  where $\Omega$ is a neighborhood of $\bar{x}\in\crit{F}$, that $F$ satisfies $\Lb(2)$ with $\mu>0$.
	
	Then the following hold:
	\begin{equation}\label{estimate gap perturbed 2}
		\begin{aligned}
			F(x(t))- & F(\bar{x})  \leq \left(\sqrt{M_{0}} + \frac{\Jb(t)}{\sqrt{2C(\alpha,\beta)}}\right)^{2}e^{-\Rb(\alpha,\beta)t}
		\end{aligned}
	\end{equation}
with $\Rb(\alpha,\beta)$ and $C(\alpha,\beta)$ as given in \eqref{rate factor on alphabeta} and \eqref{constant on alphabeta} (respectively) of Theorem \ref{basicteorates}
and $\Jb(t) = \int_{t_{0}}^{t}e^{\frac{\Rb(\alpha,\beta)s}{2}}\norm{g(s)}ds$, 
$M_{0}=\left(F(x(t_{0}))-F(\bar{x}) +\frac{1}{2C(\alpha,\beta)}\norm{\beta\nabla F(x(t_{0}))+\dot{x}(t_{0})}^{2}\right)e^{\Rb(\alpha,\beta)t_{0}}$.

In addition, if $F(x(t))\geq F(\bar{x})$ for all $t\geq t_{0}$, then for any $\varepsilon>0$, it holds:
\begin{equation}\label{estimate grad basic teo first inexact}
	\int_{t_{0}}^{+\infty}e^{(\Rb(\alpha,\beta)-\varepsilon)t}\norm{\nabla F(x(t))}^2dt =\bigof{\left(\int_{t_{0}}^{+\infty}e^{\frac{(\Rb(\alpha,\beta)-\varepsilon)s}{2}}\norm{g(s)}ds\right)^{2}}
\end{equation} 
	
	In particular, if $\Jb_{\infty}:=\int_{t_{0}}^{+\infty}e^{\frac{\Rb(\alpha,\beta)s}{2}}\norm{g(s)}ds <+\infty$, then
	\begin{equation}\label{estimate gap perturbed 2 strong}
		F(x(t))- F(x_{\ast}) = \bigof{e^{-\Rb(\alpha,\beta)t}}
	\end{equation}
\end{theorem}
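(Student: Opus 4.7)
The plan is to revisit the Lyapunov analysis that underlies Theorem~\ref{basicteorates} and track the additional terms produced by the perturbation $g(t)$. Concretely, the exact form of $M_{0}$ and of the constant $C(\alpha,\beta)$ in the statement points to the energy
\[
V(t) \defeq F(x(t))-F(\bar{x}) + \frac{1}{2C(\alpha,\beta)}\norm{\beta\nabla F(x(t))+\dot{x}(t)}^{2},
\]
which is precisely the Lyapunov function used to establish Theorem~\ref{basicteorates}. From that analysis we already know that, for $t\geq t_{0}$ such that $x(t)\in\Omega$, the unperturbed part of $\dot V(t)+\Rb(\alpha,\beta)V(t)$ is non-positive (in fact, it controls an extra $\|\nabla F(x(t))\|^{2}$ term which we will reuse for the second estimate).

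First, I would differentiate $V$ along the perturbed dynamics \eqref{eq:DINperturbed}. Using $\ddot{x}+\beta\nabla^{2}F(x)\dot{x}=-\alpha\dot{x}-\nabla F(x)-g(t)$, the computation splits as
\[
\dot V(t) \;=\; \underbrace{\inner{\nabla F(x(t))}{\dot{x}(t)}+\tfrac{1}{C(\alpha,\beta)}\inner{\beta\nabla F(x(t))+\dot{x}(t)}{-\alpha\dot{x}(t)-\nabla F(x(t))}}_{\text{same expression as in Thm \ref{basicteorates}}} \;-\; \tfrac{1}{C(\alpha,\beta)}\inner{\beta\nabla F(x(t))+\dot{x}(t)}{g(t)}.
\]
The first block is bounded above by $-\Rb(\alpha,\beta)\,V(t)$ by the proof of Theorem~\ref{basicteorates}. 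For the extra term, Cauchy\textendash Schwarz combined with the obvious inequality $\|\beta\nabla F(x(t))+\dot{x}(t)\|\leq\sqrt{2C(\alpha,\beta)\,V(t)}$ gives
\[
\dot V(t) + \Rb(\alpha,\beta)\,V(t) \;\leq\; \sqrt{\tfrac{2V(t)}{C(\alpha,\beta)}}\,\norm{g(t)}.
\]

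Multiplying through by $e^{\Rb(\alpha,\beta)t}$ and setting $W(t)\defeq e^{\Rb(\alpha,\beta)t}V(t)$ yields $\dot W(t)\leq \sqrt{2W(t)/C(\alpha,\beta)}\,e^{\Rb(\alpha,\beta)t/2}\norm{g(t)}$, which after dividing by $2\sqrt{W(t)}$ is equivalent to the one-sided Gronwall-type inequality
\[
\ddt\sqrt{W(t)} \;\leq\; \frac{e^{\Rb(\alpha,\beta)t/2}\norm{g(t)}}{\sqrt{2C(\alpha,\beta)}}.
\]
Integrating from $t_{0}$ to $t$ and using $W(t_{0})=M_{0}$ gives $\sqrt{W(t)}\leq \sqrt{M_{0}}+\Jb(t)/\sqrt{2C(\alpha,\beta)}$. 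Squaring, dividing by $e^{\Rb(\alpha,\beta)t}$ and using $F(x(t))-F(\bar{x})\leq V(t)$ produces \eqref{estimate gap perturbed 2}.

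For the gradient estimate \eqref{estimate grad basic teo first inexact}, I would repeat the computation above but retain the $\delta\,\norm{\nabla F(x(t))}^{2}$ slack that the proof of Theorem~\ref{basicteorates} produces in the unperturbed case (it is precisely this slack that yielded \eqref{estimate grad basic teo} in the noise-free setting). Substituting $\Rb(\alpha,\beta)$ by $\Rb(\alpha,\beta)-\varepsilon$ in the Lyapunov-multiplier absorbs the $-\varepsilon V(t)$ against a fraction of the $\delta\|\nabla F\|^{2}$ term, so one obtains a differential inequality of the form
\[
\ddt\bigl(e^{(\Rb(\alpha,\beta)-\varepsilon)t}V(t)\bigr) + \delta'\,e^{(\Rb(\alpha,\beta)-\varepsilon)t}\norm{\nabla F(x(t))}^{2} \;\leq\; \sqrt{\tfrac{2\,e^{(\Rb(\alpha,\beta)-\varepsilon)t}V(t)}{C(\alpha,\beta)}}\,e^{(\Rb(\alpha,\beta)-\varepsilon)t/2}\norm{g(t)}
\]
for a suitable $\delta'>0$. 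Applying the same $\sqrt{\cdot}$-linearisation to the $V$-dependent right-hand side and integrating, the $V$-factor is controlled by the previously established bound, and one is left to integrate the product of two $L^{2}$-like quantities. A Cauchy\textendash Schwarz (or Young) step then yields the $\bigoh(\Jb_{\infty}^{2})$ estimate in \eqref{estimate grad basic teo first inexact} and, as an immediate consequence, the rate \eqref{estimate gap perturbed 2 strong} whenever $\Jb_{\infty}<+\infty$.

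The main obstacle I anticipate is the gradient estimate: the algebra needed to keep track of the $\delta\|\nabla F\|^{2}$ slack while simultaneously shifting the exponent from $\Rb(\alpha,\beta)$ to $\Rb(\alpha,\beta)-\varepsilon$ has to be done case-by-case on the two branches defining $\Rb(\alpha,\beta)$ and $C(\alpha,\beta)$ in \eqref{rate factor on alphabeta}\textendash\eqref{constant on alphabeta}, and one must check that the constant $\delta'$ remains strictly positive in both regimes. The gap estimate itself should however go through cleanly by the Gronwall argument sketched above.
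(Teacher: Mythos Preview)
Your approach is essentially the paper's. The paper works with the unnormalised energy $V_a(t)=a\bigl(F(x(t))-F(\bar x)\bigr)+\tfrac12\|\beta\nabla F(x(t))+\dot x(t)\|^2$ with $a$ free, obtains $\dot V_a\le -R_aV_a+\|\beta\nabla F+\dot x\|\,\|g\|$, applies Gr\"onwall, and then closes the loop via Bihari's inequality with $\omega(u)=\sqrt{2u}$ before optimising over $a$ through Lemmas~\ref{lemma lyapunov2}\textendash\ref{lemma lyapunov3}; your $\sqrt{W}$-linearisation is precisely that Bihari step written out by hand, with $a$ already set to its optimal value $C(\alpha,\beta)$. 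One point to tighten: your ``obvious inequality'' $\|\beta\nabla F+\dot x\|\le\sqrt{2C(\alpha,\beta)V(t)}$ requires $F(x(t))\ge F(\bar x)$, and since you feed it into a differential inequality that you then integrate over $[t_0,t]$, you should justify (as the paper does) that this may be assumed without loss of generality because the gap estimate \eqref{estimate gap perturbed 2} is trivial whenever $F(x(t))<F(\bar x)$.
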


\begin{remark}
	Theorem \ref{basicteo2inexact} provides an estimate for $F(x(t))-F(\bar{x})$ similar to the one of Theorem  \ref{basicteorates}, with the presence of the perturbation term $g(t)$. Indeed, from \eqref{estimate gap perturbed 2}, we observe that the decay rate of the value gap, depends on the decay rate (integrability) of the error term $g(t)$. As also mentioned in Theorem \ref{basicteo2inexact} (see in particular \eqref{estimate gap perturbed 2 strong}), in order to achieve the same rate as the one in the exact setting $\bigof{e^{-\Rb(\alpha,\beta)}}$, the control condition on the perturbation term is  $e^{\frac{\Rb(\alpha,\beta)s}{2}}\norm{g(s)} \in L^{1}(t_{0},+\infty)$. Note however that the decay rate for the perturbation term $\norm{g(t)}$ needed, is strictly smaller than the convergence rate obtained by a factor of $2$.
\end{remark}
\begin{remark}
In the same spirit with Corollary \ref{cor optimal rates}, by setting $\alpha =\sqrt{\mu}-\frac{\varepsilon}{2}$ and $\beta\in\left(\frac{2\sqrt{\mu}-\varepsilon}{2\mu},\frac{2}{2\sqrt{\mu-\varepsilon}}\right)$, if we additionally assume that  $\bar{\Jb}_{\infty}:=\int_{t_{0}}^{+\infty}e^{\left(\sqrt{\mu}-\frac{\varepsilon}{2}\right)s}\norm{g(s)}ds <+\infty$, from \eqref{estimate gap perturbed 2} it follows that $	F(x(t))- F(x_{\ast}) = \bigof{e^{-(2\sqrt{\mu}-\varepsilon)t}}$. This rate is improved than the one found in \cite{attouch2021effect} for the system \ref{eq:DINperturbed}, which is of order  $\bigof{e^{\frac{\sqrt{\mu}}{2}t}}$.
\end{remark}

The next Theorem treats the case of an objective function satisfying the \L ojasiewicz condition \ref{PL} with $q\in(1,2)$.
\begin{theorem}\label{basicteo2inexact q}
Let $F$ be a function satisfying Assumptions \ref{assumption basic F} and \ref{definition PL} with $q\in(1,2)$ and $\left(x(t)\right)_{t\geq 0}$ be the solution-trajectory of \eqref{eq:DINperturbed}) Assume that there exists some $t_{0}>0$, such that for all $t\geq t_{0}$:\\
$i)$  $\left(x(t)\right)_{t\geq t_{0}} \in \Omega$, where $\Omega$ is a neighborhood of $\bar{x}\in \crit{F}$, that $F$ satisfies $\Lb(2)$ with $\mu>0$.\\
$ii)$ $\norm{\dot{x}(t)} \leq 1$ for all $t\geq t_{0}$.
	
Then the following hold:
\begin{equation}\label{estimate gap perturbed q}
	\begin{aligned}
		F(x(t))-  F(\bar{x})
		& \leq \frac{1}{\alpha\beta+1}\left(\sqrt{C_{4}} +\frac{\Ib(t)}{\sqrt{2}}\right)^{2}t^{-\frac{q}{2-q}}
	\end{aligned}
\end{equation}
	where 	$\Ib(t) = \int_{t_{0}}^{t}s^{\frac{q}{2(2-q)}}\norm{g(s)}ds$, 
	and\\
	$C_{4} = \max\left\{\left(\frac{q\max\left\{1,\left(\frac{\alpha\beta+1}{2\mu }+\beta^{2}\right)^{\frac{2}{q}}\right\}}{(2-q)\min\{\alpha,\beta\}}\right)^{\frac{q}{2-q}},t_{0}^{\frac{q}{2-q}}\left((\alpha\beta+1)\left(F(x(t_{0}))-F(\bar{x})\right) +\frac{\norm{\beta\nabla F(x(t_{0}))+\dot{x}(t_{0})}^{2}}{2}\right)\right\}$.
\end{theorem}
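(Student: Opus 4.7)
The strategy mirrors that of Theorem \ref{basicteo2inexact}, but the decay is now sublinear because $q<2$. As in the $q=2$ case, I would start from the Lyapunov function
\begin{equation*}
\mathcal{E}(t):=(\alpha\beta+1)\bigl(F(x(t))-F(\bar{x})\bigr)+\tfrac{1}{2}\norm{\beta\nabla F(x(t))+\dot{x}(t)}^{2},
\end{equation*}
differentiate along \eqref{eq:DINperturbed} and substitute $\ddot{x}+\beta\nabla^{2}F(x)\dot{x}=-\alpha\dot{x}-\nabla F(x)-g$. The $\inner{\nabla F}{\dot{x}}$ cross-terms cancel exactly (the factor $\alpha\beta+1$ in front of $F$ is chosen for exactly this purpose) and leave
$\dot{\mathcal{E}}(t)=-\beta\norm{\nabla F(x(t))}^{2}-\alpha\norm{\dot{x}(t)}^{2}-\inner{\beta\nabla F(x(t))+\dot{x}(t)}{g(t)}.$

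Next I would bound $\mathcal{E}$ from above by a $q$-power of the dissipation. Using \ref{PL} together with $\norm{\beta\nabla F+\dot{x}}^{2}\leq 2\beta^{2}\norm{\nabla F}^{2}+2\norm{\dot{x}}^{2}$, one has $\mathcal{E}(t)\leq \tfrac{\alpha\beta+1}{2\mu}\norm{\nabla F}^{q}+\beta^{2}\norm{\nabla F}^{2}+\norm{\dot{x}}^{2}$. Assumption $ii)$ gives $\norm{\dot{x}}\leq 1$ and \ref{proposition basic} yields $\norm{\nabla F(x(t))}\to 0$, so (enlarging $t_{0}$ if necessary) both quantities are $\leq 1$; since $q<2$ their squares are dominated by their $q$-th powers and, setting $A:=\tfrac{\alpha\beta+1}{2\mu}+\beta^{2}$, $\mathcal{E}(t)\leq \max\{1,A\}\bigl(\norm{\nabla F}^{q}+\norm{\dot{x}}^{q}\bigr)$. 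The power-mean inequality $(a+b)^{2/q}\leq 2^{2/q-1}(a^{2/q}+b^{2/q})$ (valid because $2/q>1$) then gives $\norm{\nabla F}^{2}+\norm{\dot{x}}^{2}\geq c_{q}\,\mathcal{E}(t)^{2/q}$ for an explicit $c_{q}>0$ involving $\max\{1,A^{2/q}\}$. Combining this with the previous display and estimating the perturbation term by $|\inner{\beta\nabla F+\dot{x}}{g}|\leq\sqrt{2\mathcal{E}}\,\norm{g}$, I arrive at the key nonlinear inequality
\begin{equation*}
\dot{\mathcal{E}}(t)\leq -\kappa\,\mathcal{E}(t)^{2/q}+\sqrt{2\mathcal{E}(t)}\,\norm{g(t)},\qquad \kappa:=c_{q}\min\{\alpha,\beta\}.
\end{equation*}

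The polynomial rate $t^{-q/(2-q)}$ then emerges through the rescaling $v:=\sqrt{\mathcal{E}}$, $w(t):=t^{q/(2(2-q))}v(t)$: a direct calculation that hinges on the identity $t^{q/(2(2-q))}v^{4/q-1}=t^{-1}w^{4/q-1}$ converts the inequality above into
\begin{equation*}
\dot{w}(t)\leq \tfrac{1}{t}\Bigl[\tfrac{q}{2(2-q)}\,w(t)-\tfrac{\kappa}{2}\,w(t)^{4/q-1}\Bigr]+\tfrac{1}{\sqrt{2}}\,t^{q/(2(2-q))}\norm{g(t)}.
\end{equation*}
The bracket vanishes at the threshold $W_{\ast}:=\bigl(\tfrac{q}{(2-q)\kappa}\bigr)^{q/(2(2-q))}$, whose square reproduces (up to a constant that can be absorbed in $c_{q}$) the first argument of the maximum defining $C_{4}$; the second argument is precisely $w(t_{0})^{2}=t_{0}^{q/(2-q)}\mathcal{E}(t_{0})$. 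A standard continuity / maximal-interval argument now gives $w(t)\leq\sqrt{C_{4}}+\Ib(t)/\sqrt{2}$: on any sub-interval where $w>W_{\ast}$ the bracket is non-positive, so $\dot{w}\leq\tfrac{1}{\sqrt{2}}t^{q/(2(2-q))}\norm{g}$ integrates directly into that bound; on sub-intervals where $w\leq W_{\ast}\leq\sqrt{C_{4}}$ the inequality is immediate. Squaring, dividing by $t^{q/(2-q)}$ and using $F(x(t))-F(\bar{x})\leq\mathcal{E}(t)/(\alpha\beta+1)$ then yields \eqref{estimate gap perturbed q}.

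The main technical obstacle is precisely this last step: for $q<2$ the energy inequality is genuinely nonlinear, so the linear Grönwall argument that closes the $q=2$ case has to be replaced by a cut-off comparison at $W_{\ast}$. The bookkeeping of the two regimes ($w\gtrless W_{\ast}$) is exactly what forces the ``$\max$'' form of $C_{4}$ and produces the correct sublinear rate $t^{-q/(2-q)}$.
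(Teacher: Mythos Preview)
Your approach is correct and matches the paper's strategy: the same Lyapunov energy with $a=\alpha\beta+1$, the same derivative computation, and the same use of \ref{PL} together with the smallness $\|\nabla F\|,\|\dot{x}\|\leq 1$ to reduce to a nonlinear inequality $\dot{\mathcal{E}}\leq -\kappa\,\mathcal{E}^{2/q}+(\text{perturbation})$. The only difference lies in how that inequality is integrated. The paper keeps the perturbation term in the form $\|g\|\cdot\|\beta\nabla F+\dot{x}\|$, applies Lemma~\ref{lemma nonlinear gronwall power p} (whose proof is precisely your threshold/cut-off argument on the two regimes), and then invokes Bihari's Lemma~\ref{lemmaBihari} with $\omega(u)=\sqrt{2u}$ to close the implicit dependence on $\|\beta\nabla F+\dot{x}\|$ before substituting back. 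You instead bound $\|\beta\nabla F+\dot{x}\|\leq\sqrt{2\mathcal{E}}$ upfront, pass to $w=t^{q/(2(2-q))}\sqrt{\mathcal{E}}$, and run the cut-off comparison directly on $w$; this fuses the two lemma applications into a single self-contained step and avoids the Bihari machinery. Both routes produce the same $\max$ structure in $C_{4}$ and the same rate $t^{-q/(2-q)}$.

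One small slip: Proposition~\ref{proposition basic} is stated only for the \emph{unperturbed} system \eqref{eq:DIN}, so you cannot invoke it to get $\|\nabla F(x(t))\|\to 0$ for \eqref{eq:DINperturbed}. The bound $\|\nabla F(x(t))\|\leq 1$ that you actually need follows directly from assumption~$i)$ (the trajectory is trapped in the neighborhood $\Omega$) and the continuity of $\nabla F$, exactly as the paper argues.
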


\begin{remark}
Theorem \ref{basicteo2inexact q} states that under sufficient integrability condition on $g$, the convergence rate  is sublinear and depends on the parameter $q$ appearing in condition \ref{PL}. Indeed, smaller $q$ (closer to $1$) indicates that the local geometry of $F$ around a limiting critical point is flatter, which demands a weaker control on $g$, but also leads to a slower asymptotic rate. In fact, even if not clearly stated, similar rates have been found in \cite[Theorem $13$]{castera2021inertial} (see also \cite{maulen2024inertial}) in the exact case when $g\equiv 0$, for the generalized version of system \ref{HBHDfirstorder1} (while the rates are not stated in the main Theorem, they can be deduced from its proof). In Theorem \ref{basicteo2inexact q} we extend these rates to the perturbed version (see also Theorem \ref{basicteorates firstorder q} for the first-order equivalent system).
\end{remark}

\begin{remark}
We stress out that the boundedness assumption on $x(t)\in \Omega$ and $\dot{x}(t)$ for all $t\geq t_{0}$, where $F$ satisfies \ref{PL} in $\Omega$, is necessary for the analysis in Theorems \ref{basicteo2inexact} and \ref{basicteo2inexact q}. In contrast with the exact setting (i.e. for the system \eqref{eq:DIN}), for the perturbed system \eqref{eq:DINperturbed}, there is no result showing convergence of the trajectory to a critical point of $F$. Our analysis suggests that under sufficient integrability and boundedness conditions on $g$ (depending on the exponent $q$), one could show that the assumption is actually superfluous. Nevertheless a detailed analysis of all the convergence properties of system \eqref{eq:DINperturbed} is beyond the scope of the current work and is let for future study. 
\end{remark}

\subsection{First-order in time/space equivalent system}\label{paragraph first order}
In this section we extend the previous results for the first-order coupled system \eqref{HBHDfirstorder1} associated to \eqref{eq:DIN}. In particular, by setting $y(t) = (1-\alpha\beta)x(t) - \beta \dot{x}(t) - \beta^{2}\nabla F(x(t))$ and using \eqref{eq:DIN} one can obtain (see \cite{alvarez2002second}):
\begin{equation}
	\begin{cases}{}\label{HBHDfirstorder}\tag{g-DIN}
		\dot{x}(t) +\beta\nabla F(x(t)) + \left(\alpha-\frac{1}{\beta}\right)x(t) + \frac{1}{\beta}y(t) = 0 \\
		\dot{y}(t) + \left(\alpha-\frac{1}{\beta}\right)x(t) + \frac{1}{\beta}y(t) = 0 
	\end{cases}
\end{equation}
More precisely, if $(x(t),y(t))_{t\geq 0}$ is a solution of \eqref{HBHDfirstorder} and $y\in \Cb^{2}([0,+\infty))$, then $x$ is a solution to \eqref{eq:DIN} (see \cite[Theorem $6.1$]{alvarez2002second}). Note however, that the system \eqref{HBHDfirstorder}, still makes sense for a larger class of functions $F$, since the twice differentiable character of $F$ is no longer needed.

In this paragraph we extend the previous results for the system \eqref{HBHDfirstorder} for functions that are not necessarily twice continuously differentiable. In fact, in order to avoid repeatability of the arguments, we directly present the results and their associated proofs for the perturbed version of \eqref{HBHDfirstorder1}:
\begin{equation}
	\begin{cases}{}\label{HBHDfirstorder inexact}\tag{g-DIN-p}
		\dot{x}(t) +\beta\nabla F(x(t)) + \left(\alpha-\frac{1}{\beta}\right)x(t) + \frac{1}{\beta}y(t) = 0 \\
		\dot{y}(t) + \left(\alpha-\frac{1}{\beta}\right)x(t) + \frac{1}{\beta}y(t) - \beta g(t) = 0 \\
		x(0)=x_{0} ~ , \quad y(0)= (1-\alpha\beta)x_{0}  -\beta v_{0}- \beta^{2}\nabla F(x_{0})
	\end{cases}
\end{equation}
where $g:\hilbert\to\R$, such that $g(\cdot)\in L^{1}_{loc}(\hilbert)$ encodes the perturbation error. 

The next two Theorems provide stability estimates for functions satisfying condition \ref{PL} with $q=2$ and $q\in(1,2)$ for the system \eqref{HBHDfirstorder inexact}, as analogues  of Theorems \ref{basicteo2inexact} and \ref{basicteo2inexact q} (respectively). 

%

\begin{theorem}\label{basicteorates firstorder}
Let $F:\hilbert\to\R$ be a function bounded from below and differentiable with locally Lipschitz gradient satisfying \hyperref[PL]{$\mathcal{L}(2)$} and $\left(x(t),y(t)\right)_{t\geq0}$ be the solution-trajectory of \eqref{HBHDfirstorder inexact} satisfying the following:\\
$i)$ There exists some $t_{0}>0$, such that  $\left(x(t)\right)_{t\geq t_{0}} \in \Omega$,  where $\Omega$ is a neighborhood of $\bar{x}\in \crit{F}$, that $F$ satisfies $\Lb(2)$ with $\mu>0$.

Then the following estimate holds:
\begin{equation}
	F(x(t))-F(\bar{x}) \leq \left(\sqrt{M_{0}}+\frac{\Jb(t)}{\sqrt{2C(\alpha,\beta)}}\right)^{2}e^{-\Rb(\alpha,\beta)(t)}
\end{equation} 
with $\Rb(\alpha,\beta)$ and $C(\alpha,\beta)$ as given in \eqref{rate factor on alphabeta} and \eqref{constant on alphabeta} (respectively) of Theorem \ref{basicteorates}
and $\Jb(t)=\int_{t_{0}}^{t}e^{\frac{\Rb(\alpha,\beta)s}{2}}\norm{g(s)}ds$, $M_{0}=\left(F(x(t_{0}))-F(\bar{x}) +\frac{1}{2C(\alpha,\beta)}\norm{\beta\nabla F(x(t_{0}))+\dot{x}(t_{0})}^{2}\right)e^{\Rb(\alpha,\beta)t_{0}}$.

In addition, if $F(x(t))\geq F(\bar{x})$,  for all $t\geq t_{0}$), then for any $\varepsilon>0$, it holds:
\begin{equation}\label{estimate grad basic teo first}
	\int_{t_{0}}^{+\infty}e^{(\Rb(\alpha,\beta)-\varepsilon)t}\norm{\nabla F(x(t))}^2dt = \bigof{\left(\int_{t_{0}}^{+\infty}e^{\frac{(\Rb(\alpha,\beta)-\varepsilon)s}{2}}\norm{g(s)}ds\right)^{2}}
\end{equation} 
\end{theorem}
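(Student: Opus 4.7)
}

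The natural plan is to reduce the first-order system \eqref{HBHDfirstorder inexact} to the second-order perturbed system \eqref{eq:DINperturbed} and invoke Theorem \ref{basicteo2inexact}. Indeed, subtracting the two equations of \eqref{HBHDfirstorder inexact} yields
\begin{equation*}
\dot y(t) = \dot x(t) + \beta\nabla F(x(t)) + \beta g(t),
\end{equation*}
while the first equation gives the algebraic identity
\begin{equation*}
y(t) = (1-\alpha\beta)x(t) - \beta\dot x(t) - \beta^{2}\nabla F(x(t)).
\end{equation*}
Differentiating this identity and matching it with the second equation of \eqref{HBHDfirstorder inexact} produces exactly \eqref{eq:DINperturbed}, \emph{provided} $F$ is $C^{2}$ and $(x,y)$ enjoys the requisite regularity. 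So under additional smoothness the result follows directly from Theorem \ref{basicteo2inexact}.

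Under the weaker assumption that $\nabla F$ is only locally Lipschitz, the first step is to bypass the appeal to $\nabla^{2}F$ by designing a Lyapunov functional that depends only on first-order information. Guided by the bound in Theorem \ref{basicteo2inexact}, the candidate is
\begin{equation*}
\Lb(t) \defeq F(x(t)) - F(\bar x) + \frac{1}{2C(\alpha,\beta)\beta^{2}}\norm{(1-\alpha\beta)x(t) - y(t)}^{2},
\end{equation*}
which, via the first equation of \eqref{HBHDfirstorder inexact}, equals $F(x(t))-F(\bar x) + \tfrac{1}{2C(\alpha,\beta)}\norm{\beta\nabla F(x(t))+\dot x(t)}^{2}$, matching the quantity $M_{0}$ (at $t=t_{0}$) in the statement. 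The plan is then to differentiate $\Lb$ along trajectories of \eqref{HBHDfirstorder inexact}, using $\frac{d}{dt}F(x(t)) = \inner{\nabla F(x(t))}{\dot x(t)}$ (which requires only $C^{1}$ regularity) and the explicit expressions for $\dot x$ and $\dot y$ provided by the system, to derive a differential inequality of the form
\begin{equation*}
\dot\Lb(t) \leq -\Rb(\alpha,\beta)\,\Lb(t) + \tfrac{1}{\sqrt{2C(\alpha,\beta)}}\norm{g(t)}\sqrt{2\Lb(t)}\cdot (\text{const})
\end{equation*}
on the localization region $\Omega$, exploiting $\Lb(2)$ at precisely the same point in the computation where it enters the proof of Theorem \ref{basicteorates}. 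The delicate part is the cross-terms between $\dot x(t)$ and $\beta\nabla F(x(t))$: these are arranged by completing the square in $\beta\nabla F + \dot x$ and using the case analysis on $(\alpha,\beta)$ that produces the two branches of \eqref{rate factor on alphabeta} and \eqref{constant on alphabeta}.

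Once this differential inequality is in place, setting $\Phi(t) \defeq e^{\Rb(\alpha,\beta)t/2}\sqrt{\Lb(t)}$ linearizes it into $\dot\Phi(t) \leq C'\,e^{\Rb(\alpha,\beta)t/2}\norm{g(t)}$; integrating from $t_{0}$ to $t$ gives the desired bound with the running integral $\Jb(t)$ and delivers \eqref{estimate gap perturbed 2 strong}-type decay for $F(x(t))-F(\bar x)$. The integral estimate \eqref{estimate grad basic teo first} on $\norm{\nabla F(x(t))}^{2}$ is then obtained by integrating the differential inequality before discarding the gradient term: rearranging $\dot\Lb(t) + \Rb(\alpha,\beta)\Lb(t) + c\norm{\nabla F(x(t))}^{2} \leq C''\norm{g(t)}\sqrt{\Lb(t)}$, multiplying by $e^{(\Rb(\alpha,\beta)-\varepsilon)t}$, and integrating, while using the already-established exponential decay of $\Lb(t)$ to bound the residual term on the right-hand side. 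The main obstacle is the case-by-case tuning that yields exactly the coefficients in \eqref{rate factor on alphabeta}--\eqref{constant on alphabeta} without invoking $\nabla^{2}F$; the Hessian term that appears in the second-order Lyapunov derivative is replaced, via the algebraic elimination above, by an expression in $y$, $x$, and $\nabla F(x)$, which is what makes the analysis go through in the $C^{1}$ setting.
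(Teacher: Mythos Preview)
Your proposal is correct and follows essentially the same route as the paper. The paper defines the Lyapunov energy
\[
U(t)=a\bigl(F(x(t))-F(\bar x)\bigr)+\tfrac12\norm{(\alpha-\tfrac1\beta)x(t)+\tfrac1\beta y(t)}^{2},
\]
which is a scalar multiple of your $\Lb$ (since $(\alpha-\tfrac1\beta)x+\tfrac1\beta y=-\tfrac1\beta[(1-\alpha\beta)x-y]$), differentiates it using only the two first-order equations (so no Hessian appears), applies $\Lb(2)$ to get $\dot U\le -RU+\norm{g}\norm{(\alpha-\tfrac1\beta)x+\tfrac1\beta y}$, and then uses Gronwall followed by Bihari's lemma with $\omega(u)=\sqrt{2u}$, which is exactly your substitution $\Phi=e^{Rt/2}\sqrt{\Lb}$. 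The only cosmetic difference is that the paper keeps the weight $a$ free and recycles Lemmas~\ref{lemma lyapunov2}--\ref{lemma lyapunov3} at the end to recover $\Rb(\alpha,\beta)$ and $C(\alpha,\beta)$, whereas you fix $a=C(\alpha,\beta)$ from the outset; for the gradient integral \eqref{estimate grad basic teo first} both approaches need to perturb $a$ slightly to make the coefficient of $\norm{\nabla F}^{2}$ strictly negative, which is what your ``before discarding the gradient term'' step amounts to.
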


\begin{theorem}\label{basicteorates firstorder q}
Let $F:\hilbert\to\R$ be a function bounded from below and differentiable with locally Lipschitz gradient satisfying \ref{PL} with $q\in(1,2)$ and $\left(x(t),y(t)\right)_{t\geq0}$ be the solution-trajectory of \eqref{HBHDfirstorder inexact} Assume that there exists some $t_{0}>0$, such that for all $t\geq t_{0}$:\\
$i)$  $\left(x(t)\right)_{t\geq t_{0}} \in \Omega$, where $\Omega$ is a neighborhood of $\bar{x}\in\crit{F}$, that $F$ satisfies $\Lb(2)$ with $\mu>0$.\\
$ii)$ $\norm{\dot{x}(t)} \leq 1$ , for all $t\geq t_{0}$.
	
Then the following estimate holds:
\begin{equation}\label{rate on alphabetaq first}
	F(x(t))-F(\bar{x}) \leq  \left(\sqrt{C_{4}} + \Ib(t)\right)^{2}t^{-\frac{q}{2-q}}
\end{equation}
with $\Ib(t)=\int_{t_{0}}^{t}s^{\frac{q}{2(2-q)}}\norm{g(s)}ds$ and\\ $C_{4} = \max\left\{\left(\frac{q\max\left\{1,\left(\frac{\alpha\beta+1}{2\mu}+\beta^{2}\right)^{\frac{2}{q}}\right\}}{(2-q)\min\{\alpha,\beta\}}\right)^{\frac{q}{2-q}} , t_{0}^{\frac{q}{2-q}}\left((\alpha\beta+1)\left(F(x(t_{0}))-F(\bar{x})\right) +\frac{\norm{\beta\nabla F(x(t_{0}))+\dot{x}(t_{0})}^{2}}{2}\right)\right\}$
\end{theorem}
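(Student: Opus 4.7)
The plan is to mirror the argument of \cref{basicteo2inexact q} while exploiting that, in the first-order system \eqref{HBHDfirstorder inexact}, the quantity $\dot x$ is directly available from the first equation and no Hessian ever appears. I would introduce the Lyapunov functional
\[
\mathcal{E}(t) := (\alpha\beta + 1)\bigl(F(x(t)) - F(\bar{x})\bigr) + \tfrac{1}{2}\|w(t)\|^{2},
\qquad w(t) := \beta\,\nabla F(x(t)) + \dot x(t),
\]
and observe that $w(t) = -(\alpha-\tfrac{1}{\beta})x(t) - \tfrac{1}{\beta}y(t)$, so $\mathcal{E}$ depends only on the state $(x,y)$ and $\nabla F(x)$, with no appeal to $\nabla^{2}F$. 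Note that $\mathcal{E}(t_{0})$ is exactly the initial-data contribution appearing in the constant $C_{4}$ of the statement.

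Differentiating along trajectories, after combining the two equations of \eqref{HBHDfirstorder inexact} to derive $\dot w = -\alpha\dot x - \nabla F(x) - g(t)$, elementary algebra gives
\[
\dot{\mathcal{E}}(t) = -\alpha\|w\|^{2} - (\alpha\beta+1)\beta\|\nabla F(x)\|^{2} + 2\alpha\beta\,\langle w,\nabla F(x)\rangle - \langle w,g(t)\rangle.
\]
The homogeneous quadratic form in $(\|w\|,\|\nabla F(x)\|)$ on the right is negative definite since its discriminant evaluates to $-4\alpha\beta<0$, and a weighted Young inequality on the cross term with the weight tuned accordingly yields
\[
\dot{\mathcal{E}}(t) + K_{0}\bigl(\|w\|^{2} + \|\nabla F(x)\|^{2}\bigr) \leq \|w\|\,\|g(t)\|,
\]
for an explicit positive constant $K_{0}=K_{0}(\alpha,\beta)$ proportional to $\min\{\alpha,\beta\}$.

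The next step is to convert this coercive dissipation into a sublinear bound on $\mathcal{E}$ itself. Using $\Lb(q)$, that is $F(x)-F(\bar{x}) \leq \frac{1}{2\mu}\|\nabla F(x)\|^{q}$, together with the standing bound $\|\dot x\|\leq 1$ (which entails $\|w\|\leq\beta\|\nabla F(x)\|+1$), a short case analysis on $\|\nabla F(x)\|\geq 1$ versus $\|\nabla F(x)\|\leq 1$ shows
\[
\mathcal{E}(t)^{2/q} \leq \max\bigl\{1,\bigl(\tfrac{\alpha\beta+1}{2\mu}+\beta^{2}\bigr)^{2/q}\bigr\}\bigl(\|\nabla F(x)\|^{2}+\|w\|^{2}\bigr).
\]
Combined with the coercivity estimate this produces the autonomous-type differential inequality
\[
\dot{\mathcal{E}}(t) + K\,\mathcal{E}(t)^{2/q} \leq \|w(t)\|\,\|g(t)\|,
\]
with $K=K_{0}/\max\{1,(\tfrac{\alpha\beta+1}{2\mu}+\beta^{2})^{2/q}\}$ chosen precisely so that $(q/((2-q)K))^{q/(2-q)}$ reproduces the first slot of the $\max$ defining $C_{4}$.

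The final step is to integrate this differential inequality. The ansatz $\mathcal{E}(t)=v(t)^{2}\,t^{-q/(2-q)}$ reduces the homogeneous inequality $\dot{\mathcal{E}}+K\mathcal{E}^{2/q}\leq 0$ to a first-order inequality for $v$ whose stationary value matches $(q/((2-q)K))^{q/(2-q)}$, while the perturbation term is absorbed, via a Grönwall-type comparison, into the integral $\Ib(t)=\int_{t_{0}}^{t}s^{q/(2(2-q))}\|g(s)\|\,ds$. Taking the maximum of the stationary solution and the initial-value bound $t_{0}^{q/(2-q)}\mathcal{E}(t_{0})$ defines $C_{4}$, and one obtains $\mathcal{E}(t)\leq(\sqrt{C_{4}}+\Ib(t))^{2}\,t^{-q/(2-q)}$. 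Since $F(x(t))-F(\bar{x})\leq \mathcal{E}(t)/(\alpha\beta+1)\leq \mathcal{E}(t)$, the stated estimate follows.

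The main technical obstacle is step two: for $q<2$ the Łojasiewicz inequality controls $F-F(\bar{x})$ only by a superlinear power of $\|\nabla F\|$, so without the uniform bound $\|\dot x\|\leq 1$ from hypothesis $(ii)$, the term $\|w\|^{2}$ could not be merged with $\|\nabla F\|^{2}$ into a single power governing $\mathcal{E}^{2/q}$. A secondary subtlety is tracking the coercivity constant $K_{0}$ through the Young split in step one so as to reproduce the exact $\min\{\alpha,\beta\}$ factor visible in $C_{4}$.
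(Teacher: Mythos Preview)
Your overall strategy coincides with the paper's: the same Lyapunov functional $U(t)=(\alpha\beta+1)(F(x(t))-F(\bar x))+\tfrac12\|w(t)\|^2$ with $w=\beta\nabla F(x)+\dot x=-(\alpha-\tfrac1\beta)x-\tfrac1\beta y$, the same target inequality $\dot U+K\,U^{2/q}\le \|w\|\,\|g\|$, and the same integration via a nonlinear Gr\"onwall argument followed by a Bihari-type bootstrap on $\|w\|$. So the architecture is correct.

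Where you diverge from the paper is in the choice of ``coordinates'' for the dissipation and the upper bound on $U^{2/q}$: you insist on the pair $(\|w\|,\|\nabla F(x)\|)$, which forces a cross term $2\alpha\beta\langle w,\nabla F(x)\rangle$ and a Young split, whereas the paper simply rewrites $\dot U=-\beta\|\nabla F(x)\|^2-\alpha\|\dot x\|^2-\langle w,g\rangle$ (your formula and this one are algebraically identical). In the paper's variables the coercivity constant is \emph{exactly} $\min\{\alpha,\beta\}$, with no further work. By contrast, your quadratic-form route only delivers the smallest eigenvalue of $\begin{pmatrix}\alpha & -\alpha\beta\\ -\alpha\beta & (\alpha\beta+1)\beta\end{pmatrix}$, which is strictly smaller than $\min\{\alpha,\beta\}$ in general (e.g.\ $\alpha=\beta=1$ gives $(3-\sqrt5)/2\approx 0.38$), so the stated $C_4$ cannot be recovered that way.

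A related issue affects your bound on $U^{2/q}$. The constant $\max\{1,(\tfrac{\alpha\beta+1}{2\mu}+\beta^2)^{2/q}\}$ is precisely the paper's constant for the pair $(\|\dot x\|,\|\nabla F(x)\|)$, obtained by first expanding $\tfrac12\|w\|^2\le \beta^2\|\nabla F(x)\|^2+\|\dot x\|^2$ and then using \emph{both} $\|\dot x\|\le 1$ and (without loss of generality, from $x(\cdot)\in\Omega$ bounded and $\nabla F$ locally Lipschitz) $\|\nabla F(x)\|\le 1$ to replace squares by $q$-th powers. Your case analysis in terms of $(\|w\|,\|\nabla F(x)\|)$ does not give that same constant, because you only have $\|w\|\le \beta\|\nabla F(x)\|+1$, not $\|w\|\le 1$; so $\|w\|^2\le \|w\|^q$ need not hold. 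In short: switch to $(\|\dot x\|,\|\nabla F(x)\|)$ for both steps, and the constants in $C_4$ fall out immediately, exactly as in the statement.
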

\begin{remark}[Extension to the non-smooth setting \cite{castera2021inertial}] The results stated in Theorems \ref{basicteorates firstorder} and \ref{basicteorates firstorder q} can be also extended to a non-smooth setting, when $g\equiv0$. In particular, for a locally Lipschitz function $F$, one can consider the following differential inclusion for a.e. $t\geq0$:
		\begin{equation}
			\begin{cases}{}\label{HBHDinclusion}
				\dot{x}(t) +\beta\partial F(x(t)) + \left(\alpha-\frac{1}{\beta}\right)x(t) + \frac{1}{\beta}y(t) \ni 0 \\
				\dot{y}(t) + \left(\alpha-\frac{1}{\beta}\right)x(t) + \frac{1}{\beta}y(t) \ni 0 
			\end{cases}
		\end{equation}
		where $\partial F(x(t))$ denotes the Clarke subdifferential \cite[Definition $10.3$]{clarke2013functional}. In this case \eqref{HBHDinclusion} admits a (coupled) solution which is absolutely continuous (see \cite{aubin1999set} for a general theory and \cite{castera2021inertial} for the application to system \ref{HBHDinclusion}). If $F$ satisfies the \L ojasiewicz condition \ref{PL} (where the right-hand side in \ref{PL} is replaced with $\frac{1}{2\mu}\text{dist}\left(0 , \partial F(x)\right)^{q}$ ), then by using the generalized chain rule property (see e.g. \cite[Lemma $4$]{castera2021inertial}), we can still perform the same Lyapunov analysis described in Section \ref{sec:analysis} (equalities and inequalities will only be valid for a.e. $t\geq t_{0}$) and obtain the same conclusions of Theorems \ref{basicteorates firstorder} and \ref{basicteorates firstorder q} (with $g\equiv 0$) for the solution of \eqref{HBHDinclusion}. Nevertheless, a thorough study of the differential inclusion \eqref{HBHDinclusion} goes beyond the scope of the current work.
	\end{remark}

\PM{Put this as a future direction in the conclusions section.}

\subsection{Almost sure avoidance of strict saddle points} 
\PM{To be done: move to separate subsection at end; introduce required notions to state the theorem; avoid informal statements ("we can guarantee the convergence"... what does this mean, precisely?); simplify statements (since $\obj$ satisfies KL).} 

As we have seen in Proposition \ref{proposition basic}, under the \L ojasiewicz condition, the dynamics of \eqref{eq:DIN} are attracted to critical points.
This does not exclude attraction by saddle points, i.e. critical points that are not local minima, which makes sense, since there exist trajectories with such limit points. 
However, these trajectories are expected to represent exceptional, atypical instances in the sense that it is improbable that a ball balances on top of a hill, or even on a saddle. 
This concept of atypicality can be captured in the vanishing Lebesgue measure of initial conditions attracted by (strict) saddle points \cite{PdM82}.
Although such an investigation has previously been carried out with regard to \eqref{eq:DIN} or \eqref{HBHDfirstorder} in \cite{castera2023inertial, maulen2024inertial} and the relevant tools were developed for a discretization of DIN, there has been no presentation of the almost sure avoidance of strict saddle points in the presence of connected components of (strict) saddle points for the continuous-in-time system. 
For concreteness, we use the following definition
\begin{definition} 
    A strict saddle point is a point $x\in\mathbb{R}^d$ such that $\nabla F(x) = 0$ and $\nabla^2 F(x)$ has a negative eigenvalue. 
\end{definition}
 
The following theorem captures that, under the $C^{2}$ differentiability and the \L ojasiewicz condition, for almost all initial conditions, the solution of \eqref{eq:DIN} does not converge to strict saddle points. 
From this result and the guaranteed convergence, provided by Proposition \ref{proposition basic}, it follows that strict saddle points are collectively avoided by the dynamics of \eqref{eq:DIN}. 
Specifically, the following theorem does not require that the critical points are isolated in any way. 
\begin{theorem} 
\label{thrm:avoidance_DIN} 
    Suppose that $\mathcal{H}=\mathbb{R}^d$ and that $F$ satisfies Assumptions \ref{asm:obj} and \ref{asm:KL}. 
    Then, the set of all pairs $(x,v)$ with $x$ and $v\in\mathbb{R}^{d}$ that belong to a solution-trajectory of \eqref{eq:DIN} with a strict saddle point in their limit points is of Lebesgue measure $0$. 
\end{theorem}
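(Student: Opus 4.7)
The plan is to realize \eqref{eq:DIN} as an autonomous first-order flow on the phase space $\mathbb{R}^{2d}$ via the substitution $v=\dot{x}$, yielding
\[
\dot{x}=v, \qquad \dot{v}=-\alpha v-\beta \nabla^{2} F(x)\,v-\nabla F(x),
\]
whose equilibria are precisely the points $(\bar{x},0)$ with $\bar{x}\in\crit F$. Since $F\in C^{2}$, the right-hand side is $C^{1}$, and for every $t\ge 0$ the time-$t$ flow map $\flowmap_{t}$ is a $C^{1}$ diffeomorphism onto its image.

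The key spectral observation comes from linearizing at a strict saddle equilibrium $(\bar{x},0)$. The Jacobian is the block matrix
\[
J=\begin{pmatrix} 0 & I \\ -\nabla^{2} F(\bar{x}) & -\alpha I-\beta\nabla^{2} F(\bar{x})\end{pmatrix},
\]
and a short calculation in an eigenbasis of $\nabla^{2} F(\bar{x})$ shows that each eigenvalue $\lambda$ of $\nabla^{2} F(\bar{x})$ generates a pair of eigenvalues $\mu$ of $J$ solving $\mu^{2}+(\alpha+\beta\lambda)\mu+\lambda=0$. When $\lambda<0$, the discriminant $(\alpha+\beta\lambda)^{2}-4\lambda$ strictly exceeds $(\alpha+\beta\lambda)^{2}$, so the two roots are real and of opposite sign; in particular at least one is strictly positive. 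Hence, at every strict saddle $(\bar{x},0)$, the linearization admits an unstable eigenvalue.

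With this in hand I would invoke the center-stable manifold theorem: for each strict saddle $\bar{x}$ there exist an open neighborhood $U_{\bar{x}}\subset\mathbb{R}^{2d}$ of $(\bar{x},0)$ and a $C^{1}$ embedded submanifold $W^{cs}(\bar{x},0)\subset U_{\bar{x}}$ of codimension at least one, such that every forward trajectory remaining in $U_{\bar{x}}$ for all sufficiently large $t$ must lie in $W^{cs}(\bar{x},0)$; in particular $W^{cs}(\bar{x},0)$ has Lebesgue measure zero in $\mathbb{R}^{2d}$. Since $\mathbb{R}^{2d}$ is Lindel\"of, the cover $\{U_{\bar{x}}\}$ of the set of strict saddle equilibria admits a countable subcover $\{U_{n}\}_{n\in\N}$ with associated local center-stable manifolds $W_{n}$.

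To conclude, I would combine this with Proposition~\ref{proposition basic}\,ii)--iii): under \cref{asm:KL} any solution of \eqref{eq:DIN} whose $\omega$-limit contains a strict saddle $\bar{x}$ actually satisfies $(x(t),\dot x(t))\to(\bar{x},0)$ (convergence of $x(t)$ gives boundedness of the trajectory, hence $\dot x(t)\to 0$). Fixing any $T>0$, there then exist $n,k_{0}\in\N$ with $(x(kT),\dot x(kT))\in W_{n}$ for all $k\ge k_{0}$, so the initial condition $(x_{0},\dot x_{0})$ lies in $\flowmap_{k_{0}T}^{-1}(W_{n})$. The set of all such initial conditions is therefore contained in $\bigcup_{n,k\in\N}\flowmap_{kT}^{-1}(W_{n})$, a countable union of $C^{1}$-diffeomorphic preimages of null sets, hence itself a null set. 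The main obstacle is that \cref{asm:KL} does not force critical points to be isolated: connected components of saddles produce a nontrivial center subspace, so the hyperbolic stable manifold theorem (as used for Morse $F$ in \cite{castera2023inertial,maulen2024inertial}) does not apply directly; it is precisely the center-stable variant that preserves codimension $\ge 1$ in the presence of zero eigenvalues along such components.
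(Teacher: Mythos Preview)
Your overall strategy coincides with the paper's: recast \eqref{eq:DIN} as an autonomous first-order system, show that strict saddles of $F$ become equilibria with an unstable eigendirection, invoke a center(-stable) manifold theorem to trap locally attracted orbits in a codimension~$\ge 1$ set, pass to a countable subcover, and pull back by the flow. Your eigenvalue computation $\mu^{2}+(\alpha+\beta\lambda)\mu+\lambda=0$ is exactly the paper's Lemma~\ref{lmm:eigenvalues}, and your use of Proposition~\ref{proposition basic} to upgrade ``$\bar x$ is an $\omega$-limit point'' to ``$(x(t),\dot x(t))\to(\bar x,0)$'' is correct.

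There is, however, a genuine regularity gap in your choice of first-order reformulation. You write the phase-space system as $\dot v=-\alpha v-\beta\nabla^{2}F(x)v-\nabla F(x)$ and then assert that, since $F\in C^{2}$, the right-hand side is $C^{1}$ and the flow $\flowmap_{t}$ is a $C^{1}$ diffeomorphism. This is not true in general: the map $(x,v)\mapsto\nabla^{2}F(x)v$ has $x$-derivative involving third derivatives of $F$, so under Assumption~\ref{asm:obj} alone the vector field is only continuous, not $C^{1}$. Both the center-stable manifold theorem you invoke and the $C^{1}$-diffeomorphism property of the flow then fail to apply. The paper circumvents this precisely by working not with $(x,\dot x)$ but with the equivalent system \eqref{HBHDfirstorder}, whose vector field \eqref{eq:gDIN_f} involves only $\nabla F$ and is therefore $C^{1}$ when $F\in C^{2}$; the avoidance lemma (Lemma~\ref{lemma:avoidstrong}, essentially your argument) is then applied to \eqref{HBHDfirstorder}, and the conclusion is transported back to \eqref{eq:DIN} via the $C^{1}$ bijection $h(x,v)=(x,(1-\alpha\beta)x-\beta^{2}\nabla F(x)-\beta v)$. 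Your argument becomes correct verbatim if you assume $F\in C^{3}$, or if you reroute it through \eqref{HBHDfirstorder} as the paper does.
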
 
The proof is provided in Section \ref{subsec:avoidance_proof}.
\begin{remark}
    Actually, the proof of Theorem \ref{thrm:avoidance_DIN} reveals that the above result also holds for solution-trajectories of \eqref{HBHDfirstorder} under the same assumptions.
\end{remark}

\section{Convergence analysis}
\label{sec:analysis}

In this Section we provide the main tools needed for the proofs of the associated Theorems in Section \ref{sec:results}. All the proofs of the auxiliary lemmas are postponed in the Appendices \ref{app:aux} and \ref{app:rates}. 
For presentational and pedagogical reasons, we first give a detailed presentation of the analysis and proofs of Theorems concerning system \eqref{eq:DIN} and then provide the ones corresponding to its first-order variant \eqref{HBHDfirstorder1}, which follow the same principles.

As also mentioned in the introduction, the main analysis is based on the use of a well-chosen Lyapunov energy for the system \eqref{eq:DIN}. Let $a>0$ and $\left(x(t)\right)_{t\geq 0}$ the trajectory generated by the system \eqref{eq:DIN}. For all $t\geq 0 $, we define the following energy-function:
\begin{equation}\label{Lyapunov def}
    V(t) = 
    a(F(x(t))-F(\bar{x})) + \frac{1}{2}\norm{\beta\nabla F(x(t))+\dot{x}(t)}^{2}
\end{equation}
which plays a central role in the main analysis.

The energy $V$ as defined in \eqref{Lyapunov def} is similar to the one used in \cite{alvarez2002second} (see also \cite{castera2021inertial}), with a different a-posteriori choice for the  parameter $a$ as we shall see in the forthcoming analysis. 
Other choices of Lyapunov energies are also possible (see e.g. \cite{attouch2022first,aujol2023fast}) and lead to useful convergence properties depending on the optimization setting. In contrast with the choice made in \cite{attouch2022first}, in our case, it is important to point out  that the energy function $V$ does not explicitly depend on any critical point of $F$ (only on $F(\bar{x})$) and convexity is not crucial in the analysis.

\begin{lemma}\label{lemma general conditions}
 Let $F$ be a function satisfying Assumption \eqref{assumption basic F} and \hyperref[PL]{$\Lb(2)$}.  Let also $\left(x(t)\right)_{t\geq0}$ be the solution-trajectory of \eqref{eq:DIN}, with $\lim\limits_{t\to\infty}x(t)=\bar{x}\in \crit{F}$
 and $\left(x(t)\right)_{t\geq t_{0}} \in \Omega$, where $\Omega$ is a neighborhood of $\bar{x}$, that $F$ satisfies $\Lb(2)$ with $\mu>0$. Let also $V$ the function defined in \eqref{Lyapunov def}, with  $a\geq0$ satisfying the following condition:
 \begin{equation}\label{conditions on a/d}\tag{H$1$}
 \begin{cases}
 1-\alpha\beta \leq a \leq 1+\alpha\beta \\
 a^2 - \left((\alpha-\mu\beta)\beta +1\right)a + (1-\alpha\beta)\mu\beta^{2} \geq 0
 \end{cases}
 \end{equation} 
 and set $R=\frac{1}{\beta}\left(\alpha\beta+1-a\right)$.
 Then for all $t\geq t_{0}$, the following estimate holds
 \begin{equation}\label{decay of V}
 V(t) \leq V(t_{0})e^{-R(t-t_{0})}
 \end{equation}
 In particular 
 \begin{equation}
 F(x(t))- F(\bar{x}) \leq \left(F(x(t_{0})) - F(\bar{x}) + \frac{1}{2a}\norm{\beta\nabla F(x(t_{0}))+\dot{x}(t_{0})}^2\right)e^{-R(t-t_{0})}
 \end{equation}
 If in addition $\varepsilon(a,\alpha,\beta)=a^2 - \left((\alpha-\mu\beta)\beta +1\right)a + (1-\alpha\beta)\mu\beta^{2}>0$ (i.e. the second inequality in \eqref{conditions on a/d} holds true) and $F(x(t))\geq F(\bar{x})$ for all $t\geq t_{0}$, then 
 \begin{equation}\label{estimate grad lemma1}
 	\int_{t_{0}}^{t}e^{Rs}\norm{\nabla F(x(s))}^{2}ds \leq \frac{ 2\mu\beta\left(2a\left(F(x(t_{0})) - F(\bar{x})\right) + \norm{\beta\nabla F(x(t_{0}))+\dot{x}(t_{0})}^2\right)e^{Rt_{0}}}{2\varepsilon(a,\alpha,\beta)}
 \end{equation}
\end{lemma}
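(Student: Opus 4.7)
The plan is to carry out a direct Lyapunov energy computation along the trajectory of \eqref{eq:DIN} and then to exploit the \L ojasiewicz inequality \hyperref[PL]{$\Lb(2)$} at the scalar level to absorb the sign-indefinite terms. First, I would differentiate $V$ in time and substitute the \eqref{eq:DIN} relation $\ddot{x}+\beta\nabla^{2}F(x)\dot{x}=-\alpha\dot{x}-\nabla F(x)$ inside the inner product $\langle\beta\nabla F(x)+\dot{x},\,\ddot{x}+\beta\nabla^{2}F(x)\dot{x}\rangle$. The key observation here is that the second-order information $\nabla^{2}F(x)\dot{x}$ is absorbed automatically into the derivative of $\nabla F(x(t))$, and no local Lipschitz assumption on $\nabla^{2}F$ is required for a pointwise calculation. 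A short expansion yields
\[
\dot{V}(t)=(a-\alpha\beta-1)\langle\nabla F(x(t)),\dot{x}(t)\rangle-\beta\|\nabla F(x(t))\|^{2}-\alpha\|\dot{x}(t)\|^{2}.
\]

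Next, I would look for $R>0$ so that $\dot{V}+RV\le 0$. Expanding $RV$ produces a quadratic form in $(\nabla F(x),\dot{x})$ plus an $aR(F(x)-F(\bar{x}))$ remainder. The choice $R=(\alpha\beta+1-a)/\beta$ kills the cross term $\langle\nabla F,\dot{x}\rangle$ exactly, which explains the a-posteriori definition of $R$ in the statement. To handle the leftover $aR(F(x)-F(\bar{x}))$, I would apply \hyperref[PL]{$\Lb(2)$} in the form $F(x)-F(\bar{x})\le\tfrac{1}{2\mu}\|\nabla F(x)\|^{2}$ (valid on $\Omega$, which is where the trajectory lives for $t\ge t_{0}$), yielding
\[
\dot{V}(t)+R\,V(t)\le\Big(\tfrac{aR}{2\mu}+\tfrac{R\beta^{2}}{2}-\beta\Big)\|\nabla F(x(t))\|^{2}+\Big(\tfrac{R}{2}-\alpha\Big)\|\dot{x}(t)\|^{2}.
\]
The coefficient of $\|\dot{x}\|^{2}$ is non-positive iff $R\le 2\alpha$, which is equivalent to $a\ge 1-\alpha\beta$, while a short algebraic manipulation using $R=(\alpha\beta+1-a)/\beta$ shows that the coefficient of $\|\nabla F\|^{2}$ equals $-\varepsilon(a,\alpha,\beta)/(2\mu\beta)$ with $\varepsilon(a,\alpha,\beta)=a^{2}-((\alpha-\mu\beta)\beta+1)a+(1-\alpha\beta)\mu\beta^{2}$, so the second condition in \eqref{conditions on a/d} is precisely what is needed. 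The upper bound $a\le 1+\alpha\beta$ only serves to guarantee $R\ge 0$ so that the decay is genuinely decreasing.

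From $\dot{V}+RV\le 0$ and Gr\"onwall's inequality I would immediately deduce $V(t)\le V(t_{0})e^{-R(t-t_{0})}$. Since \hyperref[PL]{$\Lb(2)$} gives $F(x)-F(\bar{x})\le\tfrac{1}{a}V$ (noting that $\tfrac{1}{2}\|\beta\nabla F+\dot{x}\|^{2}\ge 0$ so $V\ge a(F(x)-F(\bar{x}))$), dividing by $a>0$ yields the claimed bound on $F(x(t))-F(\bar{x})$. For the gradient integral estimate, I would keep the sharper inequality $\dot{V}+RV\le -\tfrac{\varepsilon(a,\alpha,\beta)}{2\mu\beta}\|\nabla F(x)\|^{2}$, multiply by $e^{Rt}$ to obtain $\tfrac{d}{dt}(e^{Rt}V)\le -\tfrac{\varepsilon}{2\mu\beta}e^{Rt}\|\nabla F(x)\|^{2}$, integrate over $[t_{0},t]$ and discard the non-negative boundary term $e^{Rt}V(t)\ge 0$ (which is where the hypothesis $F(x(t))\ge F(\bar{x})$ enters, ensuring $V\ge 0$). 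This gives exactly \eqref{estimate grad lemma1}.

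The only delicate point is the algebraic identification of the $\|\nabla F\|^{2}$ coefficient with $-\varepsilon(a,\alpha,\beta)/(2\mu\beta)$: the terms coming from $\dot{V}$, from $\tfrac{R\beta^{2}}{2}\|\nabla F\|^{2}$ in $RV$, and from the \L ojasiewicz bound applied to $aR(F-F(\bar{x}))$ have to combine in a single quadratic in $a$ that matches the second inequality of \eqref{conditions on a/d}. Once this identity is established, everything else is standard Gr\"onwall bookkeeping.
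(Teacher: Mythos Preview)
Your proposal is correct and follows essentially the same approach as the paper: differentiate $V$, use the \eqref{eq:DIN} relation to obtain the identity $\dot{V}=(a-\alpha\beta-1)\langle\nabla F,\dot{x}\rangle-\beta\|\nabla F\|^{2}-\alpha\|\dot{x}\|^{2}$, choose $R=(\alpha\beta+1-a)/\beta$ so that the cross term disappears in $\dot{V}+RV$, apply \hyperref[PL]{$\Lb(2)$} to the residual $aR(F(x)-F(\bar{x}))$, and then read off exactly the conditions \eqref{conditions on a/d}. The only cosmetic difference is that the paper substitutes the algebraic identity $\langle\nabla F,\dot{x}\rangle=\tfrac{1}{\beta}V-\tfrac{a}{\beta}(F-F(\bar{x}))-\tfrac{\beta}{2}\|\nabla F\|^{2}-\tfrac{1}{2\beta}\|\dot{x}\|^{2}$ directly into $\dot{V}$, whereas you add $RV$ explicitly and pick $R$ to cancel the cross term; the resulting inequality and the identification of the $\|\nabla F\|^{2}$ coefficient with $-\varepsilon(a,\alpha,\beta)/(2\mu\beta)$ are identical. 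One small wording slip: the inequality $F(x)-F(\bar{x})\le\tfrac{1}{a}V$ comes from the definition of $V$ and the non-negativity of the squared norm, not from \hyperref[PL]{$\Lb(2)$}.
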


\begin{remark}
Lemma \ref{lemma general conditions} states that under the (implicit) condition \eqref{conditions on a/d}, the energy function $V$ has an exponential decay of order $R$. While it may not be clear that condition \eqref{conditions on a/d} is non-void at this point, the next Lemma (see Lemma \ref{lemma lyapunov2}) ensures the existence of a Lyapunov parameter $a \geq 0$, such that \eqref{conditions on a/d}. holds true.
\end{remark}


\begin{lemma}\label{lemma lyapunov2}
Let $V$ the function defined in \ref{Lyapunov def}, with $a$ satisfying the following conditions:
\begin{enumerate}
\item If $0<\alpha\leq 2(\sqrt{2}-1)\sqrt{\mu}$
\begin{itemize}
\item If $\beta \in [\beta_{1},\beta_{2}]$, then  
\begin{equation}\label{condition on y 1}
\max\{0,1-\alpha\beta\} \leq a \leq 1+\alpha\beta
\end{equation}
\item If $\beta\in (0,\beta_{1}]\cup[\beta_{2},+\infty)$,  then
\begin{equation}\label{condition on y 2}
\begin{cases}
&\max\{0,1-\alpha\beta\} \leq a \leq y_{-} \\
\text{OR} & \max\{0,1-\alpha\beta,y_{+}\} \leq a \leq 1+\alpha\beta
\end{cases}
\end{equation}
\end{itemize} 
\item If $\alpha>2(\sqrt{2}-1)\sqrt{\mu}$ , then 
\begin{equation}\label{condition on y 3}
\begin{cases}
&\max\{0,1-\alpha\beta\} \leq a \leq y_{-} \\
\text{OR} & \max\{0,1-\alpha\beta,y_{+}\} \leq a \leq 1+\alpha\beta
\end{cases}
\end{equation}
\end{enumerate}
\text{where: }
\begin{align}
 \beta_{1} & = \frac{1}{2\mu}\left(2\sqrt{2\mu}-\alpha - \sqrt{(a-2\sqrt{2\mu})^{2}-4\mu}\right) \label{definition beta1} \\ 
\beta_{2} &= \frac{1}{2\mu}\left(2\sqrt{2\mu}-\alpha + \sqrt{(a-2\sqrt{2\mu})^{2}-4\mu}\right) \label{definition beta2} \\
\text{and } \quad y_{\pm}& =\frac{1}{2}\left((\alpha-\mu\beta)\beta+1 \pm \sqrt{\left((\alpha-\mu\beta)\beta+1\right)^2 - 4(1-\alpha\beta)\mu\beta^2}\right) \label{definition ypm}
\end{align}
Then, condition \eqref{conditions on a/d} in Lemma \ref{lemma general conditions} is satisfied. More precisely, under the same assumptions of Lemma \ref{lemma general conditions}, if the parameters $(\alpha,\beta,a)\in \R_{+}^{3}$ satisfy one of the conditions \eqref{condition on y 1}, \eqref{condition on y 2} or \eqref{condition on y 3}, then the following estimate holds:
\begin{equation}\label{rate V lemma 2}
	V(t) \leq V(t_{0})e^{-\frac{1}{\beta}\left(1+\alpha\beta-a\right)(t-t_{0})}
\end{equation}
In particular:
\begin{equation}\label{objectifunctionvaluesgaplemma 2}
	F(x(t))- F(\bar{x}) \leq \left(F(x(t_{0})) - F(\bar{x}) + \frac{1}{2a}\norm{\beta\nabla F(x(t_{0}))+\dot{x}(t_{0})}^2\right)e^{-\frac{1}{\beta}\left(1+\alpha\beta-a\right)(t-t_{0})}
\end{equation}
Here the convention $[0,y_{-}]=\emptyset$, if $y_{-}<0$ is used.
\end{lemma}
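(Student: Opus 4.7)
The plan is to verify that each of the three parameter regimes listed in the lemma implies condition \eqref{conditions on a/d} of Lemma \ref{lemma general conditions}. Once this is established, the decay estimate \eqref{rate V lemma 2} and the gap bound \eqref{objectifunctionvaluesgaplemma 2} follow immediately by Lemma \ref{lemma general conditions} with $R = (\alpha\beta + 1 - a)/\beta$. The linear piece $\max\{0,\,1-\alpha\beta\} \leq a \leq 1+\alpha\beta$ is manifestly built into each of \eqref{condition on y 1}, \eqref{condition on y 2}, and \eqref{condition on y 3}, so the entire effort reduces to checking that the quadratic
\[
    p(a) \defeq a^{2} - \bigl((\alpha-\mu\beta)\beta+1\bigr)a + (1-\alpha\beta)\mu\beta^{2} \geq 0.
\]

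The key algebraic step, which I would carry out first, is to rewrite the discriminant of $p$ in $a$ in the compact form
\[
    \Delta(\beta) \defeq \bigl((\alpha-\mu\beta)\beta+1\bigr)^{2} - 4(1-\alpha\beta)\mu\beta^{2} = \bigl(\mu\beta^{2}+\alpha\beta+1\bigr)^{2} - 8\mu\beta^{2},
\]
which can be checked by straightforward expansion and matching like terms in $\beta$. Since for $\alpha,\beta,\mu>0$ the quantity $\mu\beta^{2}+\alpha\beta+1$ is positive, $\Delta(\beta) \leq 0$ is equivalent to $\mu\beta^{2} + (\alpha - 2\sqrt{2\mu})\beta + 1 \leq 0$. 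This is a quadratic in $\beta$ with positive leading coefficient, whose own discriminant is $(\alpha - 2\sqrt{2\mu})^{2} - 4\mu$, and thus it can have non-positive values in $\beta > 0$ only when $\alpha \leq 2(\sqrt{2}-1)\sqrt{\mu}$ (the alternative $\alpha \geq 2(\sqrt{2}+1)\sqrt{\mu}$ yields two negative roots and no overlap with $\beta>0$). In that regime its two real roots are precisely the $\beta_{1}, \beta_{2}$ of \eqref{definition beta1}--\eqref{definition beta2}, and both are positive since $2\sqrt{2\mu}-\alpha > \sqrt{(\alpha - 2\sqrt{2\mu})^{2} - 4\mu}$.

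The three cases now unfold mechanically. In case (1) with $\beta \in [\beta_{1},\beta_{2}]$, the discriminant $\Delta(\beta)$ is non-positive so $p(a) \geq 0$ for every real $a$; no constraint beyond \eqref{condition on y 1} is needed. In case (1) with $\beta\in(0,\beta_{1}]\cup[\beta_{2},+\infty)$, and in all of case (2), one has $\Delta(\beta) > 0$, so $p$ has the two real roots $y_{\pm}$ given by \eqref{definition ypm}, and $p(a) \geq 0$ holds iff $a \leq y_{-}$ or $a \geq y_{+}$; intersecting with the linear window $[\max\{0,1-\alpha\beta\},\,1+\alpha\beta]$ yields exactly \eqref{condition on y 2} and \eqref{condition on y 3}.

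The main obstacle is the clean rewriting of $\Delta(\beta)$ and the subsequent root analysis of the auxiliary quadratic in $\beta$; in particular one must verify positivity of $\beta_{1}$ to guarantee that the interval $[\beta_{1},\beta_{2}]$ is contained in $(0,+\infty)$, and one must confirm that when $\alpha > 2(\sqrt{2}-1)\sqrt{\mu}$ the locus $\{\Delta(\beta)\leq 0\}\cap(0,+\infty)$ is empty. Once these algebraic points are settled, the invocation of Lemma \ref{lemma general conditions} is immediate and delivers both \eqref{rate V lemma 2} and \eqref{objectifunctionvaluesgaplemma 2} with the stated exponent $\frac{1}{\beta}(1+\alpha\beta-a)$.
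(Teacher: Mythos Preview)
Your proposal is correct and follows essentially the same route as the paper: verify that the linear constraint $\max\{0,1-\alpha\beta\}\leq a\leq 1+\alpha\beta$ is built in, reduce to the sign of the quadratic $p(a)$, analyze its discriminant in $\beta$, and then invoke Lemma~\ref{lemma general conditions}. The paper carries out the discriminant analysis via an auxiliary Lemma~\ref{lemma binomial} that expands the discriminant to the quartic $P_{4}(\beta)=\mu^{2}\beta^{4}+2\mu\alpha\beta^{3}+(\alpha^{2}-6\mu)\beta^{2}+2\alpha\beta+1$ and then factors it, whereas your difference-of-squares rewriting $\Delta(\beta)=(\mu\beta^{2}+\alpha\beta+1)^{2}-8\mu\beta^{2}$ reaches the same roots $\beta_{1},\beta_{2}$ more directly; the two arguments are otherwise identical.
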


In the following Lemma we provide an explicit formula for the maximal value of the geometric factor $\frac{1}{\beta}\left(1+\alpha\beta-a\right)$ in the estimate \eqref{objectifunctionvaluesgaplemma 2} depending on the parameters $\alpha$ and $\beta$.

\begin{lemma}\label{lemma lyapunov3}
	Let $\alpha$, $\beta$ two non-negative numbers and $a\geq 0$ satisfying the conditions in Lemma \ref{lemma lyapunov2}. Let also $R(a,\alpha,\beta) := \frac{1}{\beta}\left(1+\alpha\beta-a\right)$. Then:
	\begin{equation}
		\begin{aligned}
		\underset{a\geq 0}{\max} R(a,\alpha,\beta)= \begin{cases}
				2\alpha  \qquad \qquad  ~ \text{ if } ~   \left\{\alpha\leq \sqrt{\mu}\right\}\&  \left\{\beta\in[\frac{\alpha}{\mu},\frac{1}{\alpha}]\right\} &  \\
				\\
				\frac{1}{2\beta}\left((\alpha+\mu\beta)\beta+1 - \sqrt{\left((\alpha-\mu\beta)\beta+1\right)^{2}-4(1-\alpha\beta)\mu\beta^{2}}\right)   & \\ 
				\qquad \qquad \qquad  ~ \text{ if } ~   \left\{  \alpha > 0\right\} \& \left\{\beta\in \left(0, \frac{\alpha}{\mu}\right)\cup \left(\frac{1}{\alpha},+\infty\right) \right\} &
			\end{cases}
		\end{aligned}
	\end{equation}
with 		\begin{equation}
	\begin{aligned}
		\underset{a\geq 0}{\argmax} R(a,\alpha,\beta)= \begin{cases}
			1-\alpha\beta \qquad \qquad  ~ \text{ if } ~   \left\{\alpha\leq \sqrt{\mu}\right\}\&  \left\{\beta\in[\frac{\alpha}{\mu},\frac{1}{\alpha}]\right\} &  \\
			\\
\frac{1}{2}\left((\alpha-\mu\beta)\beta+1 + \sqrt{\left((\alpha-\mu\beta)\beta+1\right)^{2}-4(1-\alpha\beta)\mu\beta^{2}}\right)  & \\ 
			\qquad \qquad \qquad  ~ \text{ if } ~   \left\{  \alpha > 0\right\} \& \left\{\beta\in \left(0, \frac{\alpha}{\mu}\right)\cup \left(\frac{1}{\alpha},+\infty\right) \right\} &
		\end{cases}
	\end{aligned}
\end{equation}
\end{lemma}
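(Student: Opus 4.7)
The proof will exploit the fact that for $\beta > 0$ the map $a \mapsto R(a,\alpha,\beta) = \frac{1}{\beta}(1+\alpha\beta-a)$ is affine and strictly decreasing in $a$. Maximizing $R$ over the feasible set provided by Lemma \ref{lemma lyapunov2} therefore reduces to minimizing $a$ over that same set, and the argmax claimed in the statement will correspond to the minimizer.

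The linchpin of the analysis is the algebraic identity
\[
Q(1-\alpha\beta) \;=\; 2\beta(1-\alpha\beta)(\mu\beta-\alpha),
\]
where $Q(a) \defeq a^{2} - ((\alpha-\mu\beta)\beta+1)a + (1-\alpha\beta)\mu\beta^{2}$ is the quadratic whose non-negativity encodes the extra constraint appearing in \eqref{condition on y 2} and \eqref{condition on y 3}; its roots are precisely $y_{\pm}$. The identity follows by pulling $(1-\alpha\beta)$ out as a common factor after substitution, and the sign of $Q(1-\alpha\beta)$ will cleanly separate the two regimes of the statement.

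In the first regime ($\alpha \leq \sqrt{\mu}$ and $\beta \in [\alpha/\mu, 1/\alpha]$), both $1-\alpha\beta$ and $\mu\beta-\alpha$ are non-negative, so $Q(1-\alpha\beta) \geq 0$. Hence the candidate $a = 1-\alpha\beta$ satisfies the quadratic constraint (automatically when $\Delta < 0$, and via the above identity when $\Delta \geq 0$, placing it in the lower branch $[\max\{0,1-\alpha\beta\},y_{-}]$). Since $a = 1-\alpha\beta$ already realizes the floor $\max\{0,1-\alpha\beta\}$ of the feasible interval, it is the minimizer, and substitution gives $R = 2\alpha$. In the complementary regime ($\alpha > 0$ and $\beta \in (0,\alpha/\mu)\cup(1/\alpha,+\infty)$), exactly one of $1-\alpha\beta$ or $\mu\beta-\alpha$ is strictly negative, so $Q(1-\alpha\beta) < 0$. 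This forces $y_{-} < 1-\alpha\beta < y_{+}$, so the lower branch of the feasible set collapses against $\max\{0,1-\alpha\beta\}$, and the minimum of $a$ is attained at the left endpoint $y_{+}$ of the upper branch. A direct verification that $y_{+} \leq 1+\alpha\beta$, via the telescoping identity $(1+\alpha\beta+\mu\beta^{2})^{2} - \Delta = 8\mu\beta^{2} \geq 0$, ensures that $y_{+}$ is indeed admissible, and evaluating $R$ at $a = y_{+}$ yields the announced formula $\frac{1}{2\beta}\bigl((\alpha+\mu\beta)\beta+1-\sqrt{\Delta}\bigr)$.

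The main obstacle will be bookkeeping rather than analysis: Lemma \ref{lemma lyapunov2} partitions parameter space into three cases according to $\alpha$ and $\beta$ (through the auxiliary quantities $\beta_{1},\beta_{2}$), while the two regimes of Lemma \ref{lemma lyapunov3} cut across this partition. Once the factorization of $Q(1-\alpha\beta)$ and the telescoping formula for $\Delta$ are in hand, the case analysis reduces to tracking signs of $(1-\alpha\beta)$ and $(\mu\beta-\alpha)$, and the two cases of the statement align exactly with the two possible sign patterns. One final bookkeeping step is to confirm that in the second regime the discriminant $\Delta$ is strictly positive (so that $y_{\pm}$ are real); this follows by viewing $\Delta$ as a quadratic in $\beta$ and checking that the sub-interval on which $\Delta<0$ is contained in $[\alpha/\mu,1/\alpha]$, hence disjoint from the second regime.
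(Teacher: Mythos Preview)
Your approach is genuinely different from the paper's and, where it works, considerably more efficient. The paper proceeds by an exhaustive case analysis: it follows the three cases of Lemma~\ref{lemma lyapunov2}, introduces a cascade of auxiliary sets ($C_{-}^{1}, C_{-}^{2}, H_{\pm}^{1}, H_{\pm}^{2}$) and thresholds ($\beta_{0},\beta_{3},\beta_{4}$), characterizes each explicitly, and only at the end reassembles them into the two regions $\mathcal{F}_{-}$ and $\mathcal{F}_{++}$ that appear in the statement. Your factorization $Q(1-\alpha\beta)=2\beta(1-\alpha\beta)(\mu\beta-\alpha)$, together with the telescoping identity $(1+\alpha\beta+\mu\beta^{2})^{2}-\Delta=8\mu\beta^{2}$, bypasses almost all of this by testing the candidate $a=1-\alpha\beta$ directly against the quadratic constraint.

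There is, however, a genuine gap in the second regime. Your claim that ``exactly one of $1-\alpha\beta$ or $\mu\beta-\alpha$ is strictly negative'' is equivalent to $\beta\in\bigl(0,\min\{\tfrac{\alpha}{\mu},\tfrac{1}{\alpha}\}\bigr)\cup\bigl(\max\{\tfrac{\alpha}{\mu},\tfrac{1}{\alpha}\},\infty\bigr)$; this coincides with the second regime only when $\alpha\leq\sqrt{\mu}$. When $\alpha>\sqrt{\mu}$ the intervals $(0,\alpha/\mu)$ and $(1/\alpha,\infty)$ overlap, and for $\beta\in(1/\alpha,\alpha/\mu)$ \emph{both} factors are negative, so $Q(1-\alpha\beta)>0$ and your deduction $y_{-}<1-\alpha\beta<y_{+}$ fails. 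The conclusion (minimal feasible $a$ equals $y_{+}$) is still correct there, but for a different reason: since $\beta>1/\alpha$ the floor is $\max\{0,1-\alpha\beta\}=0$, and $Q(0)=(1-\alpha\beta)\mu\beta^{2}<0$ forces $y_{-}<0<y_{+}$, so the lower branch is empty by the convention in Lemma~\ref{lemma lyapunov2}. The clean fix is to split on the sign of $1-\alpha\beta$: if $1-\alpha\beta\geq 0$ your argument via $Q(1-\alpha\beta)$ goes through; if $1-\alpha\beta<0$, use $Q(0)<0$ instead. (This also makes your final paragraph unnecessary: once $Q$ takes a strictly negative value somewhere, $\Delta>0$ is automatic. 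Incidentally, $\Delta$ is quartic in $\beta$, not quadratic.)
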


We are now ready to give the full proof of Theorem \ref{basicteorates}

\begin{proof}[\textbf{Proof of Theorem \ref{basicteorates}}]
	The proof of Theorem \ref{basicteorates}, follows by combining Lemmas \ref{lemma general conditions}, \ref{lemma lyapunov2} and \ref{lemma lyapunov3}. More precisely, for all $a$ satisfying the conditions in Lemma \ref{lemma lyapunov2}, from \eqref{objectifunctionvaluesgaplemma 2} we have:
	\begin{equation}\label{bound on F in the proof of Thm}
		F(x(t))- F(\bar{x}) \leq \left(F(x(t_{0})) - F(\bar{x}) + \frac{1}{2a}\norm{\beta\nabla F(x(t_{0}))+\dot{x}(t_{0})}^2\right)e^{-\frac{1}{\beta}\left(1+\alpha\beta-a\right)(t-t_{0})},
	\end{equation}
	and in particular
		\begin{equation}\label{bound on F in the proof of Thm2}
		F(x(t))- F(\bar{x}) \leq \left(F(x(t_{0})) - F(\bar{x}) + \frac{1}{2a_{\ast}}\norm{\beta\nabla F(x(t_{0}))+\dot{x}(t_{0})}^2\right)e^{-\Rb(\alpha,\beta)(t-t_{0})},
	\end{equation}
with  $\Rb(\alpha,\beta) :=  \max\left\{\frac{1}{\beta}(\alpha\beta+1-a)~:~ a\sim \text{ Lemma \ref{lemma lyapunov2}}\right\}$ and $a_{\ast}=\inf\{a~:~a \sim \text{ Lemma \ref{lemma lyapunov2}}\}$. Moreover,  from Lemma \ref{lemma lyapunov3} and the definition of $\auxconst(\alpha,\beta)$ in \eqref{eq: q expression}, it follows that 
\begin{equation}
	\Rb(\alpha,\beta) =\begin{cases}
		2\alpha  \qquad \qquad \qquad \qquad  & \text{ if } ~   \left\{\alpha\leq \sqrt{\mu}\right\}\&  \left\{\beta\in[\frac{\alpha}{\mu},\frac{1}{\alpha}]\right\}   \\
		\frac{1}{2\beta}\left((1+\alpha\beta - \auxconst(\alpha,\beta) \right)   \quad  & \text{ if } ~   \left\{  \alpha > 0\right\} \& \left\{\beta\in \left(0, \frac{\alpha}{\mu}\right)\cup \left(\frac{1}{\alpha},+\infty\right) \right\} 
	\end{cases}
\end{equation}
\begin{equation}
\text{and } ~	C(\alpha,\beta) = \underset{a\geq 0}{\argmax} R(a) = \begin{cases}
			1-\alpha\beta   & \text{ if } ~   \left\{\alpha\leq \sqrt{\mu}\right\}\&  \left\{\beta\in[\frac{\alpha}{\mu},\frac{1}{\alpha}]\right\}  \\
			\frac{1+\alpha\beta + \auxconst(\alpha,\beta)}{2}   \quad   & \text{ if } ~   \left\{  \alpha > 0\right\} \& \left\{\beta\in \left(0, \frac{\alpha}{\mu}\right)\cup \left(\frac{1}{\alpha},+\infty\right) \right\} 
		\end{cases}
	\end{equation}

For the last part of the Theorem, concerning the estimate on $\norm{\nabla F(x(t))}$, one can observe, that if $\alpha \in (0,\sqrt{\mu})$ and $\beta\in \left(\frac{\alpha}{\mu},\frac{1}{\alpha}\right)$, then the binomial $P_{2}(a):= a^2 - \left((\alpha-\mu\beta)\beta +1\right)a + (1-\alpha\beta)\mu\beta^{2}$ evaluated at $a=1-\alpha\beta>0$ is strictly positive. Since $P_{2}(1-\alpha\beta)>0$, from \eqref{estimate grad lemma1} of Lemma \ref{lemma general conditions}, it follows that the function $t~\mapsto ~ e^{-2\alpha t}\norm{\nabla F(x(t))}^{2}$ is integrable in $[t_{0},+\infty)$. In the same way, for any $\varepsilon>0$ sufficiently small, for all $\alpha>0$ and $\beta\in\left(0,\frac{\alpha}{\mu}\right)\cup\left(\frac{1}{\alpha},+\infty\right)$, the binomial $P_{2}(a)$ evaluated at $a=\frac{1}{2}\left((\alpha-\mu\beta)\beta+1 + \sqrt{\left((\alpha-\mu\beta)\beta+1\right)^{2}-4(1-\alpha\beta)\mu\beta^{2}}\right)+\varepsilon$ is strictly positive (note that $y_{+}$ is the largest root of $P_{2}(a)$), thus from \eqref{estimate grad lemma1} of Lemma \ref{lemma general conditions}, it follows that the function $t~\mapsto ~ e^{-\frac{1}{\beta}(1+\alpha\beta-y_{+}+\varepsilon)t}\norm{\nabla F(x(t))}^{2}$ is integrable in $[t_{0},+\infty)$. By combining the two cases we conclude with the estimate \eqref{estimate grad basic teo}.
\end{proof}

\subsection{Convergence analysis for the perturbed system}
In this paragraph we provide the proofs of Theorems \ref{basicteo2inexact} and \ref{basicteo2inexact q} for the perturbed system \eqref{eq:DINperturbed}. 

The proof of Theorem \ref{basicteo2inexact} follows closely the one of Theorem \ref{basicteorates} and is based in the same Lyapunov function $V$ defined in \eqref{Lyapunov def}.

\begin{proof}[\textbf{Proof of Theorem \ref{basicteo2inexact}}]
	Let $(x(t))_{t\geq 0}$ be the trajectory generated by system \eqref{eq:DINperturbed}  and $V(t)$ the energy defined in \eqref{Lyapunov def}. By taking the time-derivative, for all $t\geq t_{0}$, we have:
	\begin{equation}
		\begin{aligned}
			\dot{V}(t) & = a\inner{\nabla F(x(t))}{\dot{x}(t)} + \beta\inner{\nabla^{2}F(x(t))\dot{x}(t)}{\dot{x}(t)} +\beta\inner{\nabla F(x(t))}{\ddot{x}(t)} \\ & \quad +\beta^{2} \inner{\nabla^{2}F(x(t))\dot{x}(t)}{\nabla F(x(t))} 
			+ \inner{\dot{x}(t)}{\ddot{x}(t)} \\
			& = (a-\alpha\beta -1)\inner{\nabla F(x(t))}{\dot{x}(t)} -\beta \norm{\nabla F(x(t))}^{2} -\alpha \norm{\dot{x}(t)}^{2}  -\inner{\beta\nabla F(x(t))+\dot{x}(t)}{g(t)}
		\end{aligned}
	\end{equation}
	
	By performing the exact same computations as the ones in the proof of Lemma \ref{lemma general conditions} and using condition \hyperref[PL]{$\Lb(2)$} and $\mu>0$ and the Cauchy-Schwarz inequality for the scalar product $\inner{\beta\nabla F(x(t)+\dot{x}(t))}{g(t)}$, it follows:
	\begin{equation}\label{perturbed 1}
		\begin{aligned}
			\dot{V}(t) & \leq -RV(t) +\frac{1}{2\mu}\left(aR - (a+1-\alpha\beta )\mu\beta\right)\norm{\nabla F(x(t))}^{2} +\frac{1}{2\beta}\left(1-\alpha\beta -a\right)\norm{\dot{x}(t)}^{2}\\ & \quad +\norm{\beta\nabla F(x(t))+\dot{x}(t)}\norm{g(t)}
		\end{aligned}
	\end{equation}
	where $R=\frac{1}{\beta}(1+\alpha\beta-a)$.
	
	Thus, if condition \eqref{conditions on a/d} as stated in Lemma \ref{lemma general conditions} holds true, from \eqref{perturbed 1}, it follows
	\begin{equation}\label{perturbed 2}
		\dot{V}(t)  \leq -RV(t) +\norm{\beta\nabla F(x(t)+\dot{x}(t))}\norm{g(t)}
	\end{equation}
	By using Gronw\"all's lemma (see Lemma \ref{lemmagronwallcontinuous}), from \eqref{perturbed 2}, it follows
	\begin{equation}\label{perturbed 3}
		\begin{aligned}
			V(t) & \leq V(t_{0})e^{-R(t-t_{0})} + e^{-Rt}\int_{t_{0}}^{t}e^{Rs}\norm{\beta\nabla F(x(s)+\dot{x}(s))}\norm{g(s)} ds
		\end{aligned}
	\end{equation}
		In this point, without loss of generality we may assume that $F(x(t)) \geq F(\bar{x})$, since in the case $F(x(t)) \leq F(\bar{x})$, the estimate \eqref{estimate gap perturbed 2} holds trivially true.
Thus, for all $t\in I:=\left\{t\geq t_{0}~:~F(x(t) \geq F(\bar{x}))\right\}$, from \eqref{perturbed 3}, we obtain
	\begin{equation}\label{perturbed 4}
		\begin{aligned}
			\frac{e^{Rt}}{2}\norm{\beta\nabla F(x(t)) + \dot{x}(t)}^{2} & \leq	V(t_{0})e^{Rt_{0}} + \int_{t_{0}}^{t}e^{Rs}\norm{g(s)}\norm{\beta\nabla F(x(s)+\dot{x}(s))} ds		
		\end{aligned}
	\end{equation}

	By using Lemma \ref{lemmaBihari} with $\psi(t)=\frac{e^{Rt}}{2}\norm{\beta\nabla F(x(t)) + \dot{x}(t)}^{2}$, $a=V(t_{0})e^{Rt_0}$, $k(t) = e^{\frac{Rt}{2}}\norm{g(t)}$ and $\omega(u) = \sqrt{2u}$, from \eqref{perturbed 4}, it follows:
	\begin{equation}\label{perturbed 5}
		e^{\frac{Rt}{2}}\norm{\beta\nabla F(x(t)) + \dot{x}(t)} \leq \sqrt{2V(t_{0})e^{Rt_{0}}} + \int_{t_{0}}^{t}e^{\frac{Rs}{2}}\norm{g(s)}ds
	\end{equation}
By setting $J(t)=\int_{t_{0}}^{t}e^{\frac{Rs}{2}}\norm{g(s)}ds$ and injecting the estimate \eqref{perturbed 5} into \eqref{perturbed 3}, it follows
	\begin{equation}
		\begin{aligned}
			V(t) &\leq V(t_{0})e^{-R(t-t_{0})} + e^{-Rt}\int_{t_{0}}^{t}e^{\frac{Rs}{2}}\norm{g(s)}\left(\sqrt{2V(t_{0})e^{Rt_{0}}} + J(s) \right)ds \\
			& =\left(V(t_{0})e^{Rt_{0}} + \sqrt{2V(t_{0})e^{Rt_{0}}}J(t) +\frac{1}{2}J^{2}(t) \right)e^{-Rt} = \left(\sqrt{V(t_{0})e^{Rt_{0}}} + \frac{J(t)}{\sqrt{2}}\right)^{2}e^{-Rt}
		\end{aligned}
	\end{equation} 
	which, by definition of  $V(t)$, yields
	\begin{equation}\label{perturbed 6}
		F(x(t)) - F(\bar{x}) \leq \left(\sqrt{\frac{V(t_{0})e^{Rt_{0}}}{a}} + \frac{J(t)}{\sqrt{2a}}\right)^{2}e^{-Rt}
	\end{equation}
	
	Since \eqref{perturbed 6} holds true with $R=\frac{1}{\beta}(1+\alpha\beta-a)$ under condition \eqref{conditions on a/d} of Lemma \ref{lemma general conditions}, by following the same lines of the proof of Theorem \ref{basicteorates}, and in particular using Lemmas \ref{lemma lyapunov2} and \ref{lemma lyapunov3}, it follows:
	\begin{equation}\label{perturbed 7}
		\begin{aligned}
			F(x(t)) & -  F(\bar{x}) \leq \\
			& \leq  \left( e^{\frac{\Rb(\alpha,\beta)t_{0}}{2}}\sqrt{F(x(t_{0})) - F(\bar{x}) +\frac{\norm{\beta\nabla F(x(t_{0}))+\dot{x}(t_{0})}^{2}}{2C(\alpha,\beta)}} + \frac{\Jb(t)}{\sqrt{2C(\alpha,\beta)}}\right)^{2}e^{-\Rb(\alpha,\beta) t}
		\end{aligned}
	\end{equation} 
	with $\Rb(\alpha,\beta)$ and $C(\alpha,\beta)$ as given in \eqref{rate factor on alphabeta} and \eqref{constant on alphabeta} (resp.) and $\Jb(t)= \int_{t_{0}}^{t}e^{\frac{\Rb(\alpha,\beta)s}{2}}\norm{g(s)}ds$.

	Finally, in the same way as in the proof of Theorem \ref{basicteo2inexact}, if the binomial $\varepsilon(a,\alpha,\beta)=a^2 - \left((\alpha-\mu\beta)\beta +1\right)a + (1-\alpha\beta)\mu\beta^{2}$, is strictly positive, from \eqref{perturbed 1}, if $F(x(t)) \geq F(\bar{x})$, for all $t\geq t_{0}$, it follows
	\begin{equation}\label{perturbed 2 final}
		\begin{aligned}
			\varepsilon(a,\alpha,\beta)\int_{t_{0}}^{t}e^{Rs}\norm{\nabla F(x(s)}^{2}ds & \leq e^{Rt_{0}}V(t_{0}) + \int_{t_{0}}^{t}e^{Rs}\norm{g(s)}\norm{\beta\nabla F(x(s))+\dot{x}(t)}ds	\\
			&\leq \left(\sqrt{V(t_{0})e^{Rt_{0}}} + \frac{J(t)}{\sqrt{2}}\right)^{2}
		\end{aligned}
	\end{equation}
	where in the last inequality we used the estimate \eqref{perturbed 5}.
	
	In addition, note that when  $\alpha \in (0,\sqrt{\mu})$ and $\beta\in \left(\frac{\alpha}{\mu},\frac{1}{\alpha}\right)$, the binomial $\varepsilon(a,\alpha,\beta) = a^2 - \left((\alpha-\mu\beta)\beta +1\right)a + (1-\alpha\beta)\mu\beta^{2}$ evaluated at $a=1-\alpha\beta>0$ is strictly positive. In the same way, for any $\eta>0$ sufficiently small, for all $\alpha>0$ and $\beta\in\left(0,\frac{\alpha}{\mu}\right)\cup\left(\frac{1}{\alpha},+\infty\right)$, the binomial $\varepsilon(a,\alpha,\beta)$ evaluated at $a=y_{+}=\frac{1}{2}\left((\alpha-\mu\beta)\beta+1 + \sqrt{\left((\alpha-\mu\beta)\beta+1\right)^{2}-4(1-\alpha\beta)\mu\beta^{2}}\right)+\eta$ is strictly positive (note that $y_{+}$ is the largest root of $\varepsilon(a,\alpha,\beta)$). By combining the two cases ($a=1-\alpha\beta$ and $a=y_{+}+\eta$) in \eqref{perturbed 2 final}, we conclude with the estimate \eqref{estimate grad basic teo first inexact}.
\end{proof}

Next, we provide the proof of Theorem \ref{basicteo2inexact q}, concerning functions satisfying condition \ref{PL} with $q\in(1,2)$ for the system \eqref{eq:DINperturbed}.

\begin{proof}[\textbf{Proof of Theorem \ref{basicteo2inexact q}}]
	

	First, note that without loss of generality we may assume that for all $t\geq t_{0}$, it holds $\norm{\nabla F(x(t))}\leq 1$ and $\norm{\dot{x}(t)}\leq 1$ (since $x(t)$ and $\dot{x}(t)$ are bounded by assumption and $F$ is $\Cb^{2}$).

	By taking the energy function $V$ as defined in \eqref{Lyapunov def}, associated to the trajectory generated by system \eqref{eq:DINperturbed} $(x(t))_{t\geq 0}$, with $a=\alpha\beta+1$, it follows:
	\begin{equation}
		\begin{aligned}
			V(t) & = \left(\alpha\beta+1\right)\left(F(x(t))-F(\bar{x})\right) +\beta\inner{\nabla F(x(t))}{\dot{x}(t)}+\frac{\beta^{2}}{2}\norm{\nabla F(x(t))}^{2}+\frac{1}{2}\norm{\dot{x}(t)}^{2} \\
			& \leq \left(\alpha\beta+1\right)\left(F(x(t))-F(\bar{x})\right) +\beta^{2}\norm{\nabla F(x(t))}^{2}+\norm{\dot{x}(t)}^{2} \\
			& \leq \frac{\alpha\beta+1}{2\mu}\norm{\nabla F(x(t))}^{q}  +\beta^{2}\norm{\nabla F(x(t))}^{2}+\norm{\dot{x}(t)}^{2}\\
			&\leq \left(\frac{\alpha\beta+1}{2\mu}+\beta^{2}\right)\norm{\nabla F(x(t))}^{q} +\norm{\dot{x}(t)}^{q}
		\end{aligned}
	\end{equation}
	where in the first inequality we used the Young's inequality for the product $\inner{\beta\nabla F(x(t))}{\dot{x}(t)}$, in the second one, the \L ojasiewicz condition \ref{PL} and in the third one the fact that $\norm{\nabla F(x(t))}\leq 1$ and $\norm{\dot{x}(t)}\leq 1$ and $q\in (1,2)$. 
	
	On the one hand, by using the convexity of the function $\phi(s)=s^{\frac{2}{q}}$ (since $\frac{2}{q}>1$), from the previous inequality, it follows
	\begin{equation}\label{proof q 1}
		V(t)^{\frac{2}{q}} \leq \max\left\{1,\left(\frac{\alpha\beta+1}{2\mu }+\beta^{2}\right)^{\frac{2}{q}}\right\}\left(\norm{\nabla F(x(t))}^{2} +\norm{\dot{x}(t)}^{2}\right)
	\end{equation}	
On the other hand, by taking the time-derivative of $V$ and using that $x(t)$ is a solution of \eqref{eq:DINperturbed}, it holds (see in particular relation \eqref{lyapunov basic} in the proof of Lemma \ref{lemma general conditions}, with $a=\alpha\beta+1$):
	\begin{equation}\label{proof q 2}
		\begin{aligned}
			\dot{V}(t) &= -\beta\norm{\nabla F(x(t))}^{2} -\alpha\norm{\dot{x}(t)}^{2} -\inner{\beta\nabla F(x(t)) + \dot{x}(t)}{g(t)} \\ &\leq -\min\{\alpha,\beta\}\left(\norm{\nabla F(x(t))}^{2} +\norm{\dot{x}(t)}^{2}\right)
			-\inner{\beta\nabla F(x(t))+\dot{x}(t)}{g(t)}
		\end{aligned}
	\end{equation} 
	
	From \eqref{proof q 1} and \eqref{proof q 2}, we deduce that	
	\begin{equation}\label{proof q 3}
		\begin{aligned}
			\dot{V}(t) & \leq -C_{3}V(t)^{\frac{2}{q}}  -\inner{\beta\nabla F(x(t))+\dot{x}(t)}{g(t)} \\
			& \leq -C_{3}V(t)^{\frac{2}{q}} + \norm{g(t)}\norm{\beta\nabla F(x(t))+\dot{x}(t)}
		\end{aligned}
	\end{equation}
	with $C_{3}=\frac{\min\{\alpha,\beta\}}{\max\left\{1,\left(\frac{\alpha\beta+1}{2\mu }+\beta^{2}\right)^{\frac{2}{q}}\right\}}$.	
	Applying Lemma \ref{lemma nonlinear gronwall power p} with $p=\frac{2}{q}>1$, we infer
	\begin{equation}\label{proof q 4}
		t^{\frac{q}{2-q}}V(t) \leq  C_{4}+ \int_{t_{0}}^{t}s^{\frac{q}{2-q}}\norm{g(s)}\norm{\beta\nabla F(x(s))+\dot{x}(s)}ds
	\end{equation}
	where $C_{4} = \max\left\{\left(\frac{q}{C_{3}(2-q)}\right)^{\frac{q}{2-q}} , t_{0}^{\frac{q}{2-q}}V(t_{0})\right\}$.\\
	Without loss of generality we may assume that $F(x(t)) \geq F(\bar{x})$ (since if $F(x(t)) \leq F(\bar{x})$, then \eqref{estimate gap perturbed q} holds trivially true), thus, for all $t\in \left\{t\geq t_{0}~:~F(x(t) \geq F(\bar{x}))\right\}$, from \eqref{proof q 4}, it holds
	\begin{equation}\label{proof q 5}
		\frac{t^{\frac{q}{2-q}}}{2}\norm{\beta\nabla F(x(t))+\dot{x}(t)}^{2} \leq  C_{4}+ \int_{t_{0}}^{t}s^{\frac{q}{2(2-q)}}\norm{g(s)}s^{\frac{q}{2(2-q)}}\norm{\beta\nabla F(x(s))+\dot{x}(s)}ds
	\end{equation}
	According to Lemma \ref{lemmaBihari} with $\omega(u)=\sqrt{2u}$, $k(t)=t^{\frac{q}{2(2-q)}}\norm{g(t)}$ and $\psi(t) = \frac{t^{\frac{q}{2-q}}}{2}\norm{\beta\nabla F(x(t))+\dot{x}(t)}^{2}$, from \eqref{proof q 5}, it follows:
	\begin{equation}\label{proof q 6}
		t^{\frac{q}{2(2-q)}}\norm{\beta\nabla F(x(t))+\dot{x}(t)} \leq \sqrt{2C_{4}} +\int_{t_{0}}^{t}s^{\frac{q}{2(2-q)}}\norm{g(s)}ds
	\end{equation}
	By injecting the estimate \eqref{proof q 6} in \eqref{proof q 4}, it follows
	\begin{equation}
		V(t) \leq  \left(\sqrt{C_{4}} +\frac{\Ib(t)}{\sqrt{2}}\right)^{2}t^{-\frac{q}{2-q}}
	\end{equation}
	which concludes the proof of Theorem \ref{basicteo2inexact q}, with $\Ib(t)=\int_{t_{0}}^{t}s^{\frac{q}{2(2-q)}}\norm{g(s)}ds$ and $C_{4} = \max\left\{\left(\frac{q}{C_{3}(2-q)}\right)^{\frac{q}{2-q}} , t_{0}^{\frac{q}{2-q}}V(t_{0})\right\}$.
\end{proof}

\subsection{First-order coupled system}
In this paragraph we provide the proofs of Theorems \ref{basicteorates firstorder} and \ref{basicteorates firstorder q} concerning the first-order coupled system \eqref{HBHDfirstorder inexact}. The proofs follow the same analysis as the one in Theorems \ref{basicteo2inexact} and \ref{basicteo2inexact q} and are presented in short for the sake of brevity.

\begin{proof}[\textbf{Proof of Theorem \ref{basicteorates firstorder}}]
Let $\left(x(t),y(t)\right)_{t\geq t_{0}}$ be  the coupled trajectory generated by the system \eqref{HBHDfirstorder inexact}, such that $(x(t))_{t\geq t_{0}}\in \Omega$, where $\Omega$ is a neighborhood of $\bar{x}\in \crit{F}$, such that $F$ satisfies \hyperref[PL]{$\Lb(2)$} with $\mu>0$ in $\Omega$. 
For any $a\geq0$, we define the following energy function:
\begin{equation}\label{def lyapunov U}
	U(t)=a\left(F(x(t))-F(\bar{x})\right) + \frac{1}{2}\norm{\left(\alpha-\frac{1}{\beta}\right)x(t) + \frac{1}{\beta}y(t)}^{2}
\end{equation}
By taking the derivative of $U$ with respect to time in \eqref{def lyapunov U} and using \eqref{HBHDfirstorder inexact}, it follows:
\begin{equation}\label{for the proof first order1}
	\begin{aligned}
		\dot{U}(t)
		& = - a\beta\norm{\nabla F(x(t)}^{2} -(a+\alpha\beta-1)\inner{\nabla F(x(t))}{\left(\alpha-\frac{1}{\beta}\right)x(t) + \frac{1}{\beta}y(t)} \\
		& \quad - \alpha\norm{\left(\alpha-\frac{1}{\beta}\right)x(t) + \frac{1}{\beta}y(t)}^{2} + \inner{g(t)}{\left(\alpha-\frac{1}{\beta}\right)x(t) + \frac{1}{\beta}y(t)} \\
		& = -\frac{\beta}{2}\left(a+1-\alpha\beta\right)\norm{\nabla F(x(t))}^{2} -\frac{\left(a+\alpha\beta-1\right)}{2\beta}\norm{\beta\nabla F(x(t)) + \left(\alpha-\frac{1}{\beta}\right)x(t) + \frac{1}{\beta}y(t)}^{2}  \\
		& \quad  - \frac{1}{2\beta}\left(\alpha\beta+1-a\right)\norm{\left(\alpha-\frac{1}{\beta}\right)x(t) + \frac{1}{\beta}y(t)}^{2} + \inner{g(t)}{\left(\alpha-\frac{1}{\beta}\right)x(t) + \frac{1}{\beta}y(t)}
	\end{aligned} 
\end{equation}
where in the last equality, we used the identity $\inner{u}{v} = \frac{1}{2}\norm{\sqrt{\beta}u+\frac{1}{\sqrt{\beta}}v}^{2} - \frac{\beta}{2}\norm{u}^{2}-\frac{1}{2\beta}\norm{v}^{2}$, with $u =\nabla F(x(t))$ and $v=\left(\alpha-\frac{1}{\beta}\right)x(t) + \frac{1}{\beta}y(t)$.
	
From the definition of $U$ in \eqref{def lyapunov U} and \eqref{for the proof first order1}, if $a\leq 1+\alpha\beta$, by using condition \hyperref[PL]{$\Lb(2)$}  and the Cauchy-Schwarz inequality for the scalar product $\inner{g(t)}{\left(\alpha-\frac{1}{\beta}\right)x(t) + \frac{1}{\beta}y(t)}$, it follows:
\begin{equation}\label{for the proof first order2}
	\begin{aligned}
		\dot{U}(t)
		& \leq -RU(t) -\frac{1}{2}\left(a+\alpha\beta-1\right)\norm{\sqrt{\beta}\nabla F(x(t))\frac{1}{\sqrt{\beta}}\left(\left(\alpha-\frac{1}{\beta}\right)x(t) + \frac{1}{\beta}y(t)\right)}^{2}  \\
		& \quad  -\frac{1}{2\mu}\left((a+1-\alpha\beta)\mu\beta-aR\right)\norm{\nabla F(x(t))}^{2} +\norm{g(t)}\norm{\left(\alpha-\frac{1}{\beta}\right)x(t) + \frac{1}{\beta}y(t)}
	\end{aligned}
\end{equation}
where we set $R=\frac{1}{\beta}(\alpha\beta+1-a)$.
	
From \eqref{for the proof first order2}, if we additionally impose that $a\geq 1-\alpha\beta$ and $aR\leq (a+1-\alpha\beta)\mu\beta$, it follows that $\dot{U}(t) \leq -RU(t) + \norm{g(t)}\norm{\left(\alpha-\frac{1}{\beta}\right)x(t) + \frac{1}{\beta}y(t)}$ and thus
\begin{equation}\label{U in proof 1}
	e^{Rt}U(t) \leq U(t_{0})e^{Rt_{0}} +\int_{t_{0}}^{t}e^{Rs}\norm{\beta\nabla F(x(s)+\dot{x}(s))}\norm{g(s)} ds
\end{equation}
In the same way as in the proof of Theorem \ref{basicteo2inexact}, by using Lemma \ref{lemmaBihari},  from \eqref{U in proof 1} it follows
\begin{equation}
	U(t) \leq \left(\sqrt{U(t_{0})e^{Rt_{0}}} + \frac{\int_{t_{0}}^{t}e^{\frac{Rs}{2}}\norm{g(s)}ds}{\sqrt{2}}\right)^{2}e^{-Rt}
\end{equation}
and consequently:
\begin{equation}\label{eq: geometric first in the proof}
	F(x(t))-F(\bar{x}) \leq \left(\sqrt{U(t_{0})e^{Rt_{0}}} + \frac{J(t)}{\sqrt{2}}\right)^{2}e^{-\frac{1}{\beta}(\alpha\beta+1-a)t}
\end{equation}
with $J(t)=\int_{t_{0}}^{t}e^{\frac{Rs}{2}}\norm{g(s)}ds$.

The overall imposed conditions on the parameters $a$, $\alpha$ and $\beta$ for \eqref{eq: geometric first in the proof} to hold true, are exactly the same as the ones in \eqref{conditions on a/d} of Lemma \ref{lemma general conditions}. Thus, in the same spirit as for the proof of Theorem \ref{basicteorates}, by using Lemmas \ref{lemma lyapunov2} and \ref{lemma lyapunov3}, from \eqref{eq: geometric first in the proof}, it follows that
\begin{equation}\label{bound on F in the proof of Thm3}
	F(x(t))- F(\bar{x}) \leq \frac{\left(\sqrt{U(t_{0})e^{\Rb(\alpha,\beta)t_{0}}} + \frac{\Jb(t)}{\sqrt{2}}\right)^{2}}{C(\alpha,\beta)}e^{-\Rb(\alpha,\beta)t}
\end{equation} 
with $\Rb(\alpha,\beta)$ and $C(\alpha,\beta)$ as given in \eqref{rate factor on alphabeta} and \eqref{constant on alphabeta} (resp.) and $\Jb(t)=\int_{t_{0}}^{t}e^{\frac{\Rb(\alpha,\beta)s}{2}}\norm{g(s)}ds$.

The last part of the Theorem, concerning the estimate on $\norm{\nabla F(x(t))}$ is identical with the one in the proof of Theorem \ref{basicteo2inexact} and is left to the reader.

\end{proof}

\begin{proof}[\textbf{Proof of Theorem \ref{basicteorates firstorder q}}] 
Since $x(t)$ is bounded, from the Lipschitz character of $\nabla F$, it follows that $\nabla F(x(t))$ is also bounded and without loss of generality we may assume that $\norm{\dot{x}(t)}\leq 1$ and $\norm{\nabla F(x(t))}\leq 1$ for all $t\geq t_{0}$.
	
By using \eqref{HBHDfirstorder inexact} in the definition of $U$ in \eqref{def lyapunov U}, with $a=\alpha\beta+1$, it follows:
\begin{equation}\label{eq: proof first q general}
	\begin{aligned}
		U(t)
		& \leq  \frac{\alpha\beta+1}{2\mu}\norm{\nabla F(x(t))}^{q} + \norm{\dot{x}(t)}^{2} +\beta^{2}\norm{\nabla F(x(t))}^{2}  \\
		& \leq  \left(\frac{\alpha\beta+1}{2\mu}+\beta^{2}\right)\norm{\nabla F(x(t))}^{q} + \norm{\dot{x}(t)}^{q} 
	\end{aligned}
\end{equation}
where in the first inequality we used condition \ref{PL} with $q\in(1,2)$ and the convexity of the function $\phi(s)=s^{2}$ and in the second one the fact that $\norm{\nabla F(x(t))}\leq 1$ and $\norm{\dot{x}(t)}\leq 1$ and $q\in (1,2)$. 
From \eqref{eq: proof first q general}, since $q\in(1,2)$, it follows that
\begin{equation}\label{proof q first 1}
	U(t)^{\frac{2}{q}} \leq C_{1}\left(\norm{\nabla F(x(t))}^{2} +\norm{\dot{x}(t)}^{2}\right)
\end{equation}	
with $C_{1}=\max\left\{1,\left(\frac{\alpha\beta+1}{2\mu}+\beta^{2}\right)^{\frac{2}{q}}\right\}$.
	
Moreover, by using \eqref{HBHDfirstorder inexact} in \eqref{for the proof first order1}, with $a=\alpha\beta+1$, it follows:
\begin{equation}\label{proof general q final}
	\begin{aligned}
		\dot{U}(t) &= -\beta\norm{\nabla F(x(t))}^{2} -\alpha\norm{\dot{x}(t)}^{2} -\inner{g(t)}{\dot{x}(t)+\beta\nabla F(x(t))} \\
		&\leq -C_{2}U(t)^{\frac{2}{q}}
			+\norm{g(t)}\norm{\dot{x}(t)+\beta\nabla F(x(t))}
	\end{aligned}
\end{equation}
with $C_{2}=\frac{\min\{\alpha,\beta\}}{C_{1}}$, where we used \eqref{proof q first 1} and the Cauchy-Schwarz inequality.
	
In the same way as in the proof of Theorem \ref{basicteo2inexact q}, by using Lemmas \ref{lemma nonlinear gronwall power p} and \ref{lemmaBihari}, from \eqref{proof general q final}, it follows
\begin{equation}
	t^{\frac{q}{2-q}}U(t) \leq \left(\sqrt{C_{4}}+\frac{\Ib(t)}{\sqrt{2}}\right)^{2} 
\end{equation}
which concludes the proof of \cref{basicteorates firstorder q}
with $\Ib(t)=\int_{t_{0}}^{t}s^{\frac{q}{2(2-q)}}\norm{g(s)}ds$ and\\ $C_{4} = \max\left\{\left(\frac{q\max\left\{1,\left(\frac{\alpha\beta+1}{2\mu}+\beta^{2}\right)^{\frac{2}{q}}\right\}}{(2-q)\min\{\alpha,\beta\}}\right)^{\frac{q}{2-q}} , t_{0}^{\frac{q}{2-q}}U(t_{0})\right\}$.	
\end{proof}

\subsection{Proof of Theorem \ref{thrm:avoidance_DIN}}
\label{subsec:avoidance_proof}

In this subsection we will prove Theorem \ref{thrm:avoidance_DIN}, which guarantees avoidance of strict saddle points by the dynamics of \eqref{eq:DIN}. 
Actually, we will infer this result from the avoidance of strict saddle points by \eqref{HBHDfirstorder}. 
The line of approach will be by applying lemma \ref{lemma:avoidstrong}, which provides conditions for the avoidance of strict saddle points, for general autonomous first-order ODEs. 
Before we proceed, we introduce some notation for the set of fixed points with strong escape directions, $\mathcal{S}$, and the set of initial conditions attracted by them, $\mathcal{M}_s$. 
Given a function $f\in\mathbb{R}^D\to\mathbb{R}$, consider a $D$-dimensional autonomous first-order ODE of general form 
\begin{equation} 
\label{eq:ODE}
    \dot{z} = f(z), \qquad z\in\mathbb{R}^D
\end{equation}
we can use the following definitions
\begin{definition}
    The set of fixed points of \eqref{eq:ODE} with strong escape directions is defined to be
    \begin{equation}
        \mathcal{S}(f)=\{\bar{z}\in\mathbb{R}^D:f(\bar{z})=0 \ \& \ \nabla f(\bar{z}) \ \text{has a strictly positive eigenvalue}\}
    \end{equation} 
    and the set of all initial conditions such that their solution-trajectories converge to $\mathcal{S}$ is defined through
    \begin{equation}
        \mathcal{M}_s(f) = \{z_0\in\mathbb{R}^D : \text{if} \ z(0)=z_0 \ \text{then under \eqref{eq:ODE}} \ z(t)\to\bar{z}\in\mathcal{S}\}.
    \end{equation} 
    We will often omit the explicit dependence on $f$. 
\end{definition}
Given these definitions we can state the following theorem
\begin{lemma}
\label{lemma:avoidstrong}
    Assuming $f \in C^1(\mathbb{R}^D)$ and that all solution-trajectories of \eqref{eq:ODE} converge, the set of all initial conditions that, under \eqref{eq:ODE}, are attracted by fixed points with strong escape directions, $\mathcal{M}_s(f)$, is of Lebesgue measure 0. 
\end{lemma}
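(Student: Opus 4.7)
The plan is to prove this via the Center-Stable Manifold Theorem combined with a Lindelöf-type covering argument. Concretely, for each $\bar{z}\in \mathcal{S}$, since $\nabla f(\bar{z})$ has an eigenvalue with strictly positive real part, the classical local center-stable manifold theorem (see e.g.\ Perko or Hartman) produces an open neighborhood $U_{\bar{z}}\subset \mathbb{R}^D$ and a $C^{1}$ embedded submanifold $W^{cs}_{\mathrm{loc}}(\bar{z})\subset U_{\bar{z}}$ of codimension at least one, with the property that every forward-invariant orbit of \eqref{eq:ODE} that remains in $U_{\bar{z}}$ for all $t\geq 0$ must lie in $W^{cs}_{\mathrm{loc}}(\bar{z})$. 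In particular $W^{cs}_{\mathrm{loc}}(\bar{z})$ has Lebesgue measure $0$ in $\mathbb{R}^D$.

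Next, I would exploit the standing assumption that every solution-trajectory of \eqref{eq:ODE} converges. If $z_{0}\in \mathcal{M}_{s}$, then $z(t)\to \bar{z}$ for some $\bar{z}\in \mathcal{S}$; since $\bar{z}\in U_{\bar{z}}$ and $U_{\bar{z}}$ is open, continuity yields an integer $T\geq 0$ with $z(t)\in U_{\bar{z}}$ for all $t\geq T$. Thus $z(T)\in W^{cs}_{\mathrm{loc}}(\bar{z})$, which gives $z_{0}= \phi_{T}^{-1}(z(T))\in \phi_{T}^{-1}(W^{cs}_{\mathrm{loc}}(\bar{z}))$, where $\phi_{t}$ is the flow of \eqref{eq:ODE}. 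Because $\mathbb{R}^{D}$ is second countable, the open cover $\{U_{\bar{z}}\}_{\bar{z}\in \mathcal{S}}$ of $\mathcal{S}$ admits a countable subcover $\{U_{\bar{z}_{k}}\}_{k\in \mathbb{N}}$. Combining these observations yields the inclusion
\begin{equation}
\mathcal{M}_{s}(f)\;\subseteq\;\bigcup_{k\in \mathbb{N}}\,\bigcup_{T\in \mathbb{N}}\,\phi_{T}^{-1}\bigl(W^{cs}_{\mathrm{loc}}(\bar{z}_{k})\bigr).
\end{equation}

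To finish, I would argue that each set on the right is Lebesgue-null. Since $f\in C^{1}(\mathbb{R}^{D})$ and all solutions exist for all forward time (they converge, so in particular are defined on $[0,\infty)$), uniqueness gives that $\phi_{T}:\mathbb{R}^{D}\to \mathbb{R}^{D}$ is an injective $C^{1}$ map for every $T\geq 0$. Hence $\phi_{T}^{-1}$ is $C^{1}$ on its domain $\phi_{T}(\mathbb{R}^{D})\subseteq \mathbb{R}^{D}$, and therefore locally Lipschitz; such maps send measure-zero sets to measure-zero sets. Since $W^{cs}_{\mathrm{loc}}(\bar{z}_{k})$ is a codimension-$\geq 1$ $C^{1}$ submanifold and thus already Lebesgue-null, we conclude that $\phi_{T}^{-1}(W^{cs}_{\mathrm{loc}}(\bar{z}_{k}))$ has measure zero. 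The displayed inclusion is then a countable union of null sets, giving $|\mathcal{M}_{s}(f)|=0$.

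The main obstacle in carrying out this plan is the non-hyperbolic regime: the strong-escape condition only requires one unstable eigenvalue and permits a non-empty center spectrum. A bare stable-manifold statement will not do; one must invoke the center-stable version, and one must verify that the local invariance statement captures every forward orbit remaining in $U_{\bar{z}}$, not merely the ones converging to $\bar{z}$. A secondary technical point is justifying that $\phi_{T}^{-1}$ preserves null sets despite possibly failing to be globally surjective, which is handled by noting that we only ever apply it on its natural domain and that $C^{1}$ (hence locally Lipschitz) maps on open subsets of $\mathbb{R}^{D}$ map null sets to null sets.
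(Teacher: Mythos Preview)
Your proposal is correct and follows essentially the same route as the paper: a local center--stable manifold at each point of $\mathcal{S}$ (the paper derives this explicitly from Carr's theorem rather than citing it as a black box), a countable subcover via second countability of $\mathbb{R}^{D}$, and a countable union over integer times of flow-preimages of the null manifolds. The only nit is that injectivity of $\phi_{T}$ by itself does not give a $C^{1}$ inverse; you should also note that the Jacobian of the flow is everywhere nonsingular (Liouville's formula), or, as the paper does, simply use that $\phi_{-t}$ is locally Lipschitz.
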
 
The proof of the above lemma is based on Carr's Theorem \cite{perko2013differential} and it is provided in Appendix \ref{app:avoidance}.

We aim to apply the above lemma to \eqref{HBHDfirstorder}, which means for $D=2d$ and $z=(x,y)$ with
\begin{equation}
\label{eq:gDIN_f}
    f(x,y) = 
	\begin{pmatrix}
	    - \beta\nabla F(x) - \left(\alpha-\frac{1}{\beta}\right)x - \frac{1}{\beta}y \\
		- \left(\alpha-\frac{1}{\beta}\right)x - \frac{1}{\beta}y
	\end{pmatrix}
\end{equation}
To this end, we need to connect the strict saddle points of $F$ with the set $\mathcal{M}_s$ whose avoidance is provided by lemma \ref{lemma:avoidstrong}. 
Since the fixed points of \eqref{eq:ODE}, with $f$ as defined in \eqref{eq:gDIN_f}, are the points $(\bar{x},(1-\alpha\beta)\bar{x})$ for all critical points $\bar{x}$ of $F$, we need to connect the eigenvalues of $\nabla^2 F(\bar{x})$ with the eigenvalues of $D=\nabla f(\bar{x},(1-\alpha\beta)\bar{x})$. 
Given the following representation
\begin{equation}
    D = 
    -
    \begin{pmatrix}
        \beta\nabla^2F(\bar{x}) & \mathbb{0}_d \\ 
        \mathbb{0}_d & \mathbb{0}_d
    \end{pmatrix}
    - \frac{1}{\beta}
    \begin{pmatrix}
        \left( \alpha\beta - 1 \right)\mathbb{I}_d & \mathbb{I}_d \\ 
        \left( \alpha\beta - 1 \right)\mathbb{I}_d & \mathbb{I}_d 
    \end{pmatrix},
\end{equation} 
we can derive the following lemma, which is also implicit in \cite{castera2023inertial}
\begin{lemma}[\cite{castera2023inertial}]
\label{lmm:eigenvalues}
    For any distinct eigenvalue $\lambda_0$ of $\nabla^2F(\bar{x})$, define $\lambda_+$ and $\lambda_-$ through
    \begin{equation}
        \lambda_{\pm} = \frac{1}{2} \left( -(a+\beta\lambda_0) \pm \sqrt{(a+\beta\lambda_0)^2 -4\lambda_0} \right)
    \end{equation}
    Then, $\lambda_+$ and $\lambda_-$ are two distinct eigenvalues of $D$. 
    Specifically, if $\nabla^2F_0(\bar{x})$ has a strictly negative eigenvalue, then $D$ has a strictly positive eigenvalue.
\end{lemma}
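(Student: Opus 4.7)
The plan is to block-diagonalise $D$ by exploiting the spectral decomposition of the symmetric matrix $\nabla^{2}F(\bar{x})$. Since $F\in C^{2}$, there exists an orthonormal basis $v_{1},\dotsc,v_{d}$ of $\mathbb{R}^{d}$ consisting of eigenvectors of $\nabla^{2}F(\bar{x})$ with real eigenvalues $\lambda_{0,1},\dotsc,\lambda_{0,d}$. For each index $i$, the two-dimensional subspace
\begin{equation*}
V_{i} \defeq \mathrm{span}\{(v_{i},0),(0,v_{i})\}\subset\mathbb{R}^{2d}
\end{equation*}
is invariant under $D$: this follows immediately from the block representation of $D$ displayed just above the lemma, because every $d\times d$ block of $D$ is either a scalar multiple of $\mathbb{I}_{d}$ or a multiple of $\nabla^{2}F(\bar{x})$, and the latter preserves each eigenline of $\nabla^{2}F(\bar{x})$. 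Since $\mathbb{R}^{2d}=\bigoplus_{i} V_{i}$, the spectrum of $D$ is the union of the spectra of the $2\times 2$ restrictions $D|_{V_{i}}$, so it suffices to analyse one eigenvalue $\lambda_{0}$ at a time.

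In the basis $\{(v_{i},0),(0,v_{i})\}$, a direct computation from \eqref{eq:gDIN_f} gives
\begin{equation*}
D|_{V_{i}} = \begin{pmatrix} -(\alpha-1/\beta)-\beta\lambda_{0,i} & -1/\beta \\ -(\alpha-1/\beta) & -1/\beta \end{pmatrix}.
\end{equation*}
A short calculation yields $\tr(D|_{V_{i}}) = -(\alpha+\beta\lambda_{0,i})$ and $\det(D|_{V_{i}}) = \lambda_{0,i}$, the latter because the $1/\beta$ terms cancel. Hence the characteristic polynomial of $D|_{V_{i}}$ is
\begin{equation*}
\lambda^{2} + (\alpha+\beta\lambda_{0,i})\lambda + \lambda_{0,i},
\end{equation*}
whose roots are exactly the $\lambda_{\pm}$ of the statement, provided one identifies the symbol $a$ there with the friction coefficient $\alpha$ (the only parameter appearing in $D$ that can play this role).

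For the final ``specifically'' claim, I would argue directly: if $\nabla^{2}F(\bar{x})$ has a strictly negative eigenvalue $\lambda_{0}$, then $-4\lambda_{0}>0$ and therefore
\begin{equation*}
(\alpha+\beta\lambda_{0})^{2}-4\lambda_{0} \;>\; (\alpha+\beta\lambda_{0})^{2} \;\geq\; 0.
\end{equation*}
Taking square roots yields $\sqrt{(\alpha+\beta\lambda_{0})^{2}-4\lambda_{0}} > |\alpha+\beta\lambda_{0}| \geq \alpha+\beta\lambda_{0}$, which rearranges to $\lambda_{+}>0$.

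The entire argument is essentially bookkeeping; there is no substantive obstacle. The only subtle point worth flagging in writing up is the $D$-invariance of the two-dimensional spaces $V_{i}$, which holds despite $D$ being non-symmetric because the only non-scalar $d\times d$ block in $D$ is a multiple of $\nabla^{2}F(\bar{x})$ and therefore preserves each of its eigenlines.
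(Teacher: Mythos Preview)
Your proof is correct. The paper does not actually supply a proof of this lemma; it merely states the result with attribution to \cite{castera2023inertial}, so there is nothing to compare against. Your block-diagonalisation on the eigenspaces of $\nabla^{2}F(\bar{x})$ is the standard route: each $2\times 2$ restriction has trace $-(\alpha+\beta\lambda_{0})$ and determinant $\lambda_{0}$, giving exactly the displayed roots, and your identification of the symbol $a$ in the formula with the friction coefficient $\alpha$ is correct (this is a notational slip in the paper). One harmless caveat: the statement's claim that $\lambda_{+}$ and $\lambda_{-}$ are always \emph{distinct} fails when the discriminant $(\alpha+\beta\lambda_{0})^{2}-4\lambda_{0}$ vanishes, but this does not affect the final implication you establish, since for $\lambda_{0}<0$ the discriminant is strictly positive.
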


Finally, we need the following lemma which shows that the mapping between the systems \eqref{eq:DIN} and \eqref{HBHDfirstorder} is given by
\begin{equation}
\label{eq:mapping}
    h(x,v) = (x,(1-\alpha\beta)x - \beta^2\nabla F(x) - \beta v)
\end{equation}

\begin{lemma}{\cite[Theorem $6.1$]{alvarez2002second}}
\label{lemma:equivalence}
    Assume $F\in C^2(\mathbb{R}^d)$.
    Then $\{(x(t),v(t))\}_{t=0}^{+\infty}$ is a solution-trajectory for \eqref{eq:DIN} if and only if $\{h(x(t),v(t))\}_{t=0}^{+\infty}$ is a solution trajectory for \eqref{HBHDfirstorder}.
\end{lemma}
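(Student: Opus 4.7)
The statement is an equivalence of two ODE systems under the change of coordinates $h(x,v)=(x,(1-\alpha\beta)x-\beta^{2}\nabla F(x)-\beta v)$, so the plan is a direct verification by substitution, split into the two implications. The regularity assumption $F\in C^{2}$ is exactly what is needed to differentiate $\nabla F(x(t))$ along trajectories.

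\textbf{Forward direction.} The plan is to start from a $C^{2}$ solution-trajectory $(x(t),v(t))$ of \eqref{eq:DIN}, where by definition $v(t)=\dot{x}(t)$. I would define
\[
y(t) := (1-\alpha\beta)\,x(t) - \beta^{2}\nabla F(x(t)) - \beta\,\dot{x}(t),
\]
so that $(x(t),y(t))=h(x(t),v(t))$, and then verify both equations of \eqref{HBHDfirstorder} by direct algebraic substitution. The first equation reduces to the identity $\dot{x}+\beta\nabla F(x)+(\alpha-\tfrac{1}{\beta})x+\tfrac{1}{\beta}y=0$, which is automatic from the definition of $y$. For the second, I would differentiate $y(t)$ once, using $F\in C^{2}$ to obtain $\dot{y}=(1-\alpha\beta)\dot{x}-\beta^{2}\nabla^{2}F(x)\dot{x}-\beta\ddot{x}$, and then substitute \eqref{eq:DIN} to eliminate $\ddot{x}$; the resulting expression for $\dot{y}+(\alpha-\tfrac{1}{\beta})x+\tfrac{1}{\beta}y$ collapses to $0$ after using the first equation to simplify $\tfrac{1}{\beta}y$.

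\textbf{Reverse direction.} Starting from a solution $(x(t),y(t))$ of \eqref{HBHDfirstorder}, which a priori is only $C^{1}$ jointly, the first line of \eqref{HBHDfirstorder} gives
\[
\dot{x}(t) = -\beta\nabla F(x(t)) - \bigl(\alpha-\tfrac{1}{\beta}\bigr)x(t) - \tfrac{1}{\beta}y(t),
\]
and since $F\in C^{2}$ and the right-hand side is $C^{1}$ in $t$, we immediately upgrade $x$ to $C^{2}$. Setting $v:=\dot{x}$, the previous identity rearranges to $y=(1-\alpha\beta)x-\beta^{2}\nabla F(x)-\beta v$, which is exactly the second component of $h(x,v)$. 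It remains to recover \eqref{eq:DIN}: I would differentiate the first equation in $t$ (legal by the $C^{2}$ upgrade of $x$), obtaining $\ddot{x}+\beta\nabla^{2}F(x)\dot{x}+(\alpha-\tfrac{1}{\beta})\dot{x}+\tfrac{1}{\beta}\dot{y}=0$, and then substitute the second equation $\dot{y}=-(\alpha-\tfrac{1}{\beta})x-\tfrac{1}{\beta}y$. Combining with the first equation (which expresses $(\alpha-\tfrac{1}{\beta})x+\tfrac{1}{\beta}y=-\beta\nabla F(x)-\dot{x}$), the terms telescope to $\ddot{x}+\alpha\dot{x}+\beta\nabla^{2}F(x)\dot{x}+\nabla F(x)=0$, which is \eqref{eq:DIN}.

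\textbf{Main subtlety.} The calculations are routine once the regularity is in place, so the only genuine obstacle is the bootstrap of regularity in the reverse direction: one must know that a $C^{1}$ solution $y$ of the coupled system forces $x\in C^{2}$ so that $\ddot{x}$ and $\nabla^{2}F(x)\dot{x}$ in \eqref{eq:DIN} are actually meaningful. This is precisely the ``additional regularity assumption on $y(\cdot)$'' alluded to in the discussion preceding the lemma; under $F\in C^{2}$ it is automatic from the first line of \eqref{HBHDfirstorder}, because the right-hand side $-\beta\nabla F(x)-(\alpha-\tfrac{1}{\beta})x-\tfrac{1}{\beta}y$ inherits $C^{1}$-regularity from $x,y\in C^{1}$ and $\nabla F\in C^{1}$. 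With this point clarified, both directions reduce to the substitution arguments sketched above, and the equivalence follows.
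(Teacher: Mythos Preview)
Your proposal is correct; the substitution arguments in both directions are exactly right, and your identification of the regularity bootstrap as the only non-mechanical step is on point. Note, however, that the paper does not supply its own proof of this lemma: it is quoted verbatim from \cite[Theorem~6.1]{alvarez2002second}, so there is no in-paper argument to compare against, but your direct verification is the standard one and matches what that reference does.
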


Now we have all the tools to proceed with a sort proof of Theorem \ref{thrm:avoidance_DIN}.

\begin{proof}[\textbf{Proof of Theorem \ref{thrm:avoidance_DIN}.}]
    This proof is an application of lemma \ref{lemma:avoidstrong} for the first order system \eqref{HBHDfirstorder} as described above.
    Since we assume $F\in C^2(\mathbb{R}^d)$, clearly we have $f\in C^1(\mathbb{R}^d)$.
    Additionally, from the \L ojasiewicz assumption and Proposition \ref{proposition basic}, all trajectories of \eqref{eq:DIN} converge to points of the form $(\bar{x},0)$. 
    By the equivalence of the two systerms \eqref{eq:DIN} and \eqref{HBHDfirstorder} for $F\in C^2(\mathbb{R}^d)$ (provided by lemma \ref{lemma:equivalence}) and by the continuity of $h$, convergence of solution-trajectories of $\eqref{eq:DIN}$ implies convergence of solution-trajectories of $\eqref{HBHDfirstorder}$.
    Specifically, if $\bar{x}$ is a strict saddle point of $F$ and a solution trajectory $\{(x(t),v(t))\}_{t=0}^{+\infty}$ of \eqref{eq:DIN} converges to $(\bar{x},0)$, then $\{h(x(t),v(t))\}_{t=0}^{+\infty}$ converges to $(\bar{x},(1-\alpha\beta)\bar{x} - \beta^2\nabla F(\bar{x}))$, thus, by lemma \ref{lmm:eigenvalues}, the trajectory $\{h(x(t),v(t))\}_{t=0}^{+\infty}$ belongs in $\mathcal{M}_s$. 
    It follows that the initial conditions that are attracted by strict saddle points under \eqref{eq:DIN} are included in the inverse image of $\mathcal{M}_s$ under $h$ defined in \eqref{eq:mapping}. 

    Since under our assumptions $f\in C^1(\mathbb{R}^d)$ and the solution-trajectories of \eqref{HBHDfirstorder} converge, the conditions of lemma \ref{lemma:avoidstrong} hold and as a result we know that the set $\mathcal{M}_s$ is of Lebesque measure 0. 
    Since $h$ is 1-1 and differentiable, its inverse image of a measure 0 set is also of measure 0 and this concludes the proof.  
\end{proof}

\section{Numerical experiments}
\label{sec:numerics}

In this section we illustrate the behavior of \eqref{eq:DIN} and the associated results in Theorem \ref{basicteorates}, on two simple practical examples. More precisely we compare the efficiency of \eqref{eq:DIN} in terms of objective function values for different choices of the parameters $\alpha$ and $\beta$ and the plain heavy ball with friction system \eqref{HBode}. Both experiments were performed in the Julia programming language, by using the ODE solver with the Runge–Kutta pairs of orders $5$ and $4$ method "Tsit$5$", with absolute tolerance $\thickapprox 10^{-12}$.

\subsection{Quadratic function}
In this first example, we consider the minimization problem of a quadratic function $F(x)=\inner{Ax}{x}$, where $A\in\R^{d\times d}$, $x\in \R^{d}$, $d=400$. $A$ is a random positive semi-definite matrix with non-empty kernel and some given numbers for the smallest positive (and largest) eigenvalue equal to $\mu$ (and $L$ respectively). As a consequence, the function is $L$-smooth and convex (but not \textit{strongly} convex), and satisfies condition \hyperref[PL]{$\mathcal{L}(2)$} and $\mu$. We fix $L=20$ and we make three choices for $\mu = 0.2$, $0.04$ and $0.02$, corresponding to a condition number $\kappa = \frac{L}{\mu}$ equal to $100$, $500$ and $1000$ (respectively). Here we compare four choices of pairs $(\alpha,\beta)$: $(2\sqrt{\mu},\frac{1}{2\sqrt{\mu}})$ as suggested in \cite{attouch2022first}, $(\sqrt{\frac{\mu}{2}},\sqrt{\frac{2}{\mu}})$, $(1,1)$ and $(\sqrt{\mu}-\eps,\frac{1}{\sqrt{\mu}})$ with $\eps=10^{-4}$, as suggested in Corollary \ref{cor optimal rates}. The results are reported in Figure \ref{Figure3}. In all cases, we observe that the choice $(\alpha,\beta)=(\sqrt{\mu}-\eps,\frac{1}{\sqrt{\mu}})$ has better performance among all the others and is also optimal according to the upper bound $\bigof{e^{-2\sqrt{\mu}t}}$ (black dashed line in Figure \ref{Figure3}). 

\begin{figure}[t]
\centering
\includegraphics[height=29ex]{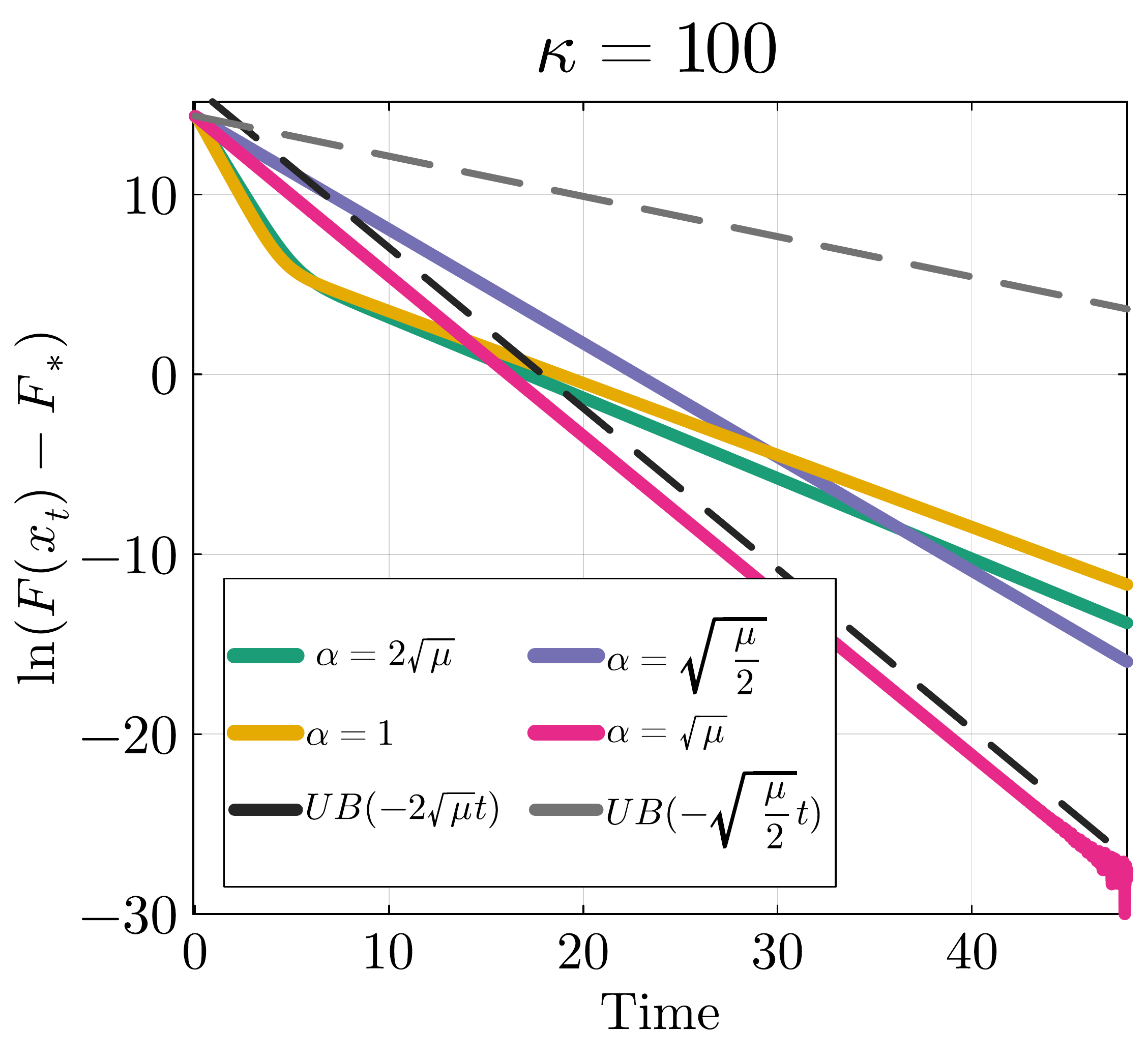}%
\includegraphics[height=29ex]{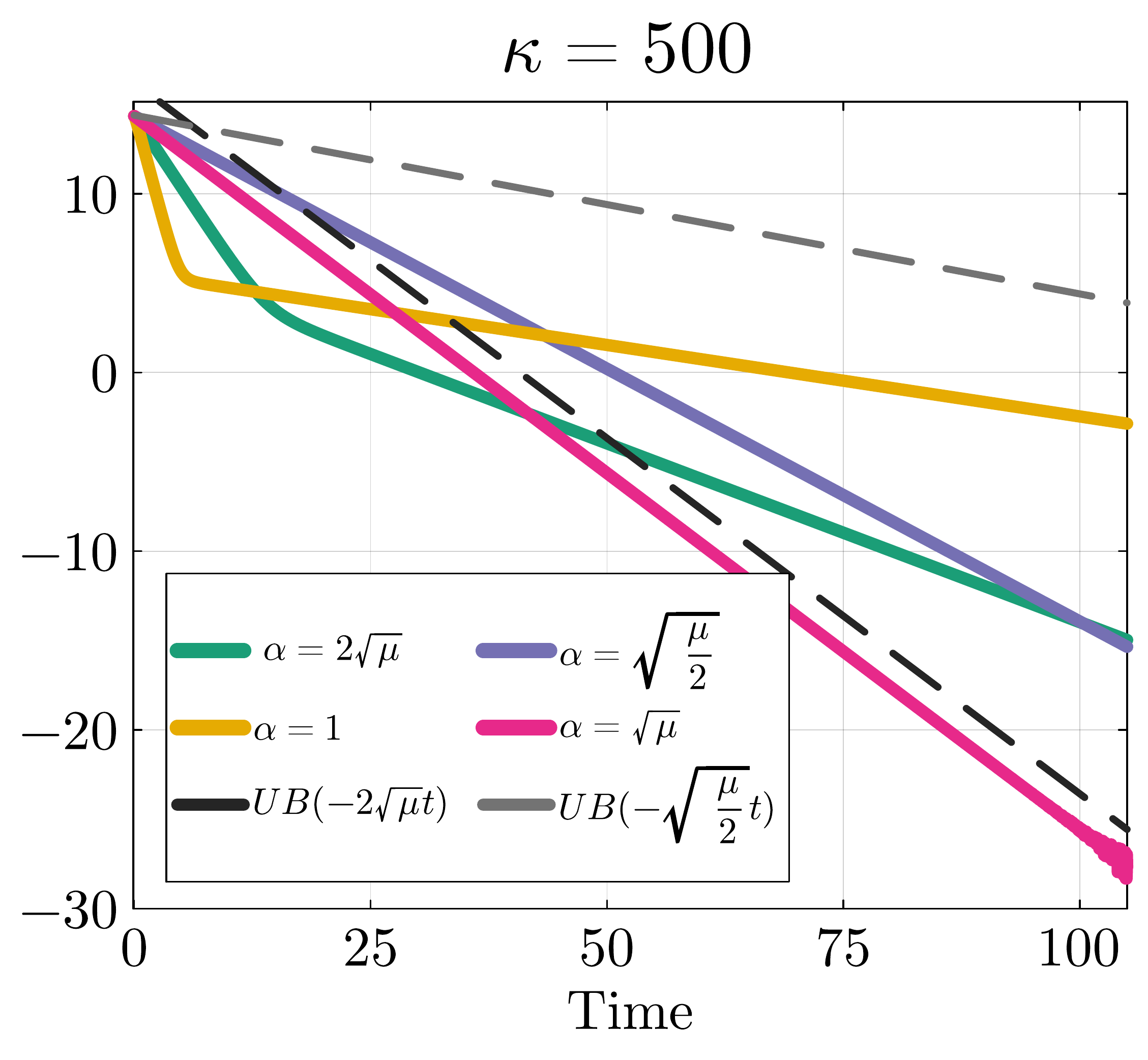}%
\includegraphics[height=29ex]{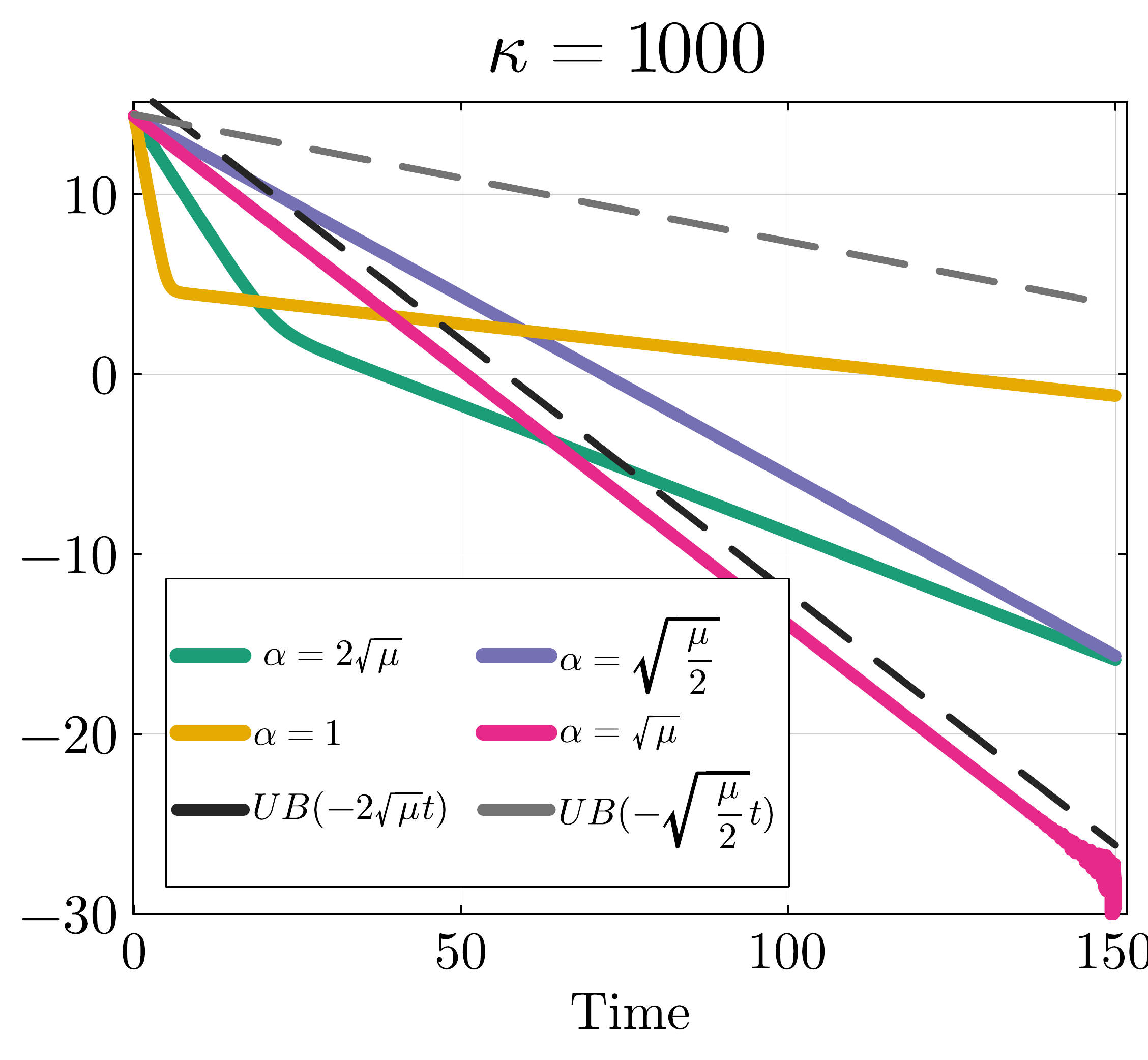}%
\caption{Performance of the trajectories generated by system \eqref{eq:DIN}, for minimizing a quadratic function $F(x)=\inner{Ax}{x}$, in terms of objective function values in logarithmic scale ($\ln\left(F(x(t))-F_{\ast}\right)$). Each column corresponds to the selection of a different condition number, starting from $\kappa=100$ (left) to $\kappa = 1000$ (right). We compare four different choices of friction parameters $\alpha>0$ for the system \eqref{eq:DIN}: $\alpha=2\sqrt{\mu}$ (green), $\alpha=1$ (yellow), $\alpha = \sqrt{\frac{\mu}{2}}$ (blue) and $\alpha =\sqrt{\mu}-\eps$ (magenta). The parameter $\beta$ is set to $\frac{1}{\alpha}$ in each case. In light gray and black dashed line the worst-case upper bounds for $F(x(t))-F_{\ast}$ corresponding to $\bigof{e^{-\sqrt{\frac{\mu}{2}}t}}$ and $\bigof{e^{-2\sqrt{\mu}t}}$ as found in \cite[Theorem $7$]{attouch2022first} and Corollary \ref{cor optimal rates}  (respectively).}\label{Figure3}
\end{figure}

\subsection{Rosenbrock function}
In this example we test the \eqref{eq:DIN} dynamics for the minimization problem of the Rosenbrock function
\begin{equation}
	\underset{(x,y)\in\R^{2}}{\min}F(x,y) = (1-x)^2 +100(y-x^{2})^{2}
\end{equation} 
The function is not convex, has a unique critical point which is the global minimum at $(x_{\ast},y_{\ast})=(1,1)$, has locally Lipschitz gradient $L\thicksim 501$ and satisfies condition \hyperref[PL]{$\Lb(2)$} and $\mu\thicksim 0.4$ (this can be seen by a Taylor approximation of $F$ or $\nabla F$ near $(x_{\ast},y_{\ast})$). Here we test system \eqref{eq:DIN} with the choices $(\alpha,\beta)=(2\sqrt{\mu},\frac{1}{2\sqrt{\mu}})$ as in \cite{attouch2022first} and $(\alpha,\beta)=(\sqrt{\mu}-\eps,\frac{1}{\sqrt{\mu}})$ with $\eps=10^{-4}$, as in Corollary \ref{cor optimal rates} and the plain heavy ball method \eqref{HBode} with $\alpha=2\left(\sqrt{\frac{L}{\mu}}-\sqrt{\frac{L}{\mu}-1}\right)\sqrt{\mu}$ as suggested in \cite{apidopoulos2022convergence} and $\alpha=2\sqrt{\mu}$ (see e.g. \cite{polyak1964some}, here we point out that the choice $\alpha=2\sqrt{\mu}$ is proven to be optimal only for strongly convex functions, however in this example it is still witnessed to be a good choice). In Figure \ref{Figure4} we can observe that system \eqref{eq:DIN} with $(\alpha,\beta)=(\sqrt{\mu}-\eps,\frac{1}{\sqrt{\mu}})$ performs better, without overshooting the minimizer $(x_{\ast},y_{\ast})$, in contrast to the heavy ball system with $\alpha=2\sqrt{\mu}$.

\begin{figure}[t]
\centering
\includegraphics[height=32ex]{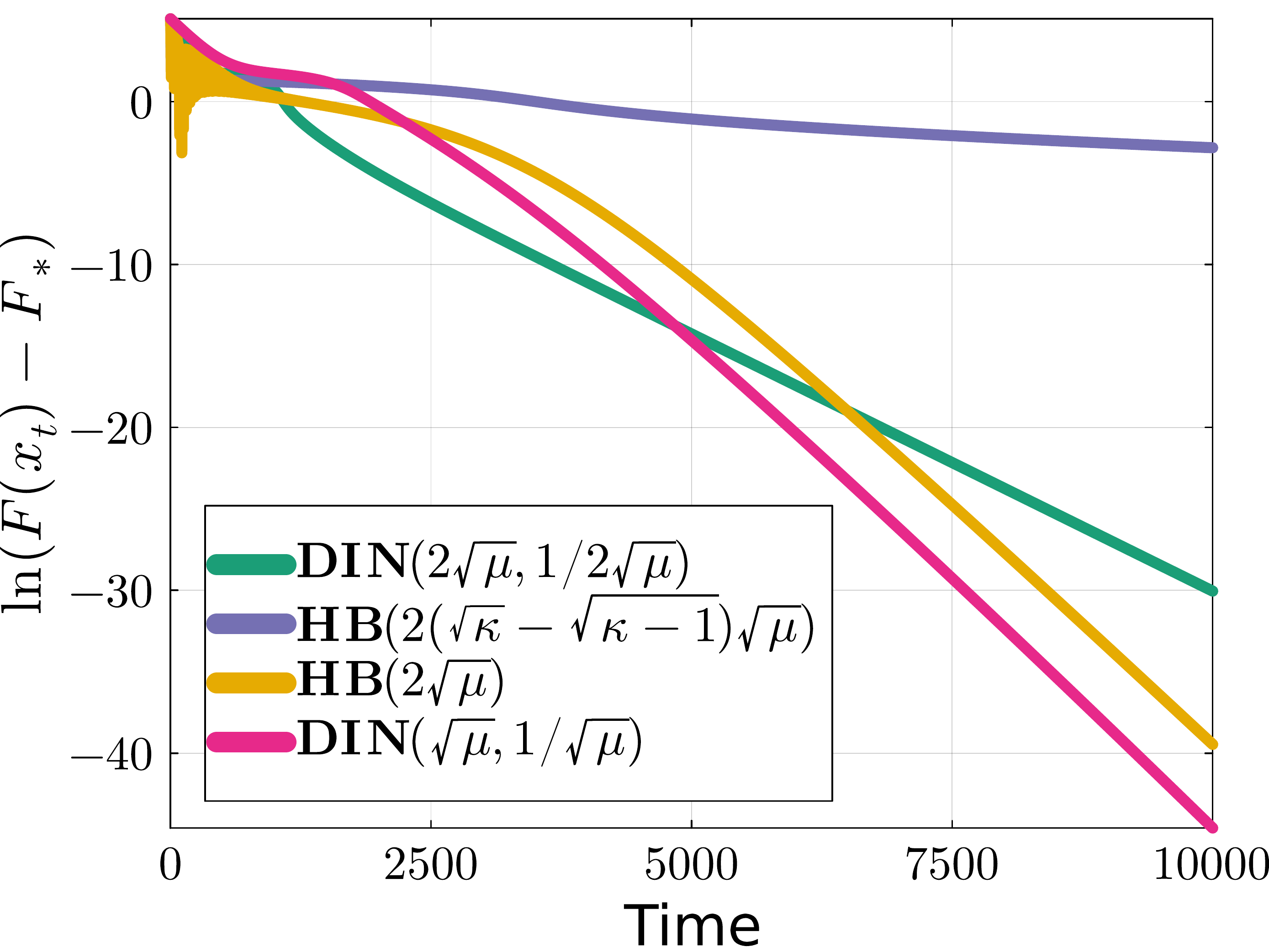}
\hfill
\includegraphics[height=32ex]{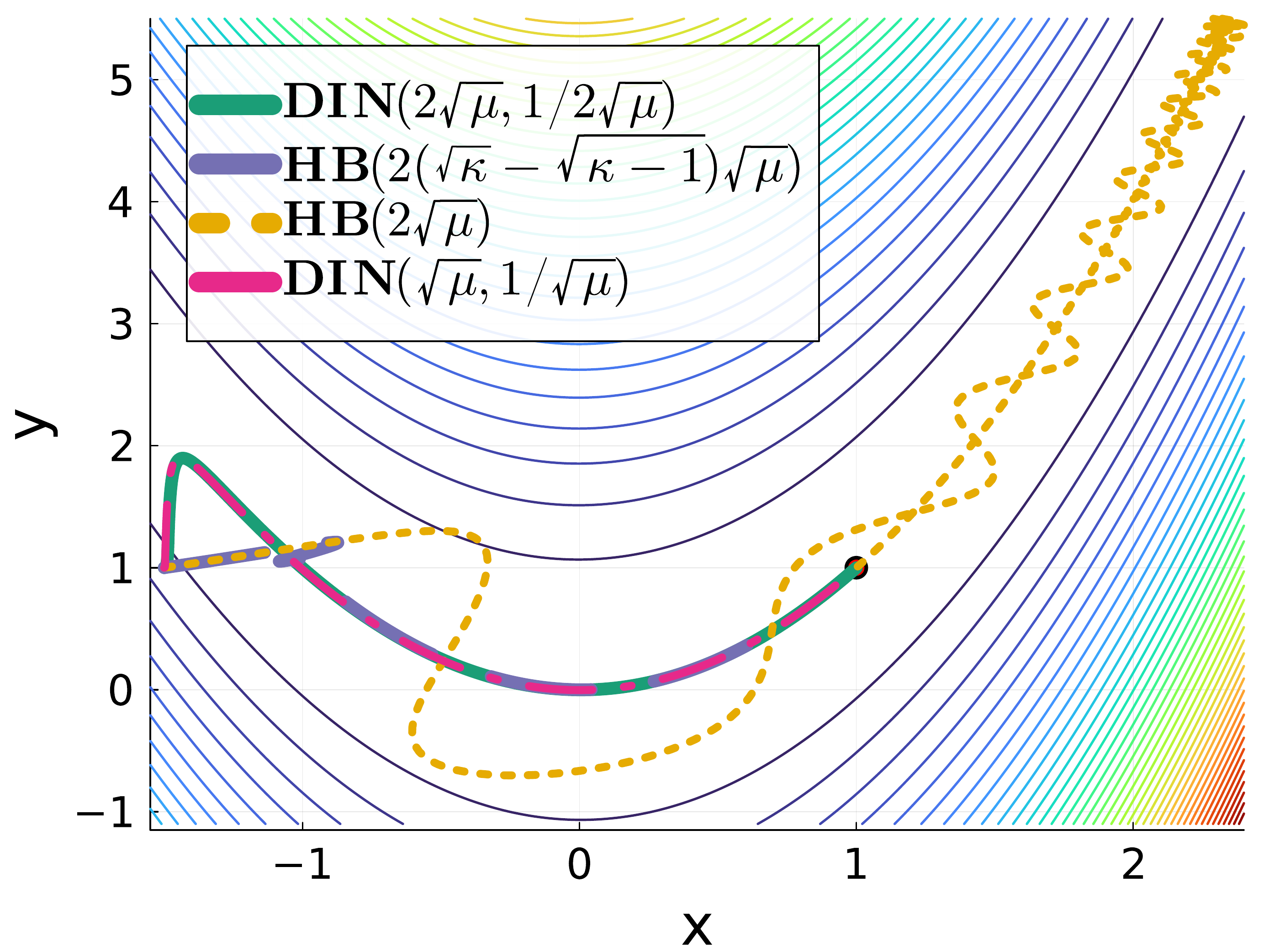}
\caption{Comparison of \eqref{eq:DIN} and the plain heavy ball with friction \eqref{HBode} on the Rosenbrock's function. The first column illustrates the values of the objective function (in logarithmic scale: $\ln\left(F(x(t))-F_{\ast}\right)$), while the second one the generated trajectories $(x(t),y(t))$ in $\R^{2}$. In green and magenta the \eqref{eq:DIN} system with $(\alpha,\beta)=\left(2\sqrt{\mu},\frac{1}{2\sqrt{\mu}}\right)$ and $(\alpha,\beta)=\left(\sqrt{\mu} - \eps,\frac{1}{\sqrt{\mu}}\right)$ (respectively). In blue and yellow lines the \eqref{HBode} system with $\alpha = 2\left(\sqrt{\kappa}-\sqrt{\kappa-1}\right)\sqrt{\mu}$ and $\alpha=2\sqrt{\mu}-\eps$. }\label{Figure4}
\end{figure}

\section{Concluding remarks}
\label{sec:conclusion}

In this work, we studied system \eqref{eq:DIN} in a smooth, non-convex setting, under the \L ojasiewicz condition (Assumption \ref{definition PL}). In particular we derived exponential decay estimates for the gap $F(x(t)) -F(\bar{x})$, that are optimal under appropriate tuning for the friction coefficients $\alpha$ and $\beta$, which are improved with respect to previous related works. Moreover we showed that these estimates are robust with respect to external perturbation errors. Finally, in the same setting, we provided an almost sure convergence result of the solution to \eqref{eq:DIN} (equivalently \eqref{HBHDfirstorder}) to local minima of the objective function $F$. 

An interesting future direction consists in extending all the above results to the algorithms associated to \eqref{eq:DIN} (or \eqref{HBHDfirstorder}), which can be seen as their finite difference scheme and thus may inherit their convergence properties.

\paragraph{\textbf{Acknowledgements}} This work has been partially supported by project MIS 5154714 of the National Recovery and Resilience Plan Greece 2.0 funded by the European Union under the NextGenerationEU Program.

\appendix
\crefalias{section}{appendix}
\crefalias{subsection}{appendix}
\numberwithin{lemma}{section}	
\numberwithin{corollary}{section}	
\numberwithin{proposition}{section}	
\numberwithin{equation}{section}	

\section{Auxiliary lemmas}
\label{app:aux}


\begin{lemma}[Gr\"onwall's Lemma]\label{lemmagronwallcontinuous}
	Let $I\subset \R_{+}=[0,+\infty)$ be an inteval and $u$,$g$,$h$ $:I\longrightarrow \R$, with $h\in L^{1}_{loc}(I)$, $g\in \mathcal{C}(I)$, and $u\in\mathcal{C}^{1}(I)$, such that for all $t\in I$: \begin{equation}\label{gronwalhypothesis}\dot{u}(t)\leq g(t)u(t)+h(t)
	\end{equation}
	Then for all $s<t\in I$, it holds: \begin{equation}\label{gronwalconclusion}
		u(t)\leq u(s)e^{G(t)}+\int_{s}^{t}e^{G(t)-G(r)}h(r)dr
	\end{equation}
	where $G(t)=\int_{s}^{t}g(r)dr$.
\end{lemma}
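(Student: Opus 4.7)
The plan is to apply the standard integrating-factor trick. First I would introduce the auxiliary function $v(t) \defeq u(t) e^{-G(t)}$, which is well-defined and $C^1$ on $I$ since $G \in C^1(I)$ (as the primitive of a continuous function starting from $s$) and $u \in C^1(I)$. The motivation is that $G$ is tailored so that the linear-in-$u$ term $g(t)u(t)$ on the right-hand side of the differential inequality is exactly absorbed by differentiating $e^{-G(t)}$.

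Next I would compute $\dot v$ using the product rule and substitute the hypothesis $\dot u(t) \leq g(t)u(t) + h(t)$:
\begin{equation*}
\dot v(t) = \dot u(t) e^{-G(t)} - g(t) u(t) e^{-G(t)} \leq \bigl(g(t)u(t) + h(t)\bigr)e^{-G(t)} - g(t)u(t)e^{-G(t)} = h(t) e^{-G(t)}.
\end{equation*}
Here the cancellation of the $g(t)u(t)e^{-G(t)}$ terms is the whole point of the substitution.

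Then I would integrate the resulting scalar inequality from $s$ to $t$. Since $h \in L^1_{loc}(I)$ and $e^{-G(\cdot)}$ is continuous, the right-hand side is locally integrable, and the inequality can be integrated termwise to yield
\begin{equation*}
v(t) - v(s) \leq \int_s^t h(r) e^{-G(r)} \dd r.
\end{equation*}
Using $G(s) = 0$, this gives $v(s) = u(s)$, so $u(t) e^{-G(t)} \leq u(s) + \int_s^t h(r) e^{-G(r)} \dd r$.

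Finally, I would multiply through by $e^{G(t)} > 0$ to recover the desired estimate
\begin{equation*}
u(t) \leq u(s) e^{G(t)} + \int_s^t e^{G(t) - G(r)} h(r) \dd r.
\end{equation*}
I do not expect any genuine obstacle: the only subtlety worth a moment's care is the regularity needed to integrate $\dot v$, but the continuity assumptions on $g$ and $u$ together with $h \in L^1_{loc}$ make $\dot v$ locally integrable, so the fundamental theorem of calculus (in its absolutely continuous form) applies without issue.
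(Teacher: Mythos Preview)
Your proof is correct and follows the standard integrating-factor argument. The paper itself does not supply a proof of this lemma: it is stated in the appendix as a classical auxiliary result (alongside Bihari's lemma) and simply invoked where needed, so there is no paper proof to compare against.
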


\begin{lemma}\label{lemma nonlinear gronwall power p}
Let $p>1$, $C>0$ and $t_{0}>0$ be three constants, $I=[t_{0},+\infty)$ and $u$,$G$ $:I\longrightarrow \R$, with $G\in L^{1}_{loc}(I;\R_{+})$ and $u\in\mathcal{C}^{1}(I)$, such that for all $t\in I$:
	\begin{equation}\label{relation nonlinear gronwall appendix}
		\dot{u}(t) \leq G(t) - Cu^{p}(t)
	\end{equation}
	Then, for all $t\geq t_{0}$, it holds:
\begin{equation}\label{bound nonlinear gronwall appendix}
	u(t) \leq \left(\max\left\{ \left(\frac{1}{C(p-1)}\right)^{\frac{1}{p-1}}, t_{0}^{\frac{1}{p-1}}u(t_{0}) \right\} + \int_{t_{0}}^{t}r^{\frac{1}{p-1}}G(r)dr \right)t^{-\frac{1}{p-1}}
\end{equation}
\end{lemma}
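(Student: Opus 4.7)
}

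The plan is to linearize the Bernoulli-type nonlinearity via the substitution $v(t) \defeq t^{\alpha} u(t)$ with $\alpha \defeq 1/(p-1)$, which is the natural exponent since $K t^{-\alpha}$ with $K = (\alpha/C)^{\alpha} = (C(p-1))^{-1/(p-1)}$ is precisely the stationary profile of the ODE $\dot u = -Cu^{p}$. Differentiating $v$ and inserting the hypothesis \eqref{relation nonlinear gronwall appendix} gives, after using $\alpha(1-p) = -1$, the key identity
\begin{equation*}
\dot v(t) \leq t^{\alpha} G(t) + \frac{v(t)}{t}\bigl(\alpha - C\,v(t)^{p-1}\bigr).
\end{equation*}
The point of this form is that whenever $v(t) \geq K$ one has $\alpha - C v(t)^{p-1} \leq 0$, so the nonlinear term is a \emph{non-positive} correction and the differential inequality collapses to the linear bound $\dot v(t) \leq t^{\alpha} G(t)$.

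Next, I would introduce the candidate majorant
\begin{equation*}
w(t) \defeq \tilde K + \int_{t_{0}}^{t} s^{\alpha} G(s)\,ds,
\qquad
\tilde K \defeq \max\bigl\{K,\,t_{0}^{\alpha} u(t_{0})\bigr\} = \max\{K, v(t_{0})\}.
\end{equation*}
Since $G \geq 0$, $w$ is non-decreasing and satisfies $w(t) \geq \tilde K \geq K$ for all $t \geq t_{0}$, together with $\dot w(t) = t^{\alpha} G(t)$ and $v(t_{0}) \leq w(t_{0})$. The main claim is that $v(t) \leq w(t)$ on $[t_{0}, +\infty)$. I would prove this by a barrier argument: suppose for contradiction that $\phi \defeq v - w$ becomes positive at some $t_{1} > t_{0}$, and let $t_{2} \defeq \sup\{t \in [t_{0}, t_{1}] : \phi(t) \leq 0\}$. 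Continuity forces $\phi(t_{2}) = 0$, hence $v(t_{2}) = w(t_{2}) \geq K$, while $\phi(t) > 0$ on $(t_{2}, t_{1}]$. On this interval $v(t) > w(t) \geq K$, so the key identity yields $\dot v(t) \leq t^{\alpha} G(t) = \dot w(t)$, i.e., $\dot\phi(t) \leq 0$. Combined with $\phi(t_{2}) = 0$ this gives $\phi \leq 0$ on $(t_{2}, t_{1}]$, contradicting the choice of $t_{1}$. Finally, dividing the inequality $v(t) \leq w(t)$ by $t^{\alpha}$ recovers precisely \eqref{bound nonlinear gronwall appendix}.

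The main obstacle is the barrier step: one must be careful that the ``good regime'' $v \geq K$ is actually maintained where the comparison is needed. The reason this works is structural: the threshold $K$ at which the nonlinear term turns non-positive coincides with the minimum value of $w$, so whenever $v$ tries to cross $w$ from below, it is automatically above $K$, and the nonlinear term helps rather than hurts. The rest of the argument is routine bookkeeping (choice of $\tilde K$ to dominate the initial datum, non-negativity of $G$ to make $w$ non-decreasing), and no additional regularity on $G$ beyond $G \in L^{1}_{\mathrm{loc}}$ is required, since both sides of the comparison inequality are absolutely continuous in $t$.
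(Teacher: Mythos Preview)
Your proof is correct and follows essentially the same route as the paper's: both hinge on the substitution $v(t)=t^{1/(p-1)}u(t)$ and the observation that once $v\geq K=(C(p-1))^{-1/(p-1)}$ the nonlinear term becomes non-positive, collapsing the inequality to $\dot v\leq t^{1/(p-1)}G(t)$. The paper phrases this by partitioning $I$ into $I_{1}=\{v\leq K\}\cup\{t_{0}\}$ (where the bound is immediate) and $I_{2}=\{v>K\}$, then integrating the linearized inequality on a maximal subinterval of $I_{2}$ whose left endpoint lies in $I_{1}$; your barrier/comparison argument with $w$ is simply a cleaner packaging of the same integration step, with the ``last crossing time'' $t_{2}$ playing the role of the paper's left endpoint $\bar t-\varepsilon\in I_{1}$.
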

\begin{proof} 
We start by defining the following sets: 
$I_{1}:=\left\{t\in I ~:~ u^{p-1}(t) \leq \frac{1}{C(p-1)t} \right\}\cup\{t_{0}\}$ and $I_{2}:= \left\{t\in I ~:~ u^{p-1}(t) > \frac{1}{C(p-1)t} \right\}$, so that $I=I_{1}\cup I_{2}$.

Firstly, note that for all $t\in I_{1}$, \eqref{bound nonlinear gronwall appendix} holds immediately true.
On the other hand, for all $t\in I_{2}$, it holds  $Cu^{p}(t) \geq \frac{1}{(p-1)t}u(t)$. Therefore, by multiplying \eqref{relation nonlinear gronwall appendix} with $t^{\frac{1}{p-1}}$, it follows
\begin{equation}\label{eq proof nonlinear growall}
	t^{\frac{1}{p-1}}\dot{u}(t) +\frac{1}{p-1}t^{\frac{1}{p-1}-1}u(t) \leq t^{\frac{1}{p-1}}G(t)
\end{equation}

In addition, since $I_{2}$ is and open set in $I$ for all $t\in I_{2}$, there exists $\bar{t} \in I_{2}$ and $0<\varepsilon<t-t_{0}$, such that $t\in (\bar{t}-\varepsilon,\bar{t}+\varepsilon)\subset I_{2}$ (which without loss of generality is considered to be maximal, i.e. $\bar{t}-\varepsilon \in I_{1}$). 
By integrating \eqref{eq proof nonlinear growall} on $(\bar{t}-\varepsilon,t)\subset I_{2}$, it follows:
\begin{equation}
	\begin{aligned}
	t^{\frac{1}{p-1}}u(t) & \leq (\bar{t}-\varepsilon)^{\frac{1}{p-1}}u(\bar{t}-\varepsilon) + \int_{\bar{t}-\varepsilon}^{t}r^{\frac{1}{p-1}}G(s)ds\\
	& \leq 	\max\left\{t_{0}^{\frac{1}{p-1}}u(t_{0}), \left(C(p-1\right)^{-\frac{1}{p-1}}\right\} +\int_{t_{0}}^{t}s^{\frac{1}{p-1}}G(s) ds
	\end{aligned}
\end{equation}	
where the last inequality follows from the fact that $\bar{t} - \varepsilon \in I_{1}$ and that $G(t)\geq 0$.
\end{proof}

The next Lemma is a generalization of Gr\"onwall-Bellman Lemma
(see \cite{bihari1956generalization}).
\begin{lemma}\label{lemmaBihari}[Bihari]
	Let $\psi(t)$, $k(t)$ be positive and continous functions in $c < t < d$. Let $a$ and $b$ be non-negative constants; further let $\omega(u)$ be a positive non-decreasing function for $u > 0$. Then the inequality
	\begin{equation}\label{Biharihyp}
	\psi(t) \leq a+b\int_{c}^{t} k(s)\omega(\psi(s)) \ ds \ \ \ \ \text{for every} \ c\leq t \leq d
	\end{equation}
	implies
	\begin{equation}\label{bihariconclusion}
	\psi(t) \leq \Omega^{-1}\left[\Omega(a)+b\int_{c}^{t} k(s) \ ds\right] \ \ \ \ \text{for every} \ c\leq t \leq d'\leq d
	\end{equation}
	where
	$$\Omega(u):=\int_{\varepsilon}^{u} \frac{ds}{\omega(s)},  \ \ \ \ \text{for} \ \varepsilon > 0 \ \text{and} \ u>0$$
	and $d'$ characterizes the interval for which, $\Omega^{-1}$ is well defined.
\end{lemma}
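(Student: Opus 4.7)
The plan is to reduce the integral inequality to a separable differential inequality via a majorant function. First I would define
\[
\phi(t) := a + b\int_{c}^{t} k(s)\,\omega(\psi(s))\,ds,
\]
so that by hypothesis $\psi(t)\leq\phi(t)$ on $[c,d]$, and so that $\phi$ is $C^{1}$ with $\phi(c)=a$ and $\phi'(t) = b\,k(t)\,\omega(\psi(t))$. Since $k\geq 0$ and $\omega>0$, $\phi$ is non-decreasing, and monotonicity of $\omega$ combined with $\psi\leq\phi$ yields the pointwise bound $\phi'(t) \leq b\,k(t)\,\omega(\phi(t))$.

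Next I would separate variables in the standard Bihari fashion. Dividing by $\omega(\phi(t))>0$,
\[
\frac{\phi'(t)}{\omega(\phi(t))} \leq b\,k(t),
\]
and recognizing the left-hand side as $\frac{d}{dt}\Omega(\phi(t))$ by the chain rule (valid since $\Omega'(u)=1/\omega(u)>0$, so $\Omega$ is strictly increasing on its domain), I would integrate from $c$ to $t$ to get
\[
\Omega(\phi(t)) - \Omega(a) \leq b\int_{c}^{t} k(s)\,ds,
\]
using $\phi(c)=a$. Since $\Omega$ is strictly increasing, it admits a continuous inverse $\Omega^{-1}$ on its range; applying $\Omega^{-1}$ to both sides and chaining with $\psi(t)\leq\phi(t)$ yields exactly the desired conclusion
\[
\psi(t) \leq \Omega^{-1}\!\left[\Omega(a)+b\int_{c}^{t} k(s)\,ds\right].
\]

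The main technical subtlety, and the reason $d'$ appears in the statement, is the domain of validity of $\Omega^{-1}$: the argument $\Omega(a)+b\int_{c}^{t} k(s)\,ds$ must lie in the range of $\Omega$. For nonlinear $\omega$ (e.g.\ $\omega(u)=u^{p}$ with $p>1$), $\Omega$ is bounded above, so the right-hand side can exit its range in finite ``time,'' and one must take $d'$ to be the supremum of $t\in[c,d]$ for which $\Omega(a)+b\int_{c}^{t} k(s)\,ds$ remains in $\mathrm{range}(\Omega)$. A secondary point is the reference choice $\varepsilon>0$ in $\Omega(u)=\int_{\varepsilon}^{u} ds/\omega(s)$: different choices of $\varepsilon$ shift $\Omega$ by a constant, which cancels in the expression $\Omega^{-1}(\Omega(a)+\cdot)$, so the conclusion is independent of $\varepsilon$. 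If $a=0$ one may either argue by taking $a\to 0^{+}$ in the majorant (using continuity of $\phi$ in $a$) or work with the perturbed majorant $\phi_{\eta}(t) = (a+\eta) + b\int_{c}^{t} k(s)\omega(\psi(s))\,ds$ and pass $\eta\downarrow 0$ at the end, provided $\Omega^{-1}$ admits a limit at the relevant point. This passage-to-the-limit step is the one place the argument requires genuine care.
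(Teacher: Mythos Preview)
Your proof is correct and follows the classical Bihari argument: introduce the majorant $\phi(t)$ equal to the right-hand side of the hypothesis, use monotonicity of $\omega$ to obtain the separable differential inequality $\phi'/\omega(\phi)\le b\,k$, integrate, and invert $\Omega$. Your remarks on the role of $d'$ (range of $\Omega^{-1}$), the irrelevance of the base point $\varepsilon$, and the $a=0$ limiting case are all accurate.

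Note, however, that the paper does not actually give its own proof of this lemma: it is stated as an auxiliary result with attribution to Bihari's original 1956 paper and is used as a black box in the proofs of Theorems~\ref{basicteo2inexact}, \ref{basicteo2inexact q}, \ref{basicteorates firstorder}, and \ref{basicteorates firstorder q}. So there is nothing to compare against beyond the standard argument, which is exactly what you have reproduced.
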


\begin{lemma}\label{lemma binomial}
Let $\alpha$, $\beta$ and $\mu$ be some positive constants and  for all $y\geq 0$, consider the quadratic function 
\begin{equation}\label{P2y quadratic def}
	P_{2}(y)=y^{2} - \left((\alpha-\mu\beta)\beta+1\right)y + (1-\alpha\beta)\mu\beta^{2}
\end{equation}
 Then $P_{2}(y) \geq 0$, if-f the following condition holds true
\begin{equation}
\begin{cases}
\left(\alpha \leq 2(\sqrt{2}-1)\sqrt{\mu}\right)\&\left(\beta\in [\beta_{-},\beta_{+}]\right) \\
\text{ OR } \\
\left(\alpha \leq 2(\sqrt{2}-1)\sqrt{\mu}\right)\&\left(\beta\in (0,\beta_{-}]\cup[\beta_{+},+\infty)\right)\&\left(y\in (0,y_{-}]\cup[\max\{0,y_{+}\},+\infty)\right) \\
\text{ OR } \\
\left(\alpha > 2(\sqrt{2}-1)\sqrt{\mu}\right)\&\left(y\in (0,y_{-}]\cup[\max\{0,y_{+}\},+\infty)\right)
\end{cases}
\end{equation}
\begin{equation}
\text{where } \qquad \qquad \qquad \qquad \beta_{\pm}=\frac{1}{2\mu}\left(2\sqrt{2\mu}-\alpha \pm \sqrt{\left(2\sqrt{2\mu}-\alpha\right)^{2} - 4\mu}\right) \qquad \qquad \qquad 
\end{equation}
\begin{equation}
\text{and } \qquad \qquad	y_{\pm}=\frac{1}{2}\left((\alpha-\mu\beta)\beta+1 \pm \sqrt{\left((\alpha-\mu\beta)\beta+1\right)^{2}-4(1-\alpha\beta)\mu\beta^{2}}\right). \qquad
\end{equation}
Here we use the convention $(0,y_{-}]=\emptyset$, if $y_{-}<0$.
\end{lemma}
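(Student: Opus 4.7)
The plan is to exploit the fact that $P_2(y)$ is a convex quadratic in $y$ (with unit leading coefficient), so the condition $P_2(y) \geq 0$ splits into two disjoint scenarios according to the sign of the discriminant $\Delta_y$ of $P_2$ viewed as a quadratic in $y$: either $\Delta_y \leq 0$, in which case $P_2 \geq 0$ on all of $\R$, or $\Delta_y > 0$, in which case the two real roots of $P_2$ are exactly $y_\pm$ (the formula in the statement is just the quadratic formula), and $P_2(y)\geq 0$ iff $y\leq y_-$ or $y\geq y_+$.

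The first step I would carry out is the key algebraic identity
\begin{equation*}
\Delta_y = \bigl((\alpha-\mu\beta)\beta+1\bigr)^{2} - 4(1-\alpha\beta)\mu\beta^{2} = \bigl(\mu\beta^{2} + \alpha\beta + 1\bigr)^{2} - 8\mu\beta^{2},
\end{equation*}
obtained by straightforward expansion; this recasts the discriminant condition in a form adapted to factoring out the $\alpha,\beta$--dependence. Since $\mu\beta^{2}+\alpha\beta+1>0$ for $\beta>0$, the inequality $\Delta_y \leq 0$ is equivalent to $\mu\beta^{2} + \alpha\beta + 1 \leq 2\sqrt{2\mu}\,\beta$, i.e.
\begin{equation*}
\mu\beta^{2} - (2\sqrt{2\mu}-\alpha)\beta + 1 \leq 0. \qquad (\star)
\end{equation*}

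The second step is to analyse $(\star)$ as a quadratic in $\beta$ with positive leading coefficient $\mu$; its discriminant is $(2\sqrt{2\mu}-\alpha)^{2}-4\mu$. I would then separate three regimes for $\alpha$: (i) if $\alpha \leq 2(\sqrt{2}-1)\sqrt{\mu}$, this discriminant is non-negative and $2\sqrt{2\mu}-\alpha \geq 2\sqrt{\mu}>0$, so the two real roots of $(\star)$ are precisely the positive numbers $\beta_\pm$ defined in the statement, and $(\star)$ — equivalently $\Delta_y \leq 0$ — holds iff $\beta\in[\beta_-,\beta_+]$; (ii) if $\alpha\in(2(\sqrt{2}-1)\sqrt{\mu},\,2(\sqrt{2}+1)\sqrt{\mu})$, the discriminant of $(\star)$ is negative, so $(\star)$ fails everywhere and $\Delta_y>0$ for all $\beta>0$; (iii) if $\alpha\geq 2(\sqrt{2}+1)\sqrt{\mu}$, the discriminant is non-negative but $2\sqrt{2\mu}-\alpha<0$, forcing both roots of $(\star)$ to be negative, so again $\Delta_y>0$ for every $\beta>0$. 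Regimes (ii) and (iii) together are exactly the condition $\alpha>2(\sqrt{2}-1)\sqrt{\mu}$ displayed in the third case of the lemma.

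The third and final step is to handle the case $\Delta_y>0$, which occurs either in regime (i) with $\beta\in(0,\beta_-)\cup(\beta_+,+\infty)$ or throughout $\alpha>2(\sqrt{2}-1)\sqrt{\mu}$. In this regime, convexity of $P_2$ gives $P_2(y)\geq 0 \iff y\leq y_-$ or $y\geq y_+$; intersecting with $y\geq 0$ and using the convention $(0,y_-]=\emptyset$ when $y_-<0$ yields the admissible set $y\in(0,y_-]\cup[\max\{0,y_+\},+\infty)$ as stated. One only needs to check the two boundary behaviours: when $y_-<0\leq y_+$ the set collapses to $[y_+,+\infty)$, and when $y_+<0$ (so both roots are negative) one has $\max\{0,y_+\}=0$ and all $y\geq 0$ are admissible, consistent with $P_2(0)=(1-\alpha\beta)\mu\beta^{2}$ being $\geq 0$ in that sub-regime. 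No real obstacle is expected beyond this bookkeeping; the only non-mechanical step is the identity $\Delta_y = (\mu\beta^{2}+\alpha\beta+1)^{2}-8\mu\beta^{2}$, which drives the rest of the argument.
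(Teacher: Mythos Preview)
Your proposal is correct and follows essentially the same approach as the paper: both analyse the sign of the discriminant $\Delta_y$ of $P_2$ (which the paper writes as the quartic $P_4(\beta)$) to separate the case where $P_2\geq 0$ everywhere from the case where one must restrict $y$ to lie outside the roots $y_\pm$. Your explicit identity $\Delta_y=(\mu\beta^{2}+\alpha\beta+1)^{2}-8\mu\beta^{2}$ is a neat shortcut — it factors the quartic as a difference of squares and reduces the root analysis to the single quadratic $(\star)$ in $\beta$, whereas the paper instead presents the four roots of the quartic directly (the two always-negative roots $\hat{\beta}_\pm$ and, when $\alpha\leq 2(\sqrt{2}-1)\sqrt{\mu}$, the two positive roots $\beta_\pm$) without displaying this factorisation.
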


\begin{proof}
First, note that the discriminant of the quadratic function $P_2(y)$, can be written as a $4$th-order polynomial of $\beta$:
\begin{equation}\label{polynomial beta}
	P_{4}(\beta)=\mu^{2}\beta^{4}+2\mu\alpha\beta^{3} +(\alpha^{2}-6\mu)\beta^{2}+2\alpha\beta +1
\end{equation}
For all $\alpha> 0$, the polynomial $P_{4}(\beta)$ has always two negative roots $\hat{\beta}_{\pm}$, where
\begin{equation}
	\hat{\beta}_{\pm} = \frac{1}{2\mu}\left(-\alpha-2\sqrt{2\mu}\pm\sqrt{(\alpha+2\sqrt{2\mu})^{2}-4\mu}\right)
\end{equation}

In addition, if $\alpha \leq 2(\sqrt{2}-1)\sqrt{\mu}$, then the polynomial $P_{4}(\beta)$ has also two positive roots $\beta_{\pm}$ (that may collapse to a single one, when $\alpha = 2(\sqrt{2}-1)\sqrt{\mu}$), where 
\begin{equation}
	\beta_{\pm} = \frac{1}{2\mu}\left(2\sqrt{2\mu}-\alpha\pm\sqrt{(\alpha-2\sqrt{2\mu})^{2}-4\mu}\right)
\end{equation}

When $\alpha>2(\sqrt{2}-1)\sqrt{\mu}$, the polynomial $P_{4}(\beta)$ has only negative roots (and their cardinality is either $2$, if $2(\sqrt{2}-1)\sqrt{\mu}<\alpha<2(\sqrt{2}+1)\sqrt{\mu}$, or $3$ if $\alpha=2(\sqrt{2}+1)\sqrt{\mu}$, or $4$ if $\alpha>2(\sqrt{2}+1)\sqrt{\mu}$).

Therefore, if $\alpha\leq 2(\sqrt{2}-1)\sqrt{\mu}$, we have that $P_{4}(\beta)>0 $ when $\beta\in (0,\beta_{-})\cup(\beta_{+},+\infty)$ (this follows from the continuity of $P_{4}(\beta)$).
In the same way, when $\alpha > 2(\sqrt{2}-1)\sqrt{\mu}$, it follows that $P_{4}(\beta) \geq 0$, for all $\beta\geq 0$ (indeed, since all the roots are negative and $P_{4}(0)>0$, the polynomial $P_{4}(\beta)$ keeps constant sign in $[0,+\infty)$). Overall, we have the following characterization for $P_{4}(\beta)$:
\begin{equation}\label{P4bpositive}
	P_{4}(\beta) \geq 0 \Leftrightarrow \begin{cases}
		& \{\alpha \leq 2(\sqrt{2}-1)\sqrt{\mu}\} \& \{\beta\in (0,\beta_{-})\cup(\beta_{+},+\infty)\} \\
		\text{ OR } ~ &\{\alpha>2(\sqrt{2}-1)\sqrt{\mu}\} \& \{\beta>0\}\}
	\end{cases}
\end{equation}

Finally for the quadratic function $P_{2}(y)$ defined in \eqref{P2y quadratic def}, it follows that $P_{2}(y) \geq 0 $, if-f $P_{4}(\beta)\leq 0$, or $P_{4}(\beta)\geq 0$ and $y\in (0,y_{-}]\cup[\max\{0,y_{+}\},+\infty]$. This concludes the proof of Lemma \ref{lemma binomial}.
\end{proof}

%
%

\section{Convergence analysis}
\label{app:rates}

\begin{proof}[\textbf{Proof of Lemma \ref{lemma general conditions}}]
	Let $(x(t))_{t\geq 0}$ the trajectory generated by \eqref{eq:DIN}, converging to $\bar{x}$, with $(x(t))_{t\geq t_{0}} \in \Omega$, where $\Omega$ is a neighborhood of $\bar{x}$, such that  $F$ satisfies the \L ojasiewicz condition \hyperref[PL]{$\Lb(2)$} with $\mu>0$ in $\Omega$. Taking the derivative of $V$ with respect to time in \eqref{Lyapunov def} and characterizing $\ddot{x}(t)$ via \eqref{eq:DIN}, for all $t\geq t_{0}$ we obtain
	\begin{equation}\label{lyapunov basic}
		\begin{aligned}
        	\dot{V}(t) &= a\inner{\nabla F(x(t))}{\dot{x}(t)} + \beta\inner{\nabla^{2}F(x(t))\dot{x}(t)}{\dot{x}(t)+\beta\nabla F(x(t)} +\inner{\beta\nabla F(x(t)) +\dot{x}(t)}{\ddot{x}(t)} \\ 
			 &= (a-\alpha \beta -1)\inner{\nabla F(x(t))}{\dot{x}(t)} -\beta\norm{\nabla F(x(t))}^{2} -\alpha \norm{\dot{x}(t)}^{2} 
		\end{aligned}
	\end{equation}

	From the definition of $V(t)$ in \eqref{Lyapunov def}, the term $\inner{\nabla F(x(t))}{\dot{x}(t)}$, can be expressed as follows:
	\begin{equation}\label{expression of Wdot}
		\inner{\nabla F(x(t))}{\dot{x}(t)} =\frac{1}{\beta }V(t) -\frac{a}{\beta }(F(x(t))-F(\bar{x})) -\frac{\beta}{2}  \norm{\nabla F(x(t))}^{2} -\frac{1}{2\beta}\norm{\dot{x}(t)}^{2} ,
	\end{equation}
	thus by injecting \eqref{expression of Wdot} into \eqref{lyapunov basic}, we find
\begin{flalign}
\label{eq: before using PL}
\dot{V}(t)
	&= \frac{a-\alpha\beta -1}{\beta }V(t)
		- \frac{(a-\alpha\beta -1)a}{\beta }(F(x(t))-F(\bar{x}))
	\notag\\
	&\qquad - \left(\frac{(a-\alpha\beta -1)\beta}{2} +\beta \right)\norm{\nabla F(x(t))}^{2}
		-\left(\frac{a-\alpha\beta -1}{2\beta}
		+ \alpha \right)\norm{\dot{x}(t)}^{2}
	\notag\\
			&= -RV(t) +aR(F(x(t))-F(\bar{x}))-\frac{1}{2}\left(a+1-\alpha\beta \right)\beta\norm{\nabla F(x(t))}^{2}
			\notag\\
			&\qquad +\frac{1}{2\beta}\left(1-\alpha\beta -a\right)\norm{\dot{x}(t)}^{2}
	\end{flalign}
	where in the second equality we set $R = \frac{1}{\beta}\left(\alpha\beta+1-a\right)$.
	
	Since the function $F$ satisfies condition \hyperref[PL]{$\Lb(2)$}, if $R = \frac{1}{\beta}\left(\alpha\beta+1-a\right)\geq 0$, from \eqref{eq: before using PL}, it follows
	\begin{equation}\label{before conditions for exp V}
		\begin{aligned}
			\dot{V}(t) & \leq -R V(t) +\frac{1}{2\mu}\left(aR - (a+1-\alpha\beta )\mu\beta\right)\norm{\nabla F(x(t))}^{2} +\frac{1}{2\beta}\left(1-\alpha\beta -a\right)\norm{\dot{x}(t)}^{2} 
		\end{aligned}
	\end{equation}
	
	From relation \eqref{before conditions for exp V}, we note that if the algebraic expressions multiplying $\norm{\nabla F(x(t))}^{2}$ and $\norm{\dot{x}(t)}^{2}$ are non-positive, it follows that $\dot{V}(t)\leq -RV(t)$ with $R=\frac{1}{\beta}\left(\alpha\beta+1-a\right)$, which (by Gronw\"all lemma, see Lemma \ref{lemmagronwallcontinuous} in Appendix \ref{app:aux}), leads to an exponential decay for the energy $V$, with a geometric rate of order $R$, i.e. $V(t) \leq V(t_{0})e^{-R(t-t_{0})}$.
	
	The overall imposing conditions are the following:
	\begin{equation}\label{arxiko condition}
		\begin{cases}
			R = \frac{1}{\beta}\left(\alpha\beta +1 -a\right) \geq0 \\
			aR \leq \left(a+1-\alpha\beta \right)\mu\beta  \\
			1-\alpha\beta\leq a
		\end{cases}
	\end{equation}  
	A straightforward computation, shows that relation \eqref{arxiko condition} is equivalent to the following:
	\begin{equation}\label{finalconditionsforlemma1}
		\begin{cases}
			\left(\alpha\beta +1 -a - \mu\beta^{2}\right)a \leq (1-\alpha\beta)\mu\beta^{2} \\
			1-\alpha\beta \leq a \leq 1+\alpha\beta
		\end{cases}
	\end{equation}
	Therefore, if condition \eqref{finalconditionsforlemma1} holds true, then \eqref{decay of V} follows. 
	Lastly, since $b=\beta$ and $c=\beta^{2}$, from the definition of $V$ in \eqref{Lyapunov def}, it holds:
	\begin{equation}
		a(F(x(t))-F(\bar{x})) \leq V(t)=a(F(x(t))-F(\bar{x})) +\frac{1}{2}\norm{\beta\nabla F(x(t))+\dot{x}(t)}^{2} \leq V(t_{0})e^{-R(t-t_{0})}
	\end{equation}
	which allows to conclude the first part of the proof of Lemma \ref{lemma general conditions}.
	
	Finally, if we additionally assume that the second inequality in \eqref{finalconditionsforlemma1} is strict, (i.e. if $\varepsilon(a,\alpha,\beta)=a^2 - \left((\alpha-\mu\beta)\beta +1\right)a + (1-\alpha\beta)\mu\beta^{2}>0$), then from \eqref{finalconditionsforlemma1}, it follows
	\begin{equation}
		\dot{V}(t) \leq -R V(t) -\frac{\varepsilon(a,\alpha,\beta)}{2\mu\beta}\norm{\nabla F(x(t))}^{2}
	\end{equation}
	By using again Gronw\"all's lemma  (Lemma \ref{lemmagronwallcontinuous} in Appendix \ref{app:aux}), for all $t\geq t_{0}$, it follows that
	\begin{equation}
		\frac{\varepsilon(a,\alpha,\beta)}{2\mu\beta}\int_{t_{0}}^{t}e^{Rs}\norm{\nabla F(x(s))}^{2}ds \leq e^{Rt_{0}}V(t_{0})-e^{Rt}V(t)
	\end{equation}
	which concludes the proof of Lemma \ref{lemma general conditions}.
\end{proof}

\begin{proof}[\textbf{Proof of Lemma \ref{lemma lyapunov2}}]
	From the statement of Lemma \ref{lemma lyapunov2}, a direct observation shows that the  first condition of \eqref{conditions on a/d} in Lemma \ref{lemma general conditions} (i.e. $\max\{0,1-\alpha\beta\}\leq a\leq 1+\alpha\beta$) is satisfied.
	
	Moreover, the second condition in \eqref{conditions on a/d} is equivalent to the following inequality constraint:
	\begin{equation}\label{P2aconditioninproof}
		P_{2}(a):=a^{2} - \left((\alpha-\mu\beta)\beta+1\right)a +(1-\alpha\beta)\mu\beta^{2}\geq 0
	\end{equation}
	
	From Lemma \ref{lemma binomial} in Appendix \ref{app:aux}, it follows that  $P_{2}(a) \geq 0$  if and only if the following holds:
	\begin{equation}\label{conditionsquadraticintheproof}
		\begin{cases}
			\left\{\alpha \leq 2(\sqrt{2}-1)\sqrt{\mu}\right\}\&\left\{\beta\in [\beta_{-},\beta_{+}]\right\} \\
			\text{ OR } \\
			\left\{\alpha \leq 2(\sqrt{2}-1)\sqrt{\mu}\right\}\&\left\{\beta\in (0,\beta_{-}]\cup[\beta_{+},+\infty)\right\}\&\left\{a\in (0,y_{-}]\cup[\max\{0,y_{+}\},+\infty)\right\} \\
			\text{ OR } \\
			\left\{\alpha > 2(\sqrt{2}-1)\sqrt{\mu}\right\}\&\left\{a\in (0,y_{-}]\cup[\max\{0,y_{+}\},+\infty)\right\}
		\end{cases}
	\end{equation}
	where \begin{equation}
		\beta_{\pm}=\frac{1}{2\mu}\left(2\sqrt{2\mu}-\alpha \pm \sqrt{\left(2\sqrt{2\mu}-\alpha\right)^{2} - 4\mu}\right)
	\end{equation} 
	\begin{equation}
	\text{and } \qquad \qquad	y_{\pm}=\frac{1}{2}\left((\alpha-\mu\beta)\beta+1 \pm \sqrt{\left((\alpha-\mu\beta)\beta+1\right)^{2}-4(1-\alpha\beta)\mu\beta^{2}}\right).
	\end{equation}
	Therefore the condition \eqref{conditionsquadraticintheproof} together with $\max\{0,1-\alpha\beta\}\leq a\leq 1+\alpha\beta$, show that $a$ satisfy \eqref{conditions on a/d} in Lemma \ref{lemma general conditions} and allows to conclude the proof.
\end{proof}

\begin{proof}[\textbf{Proof of Lemma \ref{lemma lyapunov3}}]
	
	In order to prove the statement of Lemma \ref{lemma lyapunov3}, we start by an \textit{ad hoc} examination of all the possible cases in Lemma \ref{lemma lyapunov2} for the friction parameters $\alpha$ and $\beta$ and the Lyapunov parameter $a$. From the expression of $R(a,\alpha,\beta) = \frac{1}{\beta}\left(1+\alpha\beta-a\right)$, the value of $a$, that maximizes $R(a,\alpha,\beta)$, is equivalent of choosing the minimal feasible $a$, i.e. satisfying the conditions of Lemma \ref{lemma lyapunov2}.
	
	Thus, without loss of generality, we divide the set of all feasible couples of parameters $\Fb = \{(\alpha,\beta)\in \R_{+}^{2} ~ : ~ a \text{ satisfies Lemma } \ref{lemma lyapunov2}  \}$, into the following sets:
	\begin{align}
		\Fb_{-} &= \{(\alpha,\beta)\in\Fb  ~ : ~ \underset{}{\inf} a = 1-\alpha\beta\}  \label{def Fbmin} \\
		\Fb_{+} &= \{(\alpha,\beta)\in\Fb  ~ : ~ \underset{}{\inf} a = y_{+}\} \label{def Fbplus}
	\end{align}

	We now examine the various cases in Lemma \ref{lemma lyapunov2}:
	\begin{itemize}[wide=0pt]
		\item If $0<\alpha\leq 2(\sqrt{2}-1)\sqrt{\mu}$, then:	
		\begin{itemize}[wide=5pt]
			\item[\textbf{-}] If $\beta \in [\beta_{1},\beta_{2}]$, then, since $\frac{1}{\alpha} \geq \beta_{2} \geq \beta_{1}$, it follows that condition \eqref{condition on y 1} holds true for all $0\leq 1-\alpha\beta\leq a\leq 1+\alpha\beta$.

%
%
%
			
			\item[\textbf{-}] If $\beta\in (0,\beta_{1}]\cup[\beta_{2},+\infty)$,  then the condition in the first line of \eqref{condition on y 2} (i.e. $\max\{0,1-\alpha\beta\} \leq a \leq y_{-}$), is compatible (non-void), if-f the following set 
			\begin{equation}
				C^{1}_{-} := \{(\alpha,\beta)\in \R_{+}^{2}~:~\alpha\leq 2(\sqrt{2}-1)\sqrt\mu~,~ \beta\in (0,\beta_{1}]\cup[\beta_{2},+\infty) ~,~ \max\{0,1-\alpha\beta\} \leq y_{-} \}		
			\end{equation} is not empty. Next, we give a precise characterization of $C^{1}_{-}$:
			
			On the one hand, we have
			\begin{equation}\label{yminpositive1}
				\begin{aligned}
					y_{-}\geq 0 &\Leftrightarrow		\sqrt{\left((\alpha-\mu\beta)\beta+1\right)^2 - 4(1-\alpha\beta)\mu\beta^2} \leq -\mu\beta^{2}+\alpha\beta+1	\\
					& \Leftrightarrow 
					\beta \in \bigg(0,\min\left\{\beta_{0},\frac{1}{\alpha}\right\}\bigg]
				\end{aligned}
			\end{equation}
			where $\beta_{0}$ is the sole positive root of the quadratic function $\mu\beta^{2}-\alpha\beta-1$, i.e.  $\beta_{0}=\frac{1}{2\mu}\left(\alpha+\sqrt{\alpha^{2}+4\mu}\right)$.
			
			On the other hand, the following equivalence holds:
			\begin{equation}\label{ymincomp2}
				\begin{aligned}
					y_{-} \geq 1-\alpha\beta &\Leftrightarrow		\sqrt{\left((\alpha-\mu\beta)\beta+1\right)^2 - 4(1-\alpha\beta)\mu\beta^2} \leq -\mu\beta^{2}+3\alpha\beta-1	\\
					& \Leftrightarrow 
					\left\{ 8\beta(1-\alpha\beta)(\mu\beta-\alpha)\geq 0  \right\} \& \{\mu\beta^{2}-3\alpha\beta+1\leq 0\} \\
					& \Leftrightarrow \begin{cases}
						& \left\{\alpha\beta \leq 1\right\}\&\left\{\beta\geq    \frac{\alpha}{\mu}\right\}\&\left\{\alpha\geq  \frac{2}{3}\sqrt{\mu}\right\}\&\left\{\beta\in  [\beta_{3},\beta_{4}]\right\} \\
						\text{OR } &\left\{\alpha\beta \geq 1\right\}\&\left\{\beta\leq \frac{\alpha}{\mu}\right\}\&\left\{\alpha\geq \frac{2}{3}\sqrt{\mu}\right\}\&\left\{\beta\in [\beta_{3},\beta_{4}]\right\} 
					\end{cases}
				\end{aligned}
			\end{equation}
			where $\beta_{3}$, $\beta_{4}$ are the two non-negative roots of the quadratic function $\mu\beta^{2}-3\alpha\beta+1$ (when they exist, i.e. $\alpha>\frac{2}{3}\sqrt{\mu}$), i.e.:
			\begin{equation}\label{beta34proof}
				\beta_{3}=\frac{1}{2\mu}\left(3\alpha-\sqrt{9\alpha^{2}-4\mu}\right) ~ \text{ and } ~ \beta_{4}=\frac{1}{2\mu}\left(3\alpha+\sqrt{9\alpha^{2}-4\mu}\right)
			\end{equation}	
			
			From \eqref{yminpositive1} and \eqref{ymincomp2} it follows that
			\begin{equation}\label{compatibility general1}
				\begin{aligned}
					y_{-}\geq \max\{0,1-\alpha\beta\} & \Leftrightarrow \left\{ \frac{2}{3}\sqrt{\mu}\leq \alpha\right\}\&\left\{\beta\in\left[\max\left\{\frac{\alpha}{\mu},\beta_{3}\right\},\min\left\{\frac{1}{\alpha},\beta_{0},\beta_{4}\right\}\right]\right\} \\
					& \Leftrightarrow \left\{\frac{2}{3}\sqrt{\mu}\leq \alpha\leq \sqrt{\mu}\right\}\&\left\{\beta\in\left[\max\left\{\frac{\alpha}{\mu},\beta_{3}\right\},\min\left\{\frac{1}{\alpha},\beta_{4}\right\}\right]\right\}
				\end{aligned}
			\end{equation}
			Since $\alpha\leq 2(\sqrt{2}-1)\sqrt{\mu}$ and $\beta\in (0,\beta_{1}]\cup[\beta_{2},+\infty)$, from \eqref{compatibility general1}, it follows that
			\begin{equation}\label{compatibility case1}
				\begin{aligned}
					C_{-}^{1} &= \begin{cases}
						& \left\{\frac{2}{3}\sqrt{\mu}\leq \alpha\leq 2(\sqrt{2}-1)\sqrt{\mu}\right\}\&\left\{\beta\in \left[\max\left\{\frac{\alpha}{\mu},\beta_{3}\right\},\min\left\{\frac{1}{\alpha},\beta_{1},\beta_{4}\right\}\right]\right\}\\
						\text{OR} & \left\{\frac{2}{3}\sqrt{\mu}\leq \alpha \leq 2(\sqrt{2}-1)\sqrt{\mu}\right\} \&\left\{\left[\max\left\{\frac{\alpha}{\mu},\beta_{2},\beta_{3}\right\},\min\left\{\frac{1}{\alpha},\beta_{4}\right\}\right]\right\}			
					\end{cases} \\ 
					&=\left\{\sqrt{\frac{\mu}{2}} \leq \alpha \leq 2(\sqrt{2}-1)\sqrt{\mu}\right\} \& \left\{\beta\in \left[ \frac{\alpha}{\mu},\beta_{1}\right]\cup\left[\beta_{2},\frac{1}{\alpha}\right]\right\}		
				\end{aligned}
			\end{equation}
			Thus it follows, that if $(\alpha,\beta) \in C_{-}^{1}$, condition \eqref{condition on y 2} in Lemma \ref{lemma lyapunov2} holds true for all $0\leq 1-\alpha\beta \leq a \leq y_{-}$.
			
			Concerning the compatibility of the condition in the second line of \eqref{condition on y 2} in Lemma \ref{lemma lyapunov2} (i.e. $\max\{0,1-\alpha\beta,y_{+}\} \leq a \leq 1+\alpha$), a direct comparison shows that $\max\{0,1-\alpha\beta,y_{+}\}<1+\alpha\beta$, for all $(\alpha,\beta)\in \R^{2}_{+}$. 
			
			Next, we study the quantity $\max\{0,1-\alpha\beta,y_{+}\}$ appearing in \eqref{condition on y 2} and \eqref{condition on y 3} in Lemma \ref{lemma lyapunov2}. We start by defining the following conditions (sets) on the set of all possible couples $(\alpha,\beta)\in \R_{+}^{2}$.
			\begin{equation}\label{H1plus}
				\begin{aligned}
					H_{+}^{1}  =\left\{(\alpha,\beta)\in \R_{+}^{2}~:~ \alpha\leq 2(\sqrt{2}-1)\sqrt{\mu} ~,~ \beta\in (0,\beta_{1})\cup(\beta_{2},+\infty) ~,~ y_{+}\geq \max\{0,1-\alpha\beta\}\right\}	
				\end{aligned}
			\end{equation}
			\begin{equation}\label{H1min}
				H_{-}^{1} =  \left\{(\alpha,\beta)\in \R_{+}^{2}~:~ \alpha\leq 2(\sqrt{2}-1)\sqrt{\mu} ~,~ \beta\in (0,\beta_{1})\cup(\beta_{2},+\infty) ~,~ 1-\alpha\beta\geq \max\{0,y_{+}\}\right\}
			\end{equation}
			
			Regarding condition $H_{+}^{1}$, on the one hand it holds:
			\begin{equation}\label{ypositive1}
				\begin{aligned}
					y_{+}\geq 0 &\Leftrightarrow		\sqrt{\left((\alpha-\mu\beta)\beta+1\right)^2 - 4(1-\alpha\beta)\mu\beta^2} \geq \mu\beta^{2}-\alpha\beta-1	\\
					& \Leftrightarrow 
					\beta \in (0,\beta_{0}]\cup \left(\max\left\{\beta_{0},\frac{1}{\alpha}\right\},+\infty\right) 
				\end{aligned}
			\end{equation}
			where $\beta_{0}=\frac{1}{2\mu}\left(\alpha+\sqrt{\alpha^{2}+4\mu}\right)$. On the other hand, a straightforward computation yields:
			\begin{equation}\label{ygeq1minab}
				\begin{aligned}
					y_{+}  \geq 1-\alpha\beta & \Leftrightarrow		\sqrt{\left((\alpha-\mu\beta)\beta+1\right)^2 - 4(1-\alpha\beta)\mu\beta^2} \geq \mu\beta^{2}-3\alpha\beta+1	\\
					& \Leftrightarrow \begin{cases}
						& \left\{(\alpha,\beta)\in\R_{+}^{2} ~:~  8\beta(1-\alpha\beta)(\mu\beta-\alpha)\leq 0\right\} ~ , ~ \text{ if } \mu\beta^{2}-3\alpha\beta +1 \geq 0 \\
						\text{OR }	 &(\alpha,\beta)\in \R_{+}^{2} ~ ~ , \hspace{4.3cm}  \text{ if } \mu\beta^{2}-3\alpha\beta +1 < 0
					\end{cases}\\
					& \Leftrightarrow\begin{cases}
						& \left\{\alpha\leq \frac{2}{3}\sqrt{\mu}\right\}\&\left\{\beta\leq \min\left\{\frac{1}{\alpha}, \frac{\alpha}{\mu}\right\}\right\} \\
						\text{OR } &
						\{\alpha > \frac{2}{3}\sqrt{\mu}\}\&\left\{\beta\leq \min\left\{\frac{1}{\alpha}, \frac{\alpha}{\mu}\right\}\right\} \& \left\{\beta\in (0,\beta_{3}]\cup[\beta_{4},+\infty)\right\}   \\
						\text{OR } &
						\{\alpha \leq \frac{2}{3}\sqrt{\mu}\}\&\left\{\beta\geq \max\left\{\frac{1}{\alpha}, \frac{\alpha}{\mu}\right\}\right\}   \\
						\text{OR } &
						\{\alpha > \frac{2}{3}\sqrt{\mu}\}\&\left\{\beta\geq \max\left\{\frac{1}{\alpha}, \frac{\alpha}{\mu}\right\}\right\} \& \left\{\beta\in (0,\beta_{3}]\cup[\beta_{4},+\infty)\right\}   \\
						\text{OR } &
						\{\alpha > \frac{2}{3}\sqrt{\mu}\}\&\left\{\beta\in [\beta_{3},\beta_{4}]\right\} 
					\end{cases}
				\end{aligned}
			\end{equation}
			where $\beta_{3}$, $\beta_{4}$ are the two roots of the quadratic function $\mu\beta^{2}-3\alpha\beta+1$ (when they exist, i.e. $\alpha>\frac{2}{3}\sqrt{\mu}$):
			\begin{equation}
				\beta_{3}=\frac{1}{2\mu}\left(3\alpha-\sqrt{9\alpha^{2}-4\mu}\right) ~ \text{ and } ~ \beta_{4}=\frac{1}{2\mu}\left(3\alpha+\sqrt{9\alpha^{2}-4\mu}\right)
			\end{equation}	
			
			By relations \eqref{ypositive1} and $\eqref{ygeq1minab}$, it follows that $y_{+}\geq \max\{0,1-\alpha\beta\}$, if and only if:
			\begin{equation}\label{ygeq1minab2}
				\begin{aligned}
					\begin{cases}
						& \left\{\alpha\leq \frac{2}{3}\sqrt{\mu}\right\}\&\left\{\beta \in (0, \min\left\{\frac{1}{\alpha}, \frac{\alpha}{\mu},\beta_{0}\right\}\big]\right\} \\
						\text{OR } &
						\{\alpha > \frac{2}{3}\sqrt{\mu}\} \& \left\{\beta\in (0,\min\left\{\frac{1}{\alpha}, \frac{\alpha}{\mu},\beta_{0},\beta_{3}\right\}\big]\cup\left[\beta_{4},\min\left\{\frac{1}{\alpha}, \frac{\alpha}{\mu},\beta_{0}\right\}\right]\right\}   \\
						\text{OR } &
						\{\alpha \leq \frac{2}{3}\sqrt{\mu}\}\&\left\{\beta \in  \big(\max\left\{\frac{1}{\alpha},\frac{\alpha}{\mu}\right\},\beta_{0})\cup \big[\max\left\{\frac{1}{\alpha},\frac{\alpha}{\mu},\beta_{0}\right\},+\infty) \right\}   \\
						\text{OR } &
						\{\alpha > \frac{2}{3}\sqrt{\mu}\} \& \left\{\beta\in \left[\max\left\{\frac{1}{\alpha}, \frac{\alpha}{\mu}\right\},\min\{\beta_{0},\beta_{3}\}\right]\cup[\max\left\{\frac{1}{\alpha}, \frac{\alpha}{\mu},\beta_{4}\right\},\beta_{0})\right\}   \\
						\text{OR } &
						\{\alpha > \frac{2}{3}\sqrt{\mu}\} \& \left\{\beta\in \left[\max\left\{\frac{1}{\alpha}, \frac{\alpha}{\mu},\beta_{0}\right\},\beta_{3}\right]\cup[\max\left\{\frac{1}{\alpha}, \frac{\alpha}{\mu},\beta_{0},\beta_{4}\right\},+\infty)\right\}   \\
						\text{OR } &
						\{\alpha > \frac{2}{3}\sqrt{\mu}\}\&\left\{\beta\in [\beta_{3},\min\{\beta_{0},\beta_{4}\}]\right\}   \\
						\text{OR } &
						\{\alpha > \frac{2}{3}\sqrt{\mu}\}\&\left\{\beta\in [\max\left\{\frac{1}{\alpha},\beta_{0},\beta_{3}\right\},\beta_{4}]\right\} 
					\end{cases}
				\end{aligned}
			\end{equation}
			An \textit{ad hoc} examination of all the possible cases in \eqref{ygeq1minab2}, yields:
			\begin{equation}\label{ygeq1minab3}
				\begin{aligned}
					y_{+}\geq \max\{0,1-\alpha\beta\} &\Leftrightarrow\begin{cases}
						& \left\{\alpha\leq \frac{2}{3}\sqrt{\mu}\right\}\&\left\{\beta \in (0,\frac{\alpha}{\mu}\big]\right\} \\
						\text{OR } &
						\{\alpha > \frac{2}{3}\sqrt{\mu}\} \& \left\{\beta\in (0,\min\left\{ \frac{\alpha}{\mu},\beta_{3}\right\}\big]\right\}   \\
						\text{OR } &
						\{\alpha \leq \frac{2}{3}\sqrt{\mu}\}\&\left\{\beta \in  \big[\frac{1}{\alpha},+\infty) \right\}     \\
						\text{OR } &
						\{\alpha > \frac{2}{3}\sqrt{\mu}\} \& \left\{\beta\in \big[\max\left\{\frac{1}{\alpha},\beta_{4}\right\},+\infty)\right\}   \\
						\text{OR } &
						\{\alpha > \frac{2}{3}\sqrt{\mu}\}\&\left\{\beta\in [\beta_{3},\min\{\beta_{0},\beta_{4}\}]\right\}   \\
						\text{OR } &
						\{\alpha \geq \sqrt{\frac{\mu}{2}}\}\&\left\{\beta\in [\beta_{0},\beta_{4}]\right\} 
					\end{cases} \\
					& \Leftrightarrow\begin{cases}
						& \left\{\alpha\leq \frac{2}{3}\sqrt{\mu}\right\}\&\left\{\beta \in (0,\frac{\alpha}{\mu}\big]\cup\big[\frac{1}{\alpha},+\infty)\right\} \\
						\text{OR } &
						\left\{ \frac{2}{3}\sqrt{\mu} < \alpha <\sqrt{\frac{\mu}{2}}\right\} \& \left\{\beta\in (0, \frac{\alpha}{\mu}\big]\cup\big[\beta_{3},\beta_{4}\big]\cup\big[\frac{1}{\alpha},+\infty) \right\} \\
						\text{OR } &
						\{\alpha \geq \sqrt{\frac{\mu}{2}}\}\&\left\{\beta>0\right\} 
					\end{cases}
				\end{aligned}
			\end{equation}
			
			Since $\left\{\alpha\leq 2(\sqrt{2}-1)\sqrt{\mu}\right\}\&\left\{\beta\in (0,\beta_{1})\cup(\beta_{2},+\infty)\right\}$, from \eqref{ygeq1minab3} and the definition of $H_{+}^{1}$ in \eqref{H1plus}, it follows:
			\begin{equation}\label{H1plusfinal}
				\begin{aligned}
					H_{+}^{1} &=\begin{cases}
						& \left\{\alpha\leq \frac{2}{3}\sqrt{\mu}\right\}\&\left\{\beta \in (0,\min\left\{\frac{\alpha}{\mu},\beta_{1}\right\}\big]\cup\left[\frac{1}{\alpha},\beta_{1}\right]\cup\left[\beta_{2},\frac{\alpha}{\mu}\right]\cup\big[\max\left\{\beta_{2},\frac{1}{\alpha}\right\},+\infty)\right\} \\
						\text{OR } &
						\left\{ \frac{2}{3}\sqrt{\mu} < \alpha <\sqrt{\frac{\mu}{2}}\right\} \& \left\{\beta\in (0, \min\left\{\frac{\alpha}{\mu},\beta_{1}\right\}\big]\cup\big[\beta_{3},\min\left\{\beta_{1},\beta_{4}\right\}\big]\cup\big[\frac{1}{\alpha},\beta_{1}\big] \right\} \\
						\text{OR } &   \left\{ \frac{2}{3}\sqrt{\mu} < \alpha <\sqrt{\frac{\mu}{2}}\right\} \& \left\{\beta\in (\beta_{2}, \frac{\alpha}{\mu}\big]\cup\big[\max\{\beta_{2},\beta_{3}\},\beta_{4}\big]\cup\big[\max\left\{\beta_{2},\frac{1}{\alpha}\right\},+\infty) \right\}                                       \\
						\text{OR } & \left\{\sqrt{\frac{\mu}{2}}\leq \alpha \leq  2(\sqrt{2}-1)\sqrt{\mu}\right\}\&\left\{\beta\in(0,\beta_{1})\cup(\beta_{2}+\infty)\right\} 
					\end{cases} \\
					&= \begin{cases}
						& \left\{  \alpha <\sqrt{\frac{\mu}{2}}\right\} \& \left\{\beta\in (0, \frac{\alpha}{\mu}\big]\cup \big[\frac{1}{\alpha},+\infty) \right\} \\
						\text{OR } & \left\{\sqrt{\frac{\mu}{2}}\leq \alpha \leq  2(\sqrt{2}-1)\sqrt{\mu}\right\}\&\left\{\beta\in(0,\beta_{1})\cup(\beta_{2}+\infty)\right\} 
					\end{cases}
				\end{aligned}
			\end{equation}
			Therefore, if $(\alpha,\beta)\in H_{+}^{1}$, then condition \eqref{condition on y 2} in Lemma \ref{lemma lyapunov2} holds true for all $y_{+}\leq a\leq 1+\alpha\beta$.

			Analogously, concerning the set $H_{-}^{1}$ and the condition $1-\alpha\beta \geq \max\{0,y_{+}\}$, we have
			\begin{equation}\label{minabgeqy1}
				\begin{aligned}
					1-\alpha\beta \geq \max\{0,y_{+}\} &= \begin{cases}
						& \left\{\alpha\leq \frac{2}{3}\sqrt{\mu}\right\}\& \left\{\beta\in \left[\frac{\alpha}{\mu},\frac{1}{\alpha}\right]\right\} \\ 
						\text{OR } & \left\{\frac{2}{3}\sqrt{\mu}<\alpha \leq \sqrt{\mu} \right\}\& \left\{\beta\in \left[\frac{\alpha}{\mu},\min\left\{\frac{1}{\alpha},\beta_{3}\right\}\right]\cup \left[\max\left\{\frac{\alpha}{\mu},\beta_{4}\right\},\frac{1}{\alpha}\right]\right\} \\
					\end{cases} \\
					&=
					\begin{cases}
						& \left\{\alpha\leq \frac{2}{3}\sqrt{\mu}\right\}\& \left\{\beta\in \left[\frac{\alpha}{\mu},\frac{1}{\alpha}\right]\right\} \\ 
						\text{OR } & \left\{\frac{2}{3}\sqrt{\mu}<\alpha \leq \sqrt{\frac{\mu}{2}} \right\}\& \left\{\beta\in \left[\frac{\alpha}{\mu},\beta_{3}\right]\cup \left[\beta_{4},\frac{1}{\alpha}\right]\right\} \\
					\end{cases}
				\end{aligned}
			\end{equation}	
			Thus, by using \eqref{minabgeqy1}, $H_{-}^{1}$ can be expressed as follows:
			\begin{flalign}\label{H1minfinal}
					H_{-}^{1} &= 
					\begin{cases}
						& \left\{\alpha\leq \frac{2}{3}\sqrt{\mu}\right\}\& \left\{\beta\in \left[\frac{\alpha}{\mu},\min\left\{\frac{1}{\alpha},\beta_{1}\right\}\right]\cup\left[\max\left\{\frac{\alpha}{\mu},\beta_{2}\right\},\frac{1}{\alpha}\right]\right\} \\ 
						\text{OR } & \left\{\frac{2}{3}\sqrt{\mu}<\alpha \leq \sqrt{\frac{\mu}{2}} \right\}\& \left\{\beta\in \left[\frac{\alpha}{\mu},\min\{\beta_{1},\beta_{3}\}\right]\cup \left[\beta_{4},\min\{\beta_{1},\frac{1}{\alpha}\}\right]\right\} \\
						\text{OR } & \left\{\frac{2}{3}\sqrt{\mu}<\alpha \leq \sqrt{\frac{\mu}{2}} \right\}\& \left\{\beta\in \left[\max\left\{\frac{\alpha}{\mu},\beta_{2}\right\},\beta_{3}\right]\cup \left[\max\{\beta_{2},\beta_{4}\},\frac{1}{\alpha}\right]\right\}
					\end{cases}
					\notag\\
					&= 
					\left\{\alpha \leq \sqrt{\frac{\mu}{2}} \right\}\& \left\{\beta\in \left[\frac{\alpha}{\mu},\beta_{1}\right]\cup\left[\beta_{2},\frac{1}{\alpha}\right]\right\} 
			\end{flalign}
			which shows that  if $(\alpha,\beta)\in H_{-}^{1}$, then condition \eqref{condition on y 2} in Lemma \ref{lemma lyapunov2} holds true for all $0\leq 1-\alpha\beta\leq a\leq 1+\alpha\beta$.
		\end{itemize}
		
		\item If $\alpha>2(\sqrt{2}-1)\sqrt{\mu}$, then:
		
		Following the previous case,  the condition in the first line of \eqref{condition on y 3} (i.e. $\max\{0,1-\alpha\beta\} \leq a \leq y_{-}$), is compatible, if and only if the following set 
		\begin{equation}
			C^{2}_{-} := \{(\alpha,\beta)\in \R_{+}^{2}~:~\alpha> 2(\sqrt{2}-1)\sqrt\mu ~,~ \max\{0,1-\alpha\beta\} \leq y_{-} \}		
		\end{equation} is not empty.
		From \eqref{compatibility general1} and the definition of $C^{2}_{-}$, it follows that:
		\begin{equation}\label{compatibility case 2}
			\begin{aligned}
				C^{2}_{-}&=\left\{2(\sqrt{2}-1)\sqrt{\mu}<\alpha\leq \sqrt{\mu}\right\} \& \left\{\beta\in\left[\max\left\{\frac{\alpha}{\mu},\beta_{3}\right\},\min\left\{\frac{1}{\alpha},\beta_{4}\right\}\right]\right\} \\ &=  \left\{2(\sqrt{2}-1)\sqrt{\mu}<\alpha\leq \sqrt{\mu}\right\} \& \left\{\beta\in \left[ \frac{\alpha}{\mu}, \frac{1}{\alpha} \right]\right\}
			\end{aligned}
		\end{equation}
		Thus, if $(\alpha,\beta) \in C_{-}^{2}$, condition \eqref{condition on y 3} in Lemma \ref{lemma lyapunov2} holds true for all $0\leq 1-\alpha\beta \leq a \leq y_{-}$.
		
		In addition, regarding the quantity $\max\{0,1-\alpha\beta,y_{+}\}$ appearing in \eqref{condition on y 2} and \eqref{condition on y 3} in Lemma \ref{lemma lyapunov2}, in the same way as before, we define the sets:
		\begin{equation}\label{H2plus}
			H_{+}^{2} =  \left\{(\alpha,\beta)\in \R_{+}^{2}~:~ \alpha> 2(\sqrt{2}-1)\sqrt{\mu} ~,~ y_{+}\geq \max\{0,1-\alpha\beta\}\right\}
		\end{equation}
		\begin{equation}\label{H2min}
			H_{-}^{2} =   \left\{(\alpha,\beta)\in \R_{+}^{2}~:~ \alpha> 2(\sqrt{2}-1)\sqrt{\mu} ~,~ 1-\alpha\beta\geq \max\{0,y_{+}\}\right\}
		\end{equation}
		
		Following the previous case, relation  \eqref{ygeq1minab3}, together with $\alpha> 2(\sqrt{2}-1)\sqrt{\mu}$, yields:
		\begin{equation}\label{Hplus2 final}
			H_{+}^{2} = \{\alpha > 2(\sqrt{2}-1)\sqrt{\mu}\}\&\left\{\beta>0\right\} 
		\end{equation}
		while from $\eqref{minabgeqy1}$ and $\alpha> 2(\sqrt{2}-1)\sqrt{\mu}$, it follows that $H_{-}^{2}=\emptyset$.
		Therefore, condition \eqref{condition on y 3} in Lemma \ref{lemma lyapunov2} holds true for all $y_{+}\leq a\leq 1+\alpha\beta$.
	\end{itemize}
    
	Next, we give a precise characterization of the sets $\Fb_{-}$ and $\Fb_{+}$, as defined in \eqref{def Fbmin} and \eqref{def Fbplus}, by grouping all the possible conditions for the couple $(\alpha,\beta)\in \Fb$, as found in the cases above.

	From the characterization of $H_{+}^{1}$ and $H_{+}^{2}$ in \eqref{H1plusfinal} and \eqref{Hplus2 final} (respectively), it follows that:
	\begin{equation}
		\Fb_{+}=H_{+}^{1}\cup H_{+}^{2} = 
		\begin{cases}
			& \left\{  \alpha <\sqrt{\frac{\mu}{2}}\right\} \& \left\{\beta\in (0, \frac{\alpha}{\mu}\big]\cup \big[\frac{1}{\alpha},+\infty) \right\} \\
			\text{OR } & \left\{\sqrt{\frac{\mu}{2}}\leq \alpha \leq  2(\sqrt{2}-1)\sqrt{\mu}\right\}\&\left\{\beta\in(0,\beta_{1})\cup(\beta_{2}+\infty)\right\}  \\
			\text{OR } & \{\alpha > 2(\sqrt{2}-1)\sqrt{\mu}\}\&\left\{\beta>0\right\} 
		\end{cases}
	\end{equation}
	
For sake of simplicity and readability of the main results, instead of the condition $\Fb_{+}$, we will consider the slightly stronger condition $\Fb_{++}$:
\begin{equation}\label{final Fb++}
		\Fb_{++} = \left\{\alpha>0\right\}\&\left\{\beta\in \left(0, \frac{\alpha}{\mu}\right)\cup \left(\frac{1}{\alpha},+\infty\right) \right\}
\end{equation} 
Indeed, since $\frac{\alpha}{\mu} \leq \beta_{1}$ and $\beta_{2}\leq \frac{1}{\alpha}$, for all  $\alpha\in\left[\sqrt{\frac{\mu}{2}},2(\sqrt{2}-1)\sqrt{\mu}\right]$, it follows that $\Fb_{++} \subset \Fb_{+}$.

Analogously, from the expressions of the compatibility conditions $C_{-}^{1}$ and $C_{-}^{2}$ in \eqref{compatibility case1} \eqref{compatibility case 2} (respectively) and the characterization of $H_{-}^{1}$, as also the case \eqref{condition on y 1} in \eqref{H1minfinal},  we obtain:
\begin{equation}\label{final Fbmin}
	\begin{aligned}
		\Fb_{-} &= H_{-}^{1}\cup C_{-}^{1}\cup C_{-}^{2} \cup \left\{\left\{\alpha\leq 2(\sqrt{2}-1)\sqrt\mu\right\}\&\left\{\beta\in[\beta_{1},\beta_{2}]\right\}\right\} \\
		&=\begin{cases}
			& \left\{\alpha \leq \sqrt{\frac{\mu}{2}} \right\}\& \left\{\beta\in \left[\frac{\alpha}{\mu},\beta_{1}\right]\cup\left[\beta_{2},\frac{1}{\alpha}\right]\right\} \\
			\text{OR }	&\left\{\sqrt{\frac{\mu}{2}} \leq \alpha \leq 2(\sqrt{2}-1)\sqrt{\mu}\right\} \& \left\{\beta\in \left[  \frac{\alpha}{\mu},\beta_{1}\right]\cup\left[\beta_{2},\frac{1}{\alpha}\right]\right\} \\
			\text{OR } & \left\{2(\sqrt{2}-1)\sqrt{\mu}< \alpha\leq \sqrt{\mu}\right\} \& \left\{\beta\in \left[ \frac{\alpha}{\mu}, \frac{1}{\alpha} \right]\right\} \\
			\text{OR }	& \left\{\alpha\leq 2(\sqrt{2}-1)\sqrt\mu\right\}\&\left\{\beta\in[\beta_{1},\beta_{2}]\right\}
		\end{cases}		
		\\
		&= \left\{ \alpha\leq \sqrt{\mu}\right\} \& \left\{\beta\in \left[ \frac{\alpha}{\mu}, \frac{1}{\alpha} \right]\right\}
	\end{aligned}
\end{equation}	Finally, from the expressions of $\Fb_{++}$ and $\Fb_{-}$ in  \eqref{final Fb++} and \eqref{final Fbmin} (respectively),  it follows:
\begin{equation}
	\begin{aligned}
		\underset{a\geq0}{\max}R(a,\alpha,\beta) &=\begin{cases}
			2\alpha &~ \text{ if } ~  (\alpha,\beta) \in \Fb_{-}\\
			\frac{1}{2\beta}\left((\alpha+\mu\beta)\beta+1 - \sqrt{\left((\alpha-\mu\beta)\beta+1\right)^{2}-4(1-\alpha\beta)\mu\beta^{2}}\right) &~ \text{ if } ~  (\alpha,\beta) \in \Fb_{++}
		\end{cases}
	\end{aligned}
\end{equation}
\begin{equation}
\text{and } \qquad	\begin{aligned}
		\underset{a\geq 0}{\argmax} R(a,\alpha,\beta)= \begin{cases}
			1-\alpha\beta \qquad \qquad  ~ \text{ if } ~   (\alpha,\beta) \in \Fb_{-} \\
			\frac{1}{2}\left((\alpha-\mu\beta)\beta+1 + \sqrt{\left((\alpha-\mu\beta)\beta+1\right)^{2}-4(1-\alpha\beta)\mu\beta^{2}}\right)  & \\ 
			\qquad \qquad \qquad \quad   ~ \text{ if } ~  (\alpha,\beta) \in \Fb_{++} &
		\end{cases}
	\end{aligned}
\end{equation}
which concludes the proof of Lemma \ref{lemma lyapunov3}.
\end{proof}

\section{Avoidance analysis: proof of lemma \ref{lemma:avoidstrong}}
\label{app:avoidance}


In this appendix we aim to prove a lemma for the avoidance of fixed points with strong escape directions for autonomous ODEs.
Specifically, we aim to provide guarantees for the avoidance of such fixed points under the assumption of convergence, which is implied by the \textit{\L ojasiewicz} for the system \eqref{eq:DIN} by proposition \ref{proposition basic}. 
Since the heart of the argument of proofs of avoidance of saddle points is as far as we know singular \cite{PdM82,castera2023inertial,maulen2024inertial}, we aim to automate that argument, through corollary \ref{cor:avoidstrong}, which we then use to obtain the result in our case. 
Corollary \ref{cor:avoidstrong} is formulated in terms of general $d$-dimensional autonomous ODEs. 
This abstraction significantly simplifies the presentation. 
To this end, consider, as in \eqref{eq:ODE},
\begin{align}
\label{eq:ode}
	\dot{z} &= f(z) 
\end{align}
and let $\phi_t\in\mathbb{R}^{d}\to\mathbb{R}^{d}$ be its time evolution over time $t$.
Under suitable conditions the Center Manifold Theorem describes the dynamics of this system near an equilibrium point, i.e. a point $\bar{z}\in\mathbb{R}^d$ such that $f(\bar{z})=0$. Specifically it maps its time evolution to the time evolution evolution of its linearized version
\begin{align}
\dot{z} &= \nabla f (\bar{z}) (z-\bar{z})
\end{align}
A particularly useful version of the CMT is Carr's Theorem \cite{perko2013differential}, presented below for the reader's convenience. 

\begin{theorem}[Carr's Theorem (Theorem 2 in Section 2.12 of \cite{perko2013differential})]
\label{thrm:Carr}
Suppose $f \in C^{1}(E)$ where $E$ is an open subset of $R^{d}$ containing an equilibrium point $\bar{z}$.
Consider $\nabla f(\bar{z})$ in block diagonal form
\begin{align}
		\nabla f(\bar{z}) \equiv 
		 \begin{pmatrix}
			 C & & \\
			 & S & \\
			 & & U
		 \end{pmatrix}
	\end{align}
	with $C\in\mathbb{R}^{m\times m}$ having $m$ eigenvalues with zero real part, $U\in\mathbb{R}^{n\times n}$ having $n$ eigenvalues with positive real part and $S\in\mathbb{R}^{k\times k}$ having $k$ eigenvalues with negative real part, such that $m+n+k=d$. By a linear transformation taking $\bar{z}$ to $0$, the system can be written in the form 
	\begin{align}
	\dot{z}_c &= Cz_c + F_c(z_c,z_u,z_s)
	\\
	\dot{z}_s &= Sz_s + F_s(z_c,z_u,z_s)
	\\
	\dot{z}_u &= Uz_u + F_u(z_c,z_u,z_s)
	\end{align}
	with each $F$ continuous from $\mathbb{R}^{d}$ to its image and such that $F(0)=0$ and $\nabla F(0)=0$. 
    Notice that any of the integers $m$, $n$, $k$ can be zero and then the respective part above will not be necessary. 
	Then there exist $C^1$ functions $h_1$ and $h_2$ and a neighborhood of $(\bar{x},0)$ where the system (1) is topologically conjugate to 
	\begin{align}
    	\dot{z}_c &= Cz_c + F_c(z_c,h_1(z_c),h_2(z_c))
        \label{eq:linearized_c}
    	\\
    	\dot{z}_s &= Sz_s
        \label{eq:linearized_s}
    	\\
    	\dot{z}_u &= Uz_u 
        \label{eq:linearized_u}
	\end{align}
	near $(0,0,0)$.
\end{theorem}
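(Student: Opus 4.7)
The plan is to establish the theorem in two largely independent stages: first, construct the center manifold as the graph of a $C^{1}$ pair $(h_1, h_2)$; second, produce a topological conjugacy between the full nonlinear flow near $\bar{z}$ and the partially linearized system \eqref{eq:linearized_c}--\eqref{eq:linearized_u}. The hypothesis $f \in C^{1}$ together with the already-stated block decomposition of $\nabla f(\bar z)$ is what makes both stages tractable.

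For the invariant-manifold stage, I would multiply each of the nonlinearities $F_c, F_s, F_u$ by a smooth cutoff $\chi$ supported in a small ball around $0$. Since each $F$ satisfies $F(0)=0$ and $\nabla F(0)=0$, the cutoff-modified perturbations are globally defined with Lipschitz constant as small as desired, while leaving the dynamics unchanged near $0$. The center manifold $W^c = \{(z_c, h_1(z_c), h_2(z_c))\}$ is then characterized by an invariance condition which, via the variation-of-parameters formula for the $z_s$ and $z_u$ components, becomes a functional integral equation for the pair $(h_1, h_2)$. Using that $\|e^{St}\|$ decays for $t\geq 0$, $\|e^{-Ut}\|$ decays for $t\geq 0$, while $\|e^{Ct}\|$ grows at most subexponentially (the spectral gap), one shows this integral operator is a contraction on the Banach space of bounded Lipschitz maps vanishing to first order at the origin, provided the Lipschitz constants of the cutoff nonlinearities are small enough. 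The fixed point is $C^{0}$ and Lipschitz; promoting to $C^{1}$ with $\nabla h_i(0)=0$ follows from a fiber-contraction argument on the jet bundle.

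For the conjugacy stage, with $W^c$ in hand one foliates a neighborhood of $0$ by strong-stable and strong-unstable manifolds through each point of $W^c$ (themselves constructed by analogous contraction arguments, exploiting the $S$ and $U$ blocks). In these adapted coordinates, the transverse directions to $W^c$ are hyperbolic, so a Hartman--Grobman-type construction \emph{applied only in the $z_s$ and $z_u$ directions} (via a Perron-style integral formula or a graph-transform conjugacy) produces a fiberwise homeomorphism linearizing the stable and unstable dynamics to $\dot z_s = Sz_s$ and $\dot z_u = Uz_u$, while the base dynamics on $W^c$ retain the full nonlinear form \eqref{eq:linearized_c}. The main obstacle will be precisely this hybrid conjugacy: classical Hartman--Grobman cannot be invoked globally because of the non-hyperbolic center block, so care is needed to execute the linearization only transverse to $W^c$ and to glue the fiberwise conjugacies into a genuine homeomorphism of a neighborhood of $\bar z$. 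A secondary technical point is the $C^{1}$ regularity of $h_1, h_2$, which is standard but genuinely requires the jet-bundle refinement rather than a plain Banach fixed-point theorem on $C^{0}$.
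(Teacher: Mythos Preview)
The paper does not prove this statement at all: Carr's Theorem is quoted verbatim from Perko's textbook (\cite{perko2013differential}, Theorem~2 in Section~2.12) and is presented, as the paper says, ``for the reader's convenience.'' It is used as a black-box tool to derive Corollary~\ref{cor:avoidstrong} and ultimately Lemma~\ref{lemma:avoidstrong}, but no proof is supplied or even sketched in the paper.

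Your proposal is therefore not comparable to anything in the paper. That said, as a standalone sketch of how one \emph{would} prove the center manifold theorem with the topological conjugacy refinement, your outline is essentially the standard one (cutoff the nonlinearity, Lyapunov--Perron integral equation with contraction on a weighted function space, fiber contraction for $C^1$, then a fiberwise Hartman--Grobman transverse to the center manifold). The identified difficulties---the hybrid conjugacy that must leave the center direction nonlinear, and the $C^1$ upgrade---are the genuine technical points. If your goal was to supply what the paper omits, the sketch is sound in spirit; if your goal was to reproduce the paper's argument, there is nothing to reproduce.
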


From \ref{thrm:Carr} we can get the following Corollary, commonly used (implicitly) in the literature to prove avoidance of saddle points under the assumption of (strong) convergence \cite{castera2023inertial,maulen2024inertial}
\begin{corollary}
\label{cor:avoidstrong}
	Suppose $f \in C^{1}(E)$ where $E$ is an open subset of $R^{d}$ containing an equilibrium point $\bar{z}$ and $\nabla f(\bar{z})$ has a negative eigenvalue. Then there exists a neighborhood of $\bar{z}$, $E_{\bar{z}}$, such that the manifold of points that under time evolution forever remain in $E_{\bar{z}}$ is included in a $(d-1)$-dimensional (topological) submanifold of $\mathbb{R}^{d}$, say $\mathcal{V}_{\bar{z}}$.
\end{corollary}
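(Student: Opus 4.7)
The plan is to apply Carr's Theorem (\cref{thrm:Carr}) at $\bar{z}$ and exploit the unstable direction supplied by the hypothesized eigenvalue of $\nabla f(\bar{z})$ with positive real part, confining the set of forward-trapped orbits to a codimension-one topological submanifold.

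First, I would translate so that $\bar{z}=0$ and invoke \cref{thrm:Carr}, obtaining a neighborhood $E_{\bar{z}}$, a homeomorphism $\Phi$, and a block decomposition of coordinates $(z_c,z_s,z_u)\in\mathbb{R}^{m}\times\mathbb{R}^{k}\times\mathbb{R}^{n}$ for which the flow of \eqref{eq:ode} restricted to $E_{\bar{z}}$ is topologically conjugate via $\Phi$ to the decoupled normal form \eqref{eq:linearized_c}--\eqref{eq:linearized_u}. The hypothesis forces $n=\dim U\geq 1$, and in the conjugated coordinates the unstable block satisfies the linear equation $\dot{z}_{u}=Uz_{u}$. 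A standard fact about matrix exponentials then yields constants $c,\lambda>0$ and a norm adapted to the Jordan form of $U$ such that $\|z_{u}(t)\|\geq c\,e^{\lambda t}\|z_{u}(0)\|$ for every $t\geq 0$.

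Second, the key observation is that any trajectory whose image under $\Phi$ remains in $\Phi(E_{\bar{z}})$ for all $t\geq 0$ must satisfy $z_{u}(0)=0$; otherwise the exponential lower bound above forces $\|z_{u}(t)\|$ to exit the bounded set $\Phi(E_{\bar{z}})$ in finite time, contradicting the trapping assumption. Consequently the set of forward-trapped points in the conjugated coordinates is contained in $\mathcal{W}_{0}:=\{z_{u}=0\}\cap\Phi(E_{\bar{z}})$, a topological submanifold of codimension $n\geq 1$, hence of dimension at most $d-1$. Defining $\mathcal{V}_{\bar{z}}:=\Phi^{-1}(\mathcal{W}_{0})$ and invoking that $\Phi$ is a homeomorphism onto its image yields the required $(d-1)$-dimensional topological submanifold in the original coordinates, which by construction contains every point whose forward trajectory remains in $E_{\bar{z}}$.

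The main (though relatively minor) subtlety is arranging that the neighborhood $E_{\bar{z}}$ produced by Carr's Theorem is small enough for the exponential escape estimate in the conjugated coordinates to actually force trajectories out of $\Phi(E_{\bar{z}})$; since $\Phi$ is only a homeomorphism rather than a diffeomorphism, the standard smooth stable/unstable manifold machinery is unavailable, but the purely topological containment in a lower-dimensional set is preserved by homeomorphisms, which is all that is needed here.
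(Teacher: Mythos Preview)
Your proposal is correct and follows essentially the same route as the paper: invoke Carr's Theorem to obtain a topological conjugacy to the decoupled normal form, use the exponential growth $\|e^{Ut}z_u(0)\|\to\infty$ in the unstable block to force $z_u(0)=0$ for any forward-trapped orbit, and pull back $\{z_u=0\}$ through the conjugating homeomorphism to get the codimension-$n$ topological submanifold $\mathcal{V}_{\bar z}$. Your formulation of the growth estimate with an adapted norm and a constant $c>0$ is in fact slightly more careful than the paper's (which writes $\|e^{Ut}v\|\geq e^{\lambda t}\|v\|$ directly), and your remark on shrinking $E_{\bar z}$ to be bounded matches the paper's parenthetical ``(bounded)''.
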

\begin{proof}
    Let $\bar{\phi}_t \in \mathbb{R}^d\to\mathbb{R}^d$ be the time evolution jointly under \eqref{eq:linearized_c}, \eqref{eq:linearized_s} and \eqref{eq:linearized_u}.
    By \ref{thrm:Carr} there is a (bounded) neighborhood of $\bar{z}$, $E_{\bar{z}}$, and a homeomorphism $q$ from $E_{\bar{z}}$ to $q(E_{\bar{z}})$ such that $q(\bar{z})=0$ and as long as the trajectory $\phi_t(z_0)$ remains in $E$, we have $q(\phi_t(z_0)) = \bar{\phi}_t(q(z_0))$. 
    Let $q_u(z)$ be the projection of $q(z)$ to the last $n$ coordinates and let $\lambda$ be the smallest eigenvalue of $U$. 
    Notice that $U$ exists, since we assumed that $\nabla f (\bar{z})$ has a positive eigenvalue, and by the definition of $U$, $\lambda$ must be positive. 
    Then, using $q_u$ we can see that
    \begin{align}
        \| q(\phi_t(z_0)) \| &\geq \| q_u(\phi_t(z_0)) \| 
        = \| e^{Ut} q_u(z_0) \| 
        \geq e^{\lambda t} \| q_u(z_0) \| 
    \end{align}
    which becomes arbitrarily large for large $t$, as long as $q_u(z_0) \neq 0$. 
    It is then clear that the set of initial conditions $z_0 \in E_{\bar{z}}$ for which the trajectory $\phi_t(z_0)$ remains in $E_{\bar{z}}$ for all $t$ is a subset of $\mathcal{V}_{\bar{z}} = \{z_0 \in E_{\bar{z}} : q_u(z_0) = 0\}$, which is a topological manifold of dimension $d-n \leq d-1$. 
\end{proof}

Define the set of strict saddle points through 
\begin{equation} 
	\mathcal{S} = \{\bar{z}\in\mathbb{R}^{d}: f(\bar{z})=0 \ \& \ \nabla^2 F(\bar{z}) \ \text{has a negative eigenvalue} \}	
\end{equation} 
The set of points that belong to trajectories attracted by strict saddle points is defined as $\mathcal{M}_s=\{z\in\mathbb{R}^d:\lim_t \phi_t(z) \in \mathcal{S}\}$. 

Corollary \ref{cor:avoidstrong} can be used in conjunction with (strong) convergence to provide almost sure avoidance of saddle points

\begin{proof}[\textbf{Proof of Lemma \ref{lemma:avoidstrong}}]
    For each $z \in \mathcal{S}$, Corollary \ref{cor:avoidstrong} provides a neighborhood with desirable properties. These neighborhoods clearly form an open covering of $\mathcal{S}$. By second countability of $\mathbb{R}^d$, we have that there exists a countable sequence of them that is enough to provide a covering of $\mathcal{S}$, say $\{E_n\}_{n=1}^\infty$ with invariant submanifolds $\{\mathcal{V}_n\}_{n=1}^\infty$. Since any initial condition $z_0$ that converges to a point in $\mathcal{S}$ must eventually end up inside $E_n$ for some $n$, it must then eventually end up in $\mathcal{V}_n$, i.e. $z_0$ belongs to $\phi_{-t}\left( \mathcal{V}_n \right)$ for all $t$ large enough. This last set is of measure zero by the local Lipschitz continuity of $\phi_{t}$ (see \cite[Theorem $2.8$]{teschl2012ordinary}). 
	Thus we have
	\begin{align}
	\mathcal{M}_s \subset \cup_{t\in\mathbb{N}} \cup_{n\in\mathbb{N}} \phi_{-t}\left( \mathcal{V}_n \right)
	\end{align}
	and as a subset of the countable union of measure zero sets, $\mathcal{M}_s$ must have measure zero. 
\end{proof}

\normalsize
\bibliographystyle{siam}
\bibliography{bibtex/Bibliography-PM.bib,bibtex/reference2.bib}

\end{document}